\newtheorem{theo}{Theorem}[section]
\newtheorem{lem}{Lemma}[section]
\newtheorem{remark}{Remark}[section]
\newtheorem{col}{Corollary}[section]
\numberwithin{equation}{section}
\newtheorem{thmx}{Theorem}
\renewcommand {\baselinestretch}{1.2}
\newcommand{\lbl}[1]{\label{#1}}
\newcommand{\be}{\begin{equation}}
\newcommand{\ee}{\end{equation}}
\newcommand\bes{\begin{eqnarray}} \newcommand\ees{\end{eqnarray}}
\newcommand{\bess}{\begin{eqnarray*}}
\newcommand{\eess}{\end{eqnarray*}}
\newcommand{\bbbb}{\left\{\begin{aligned}}
\newcommand{\nnnn}{\end{aligned}\right.}
\newcommand{\bea}{\begin{align*}}
\newcommand{\eea}{\end{align*}}
\newcommand\ep{\varepsilon}
\newcommand\kk{\left}
\newcommand\rr{\right}
\newcommand\dd{\displaystyle}
\newcommand\vp{\varphi}
\newcommand\dx{{\rm d}x}
\newcommand\dy{{\rm d}y}
\newcommand\lm{\lambda}
\newcommand\nm{\nonumber}
\newcommand\yy{\infty}
\newcommand\qq{\eqref}
\newcommand\ol{\overline}
\newcommand\ud{\underline}
\newcommand\pp{\partial}
\newcommand\oo{\Omega}
\newcommand\boo{\overline\Omega}
\newcommand\bd{\boldmath}
\newcommand\sss{{\cal S}}
\newcommand\vvv{\vspace{1.5mm}}
\newcommand\rrr{\color{red}}
\def\theequation{\arabic{section}.\arabic{equation}}
\begin{document}\thispagestyle{empty}
\setlength{\abovedisplayskip}{7pt}
\setlength{\belowdisplayskip}{7pt}

\begin{center}{\Large\bf Approximation of the generalized principal eigenvalue of }\\[2mm]
 {\Large\bf cooperative nonlocal dispersal systems and applications}\\[2mm]
Mingxin Wang\footnote{{\sl E-mail}: mxwang@sxu.edu.cn. Mingxin Wang was supported by National Natural Science Foundation of China Grant 12171120.}\\
{\small School of Mathematics and Statistics, Shanxi University, Taiyuan 030006, China}\\
Lei Zhang\footnote{The corresponding author, {\sl E-mail}: zhanglei890512@gmail.com. Lei Zhang was supported by the National Natural Science Foundation of China (12471168, 12171119) and the Fundamental Research Funds for the Central Universities (GK202304029, GK202306003, GK202402004). }\\
{\small School of Mathematics and Statistics, Shaanxi Normal University, Xi'an 710119, China}
\end{center}

\begin{quote}
\noindent{\bf Abstract.} It is well known that, in the study of the dynamical properties of nonlinear evolution system with nonlocal dispersals, the principal eigenvalue of linearized system play an important role. However, due to lack of compactness, in order to obtain the existence of principal eigenvalue, certain additional conditions must be attached to the coefficients. In this paper, we approximate the generalized principal eigenvalue of nonlocal dispersal cooperative and irreducible system, which admits the Collatz-Wielandt characterization, by constructing the monotonic upper and lower control systems with principal eigenvalues; and show that the generalized principal eigenvalue plays the same role as the usual principal eigenvalue.

\noindent{\bf Keywords:} Nonlocal dispersal systems; Generalized principal eigenvalue; Approximation; Upper and lower control systems.

\noindent \textbf{AMS Subject Classification (2020)}: 35R20; 45C05; 45G15; 92D30
\end{quote}

\pagestyle{myheadings}
\section{Introduction}{\setlength\arraycolsep{2pt}
\markboth{\rm$~$ \hfill Approximation of the generalized principal eigenvalue and applications \hfill $~$}{\rm$~$ \hfill M.X. Wang \& L. Zhang\hfill $~$}

The reaction diffusion system effectively describes the proliferation and diffusion of microorganisms, with the eigenvalue in such a system often playing a crucial role in the dynamics analysis. In recent years, nonlocal dispersal, described by integral operators, has been increasingly used to model long-distance diffusion instead of local diffusion. However, the solution mapping of a nonlocal dispersal system loses compactness, which presents a significant challenge in the dynamics analysis.

Let $n\ge 1$ be an integer. We define
 \[\sss=\{1,\cdots,n\},\;\;\;\mathbb{R}_+^n=\big\{u\in\mathbb{R}^n:\,u_i\ge 0,\,\,\forall\, i\in\sss\big\}.\]
Given functions $J_1(x,y),\cdots, J_n(x,y)$. Assume that they satisfy the following condition:
 \begin{itemize}\vspace{-2mm}
	\item[{\bf(J)}] $J_i(x,y)\ge 0$ is a continuous function of $(x,y)\in\mathbb{R}^{2N}$, and
 \[J_i(x,x)>0,\;\;\int_{\mathbb{R}^N}J_i(x,y)\dy=1,\;\;\forall\;
 x\in\mathbb{R}^N,\;i\in\sss.\]
\end{itemize}\vspace{-2mm}
Given positive constants $d_1,\cdots,d_n$. We define $d^*_1(x),\cdots,d^*_n(x)$ by the following manner:
\begin{itemize}
	\item[{\bf(D)}] For each $i\in\sss$, either $d^*_i(x)=d_i$ (corresponding to Dirichlet boundary condition), or $d^*_i(x)=d_ij_i(x)$ (corresponding to Neumann boundary condition), where
 \[j_i(x)=\int_{\Omega} J_i(y,x){\rm d}y,\;\;\,x\in\ol\oo.\]
  \end{itemize}\vspace{-2mm}
Let $\oo\subset\mathbb{R}^N$ be a bounded and smooth domain. We consider the initial value problem of cooperative system with nonlocal dispersals
 \bes\left\{\begin{array}{lll}
 u_{it}=d_i\dd\int_\oo J_i(x,y)u_i(y,t)\dy-d^*_i(x)u_i+f_i(x,u),\; &x\in\ol\oo,\;&t>0,\\[3mm]
 u_i(x,0)=u_{i0}(x)\ge0,\,\not\equiv 0, &x\in\boo,\\[1mm]
 i=1,\cdots,n.
 \end{array}\rr.\lbl{1.1}\ees
Its corresponding linearized eigenvalue problem at zero solution is
  \bes\begin{cases}
 d_i\dd\int_\oo J_i(x,y)\phi_i(y)\dy-d^*_i(x)\phi_i+\sum_{k=1}^n \pp_{u_k}f_i(x,0)\phi_k(x)=\lm \phi_i,\; x\in\boo,\\
  i=1,\cdots,n.
  \end{cases}\lbl{1.2}\ees
It is well known that if $\kk(\partial_{u_k}f_i(\tilde x,0)\rr)_{n \times n}$ is irreducible for some $\tilde x\in\oo$, then the corresponding eigenvalue problem with local diffusions must have the principle eigenvalue. However, for the nonlocal dispersal eigenvalue problem \qq{1.2}, to ensure the existence of principle eigenvalue, certain additional conditions are always required due to the lack of compactness. To our knowledge, these additional conditions are generally quite stringent, even for the scalar case.

Several approaches have been explored to establish sufficient conditions for the existence of the principal eigenvalue when the nonlocal dispersal system is strongly order-preserving. The first method involves transforming the problem into perturbations of the generators of positive semigroups, as proposed by B\"{u}rger (\cite{B1988MZ}). Shen and her collaborators applied this method to establish the principal eigenvalue theory for scalar autonomous and periodic equations and systems (see \cite{SZ10JDE, SX15, BS17}). The second method is based on the generalized Krein-Rutmann theorem (\cite{EPS72,N1981FPT}), which shows that the principal eigenvalue of a bounded positive operator exists if there is a gap between its spectral radius and its essential spectral radius. Coville \cite{Cov} utilized this method to present several sufficient conditions for the existence of the principal eigenvalue for a scalar equation. Liang, Zhang and Zhao \cite{LZZ2019JDE} also employed the generalized Krein-Rutmann theorem to investigate the principal eigenvalue problem for a time-periodic cooperative nonlocal dispersal system with time delay.  The third method is the fundamental analysis based on the Collatz-Wielandt characterization, introduced by Li, Coville and Wang \cite{LCWdcds17}. By applying this characterization, Su et al. \cite{SWZ23, SLLW2023JMPA} examined the principal eigenvalue for cooperative systems with nonlocal and coupled dispersal. Recently, there have been some investigations employing perturbations of resolvent positive operators (\cite{Thieme1998DCDS}) to study the principal eigenvalue of nonlocal dispersal equations with age structure (\cite{KangR22}), in a time periodic environment (\cite{FLRX24}) and with time delay (\cite{LZZ2019JDE}). Since the generators of a positive semigroup are resolvent positive operators, this method can be regarded as a generalization of B\"{u}rger's approach.

However, the principal eigenvalue may not exist for nonlocal dispersal equations (see \cite{SZ10JDE}). In such cases, the generalized principal eigenvalue serves as a suitable alternative, fulfilling a similar role to that of the principal eigenvalue.  Berestycki, Coville, and Vo \cite{B-JFA16} define the generalized principal eigenvalue for a scalar nonlocal dispersal operator using the Collatz-Wielandt characterization.

In order to demonstrate our main results, we provide a short review about the scalar equation. The following initial value problem of scalar nonlocal dispersal equation with logistic-type growth
 \bes\begin{cases}
 u_t=d\dd\int_{\Omega}J(x,y) u(y,t){\rm d}y-d^*(x)u
 +u(a(x)-u),\;\;&x\in\ol\oo,\;t>0\\[2mm]
	u(x,0)=u_0(x)>0,&x\in\ol\oo
\end{cases}\lbl{1.3}\ees
has been systematically studied, where $a\in C(\boo)$ and $a>0$ in $\boo$.

Based on the work of Berestycki,  Coville and Vo (\cite{B-JFA16}), we can define
 \bess
 \bar\lambda&=&\dd\inf_{\phi\in C(\boo),\,\phi\gg 0}\,\sup_{x\in\boo}\frac{d\int_\oo J(x,y)\phi(y)\dy
 -d^*(x)\phi(x)+a(x)\phi(x)}{\phi(x)},\\[2mm]
\underline{\lambda}&=&\sup_{\phi\in C(\boo),\,\phi\gg 0}\,\inf_{x\in\boo}\frac{d\int_\oo J(x,y)\phi(y)\dy
 -d^*(x)\phi(x)+a(x)\phi(x)}{\phi(x)}
 \eess
where $\phi \gg 0$ represents $\phi(x)>0$ for all $x\in\boo$.
It is easy to verify that $\bar\lambda\geq \underline{\lambda}$.
Thanks to \cite[Theorem 2.2]{LCWdcds17}, one can prove that $\bar\lambda=\underline{\lambda}$, which is called the generalized principal eigenvalue. Such formulae are usually referred to as the Collatz-Wielandt characterization. Define an  operator $\mathcal{P}$  by
\[\mathcal{P}[\phi]= d\dd\int_{\Omega}J(x,y) \phi(y){\rm d}y-d^*(x)\phi+a(x)\phi, \;\; \phi \in C(\ol\oo).\]
The generalized principal eigenvalue of ${\cal P}$, which is denoted by $\lm({\cal P})$, coincides with the spectral bound, i.e., $\lm({\cal P})=s({\cal P})$  (\cite{LCWdcds17}).

The positive equilibrium solution of \eqref{1.3}, i.e., the positive solution of
 \bes
 d\dd\int_{\Omega}J(x,y)U(y){\rm d}y-d^*(x)U+U(a(x)-U)=0,\;\;x\in\ol\oo,
  \lbl{1.4}\ees
and the large-time behavior of the solution to \eqref{1.3} have been investigated in \cite[Theorem C]{SZ2012PAMS}, \cite[Theorems 1.6 and 1.7]{Cov}, and \cite[Theorem 3.10]{YLR2019JDE}. The dynamics of the non-critical case ($\lm(\mathcal{P})\neq 0$) are well-understood: if $\lm(\mathcal{P})>0$, then \eqref{1.4} has a unique positive solution, which is globally asymptotically stable. Conversely, if $\lm(\mathcal{P})<0$, then \eqref{1.4} has no positive solution, and the zero solution is globally asymptotically stable. However, for the critical case ($\lm(\mathcal{P})=0$), the dynamics are not yet fully resolved. Moreover, there is a natural question: Is it possible to give a general conclusion for a cooperative system?

The main aim of this paper is to give a positive answer about the above question. We approximate the generalized principal eigenvalue, which has the Collatz-Wielandt characterization, by perturbing the matrix $B(x)$ with identity matrix $I$ and constructing the monotonic upper and lower control systems with principal eigenvalues; and show that the generalized principal eigenvalue plays the same role as the usual principal eigenvalue.

For the convenience of description, we will introduce some standing notations. For any given $u,v\in\mathbb{R}^n$. We say $u\geq v$ refers to $u_i\geq v_i$ for all $i\in\sss$; $u >v$ refers to $u_i \geq v_i$ for all $i\in\sss$ but $u\neq v$; and $u \gg v$ refers to $u_i>v_i$ for all $i\in\sss$.
Let $M=(m_{ik})_{n\times n}$ be an matrix with constant coefficients. We recall that $M$ is cooperative (essentially positive) if $m_{ik}\ge 0$ for all $i\not=k$, and that $M$ is irreducible (fully coupled) if the index set $\sss$ cannot be split up in two disjoint nonempty sets ${\cal I}$ and ${\cal K}$ such that $m_{ik}=0$ for $i\in{\cal I}, k\in{\cal K}$.

We first provide the approximation and characterization of generalized principal eigenvalue. Set
  \[a_{ik}(x)=\pp_{u_k}f_i(x,0);\;\;\;b_{ik}(x)=a_{ik}(x),\;\;i\not=k;
   \;\;\;b_{ii}(x)=a_{ii}(x)-d_i^*(x),\]
and
 \bess
  B(x)=(b_{ik}(x))_{n\times n}.
   \eess
We define an operator ${\mathscr B}$ by
 \bes\begin{cases}
	{\mathscr B}[\phi]=({\mathscr B}_1[\phi],\cdots,{\mathscr B}_n[\phi]),\\[1mm]
	{\mathscr B}_i[\phi]=d_i\dd\int_\oo J_i(x,y)\phi_i(y)\dy+\sum_{k=1}^n b_{ik}(x)\phi_k(x),
\end{cases}\lbl{1.5}\ees
and assume that\vspace{-2mm}
\begin{itemize}
	\item[{\bf(B)}] $b_{ik}\in C(\boo)$ for all $i,k\in\sss$ and $B(x)=(b_{ik}(x))_{n \times n}$ is a cooperative matrix, and there exists $\tilde{x}\in\boo$ such that $B(\tilde{x})$ is irreducible.
\end{itemize}\vspace{-2mm}

A number $\lm$ is called the principal eigenvalue of ${\mathscr B}$ if it is an eigenvalue of ${\mathscr B}$ and corresponding eigenfunction is strongly positive. The principal eigenvalue of ${\mathscr B}$ is denoted by $\lm_p({\mathscr B})$. We also remark that $\lm_p({\mathscr B})$ must be the spectral bound of ${\mathscr B}$ (see, e.g., \cite[Lemma 2.4]{Zhang24}).
Throughout this paper, we always assume that {\bf(J)} and {\bf(D)} hold.
The first main result of this paper is the following theorem.\vspace{-1mm}

\begin{thmx}\lbl{thA}\rm(Approximation and characterization of the generalized principal eigenvalue)\, Assume that the condition {\bf(B)} holds. Then there exist $\ud B^\ep(x)=(\ud b^\ep_{ik}(x))_{n\times n}$ and $\ol B^\ep(x)=(\bar b^\ep_{ik}(x))_{n\times n}$, with $\ep>0$, satisfying
 \begin{enumerate}\vspace{-1mm}
\item[$\bullet$] $\ud b^\ep_{ik}, \bar b^\ep_{ik}\in C(\boo)$, and $\ud b^\ep_{ik}$ and $\bar b^\ep_{ik}$ are decreasing and increasing in $\ep$, respectively; and
 \bess
 \ud b^\ep_{ik}\le b_{ik}\le\bar b^\ep_{ik}\;\;\;\mbox{in}\;\;\ol\oo,\;\;\;
 \lim_{\ep\to 0^+}\ud b^\ep_{ik}=\lim_{\ep\to 0^+}\bar b^\ep_{ik}=b_{ik} \;\;\;\mbox{in}\;\; C(\ol\oo),
  \eess
  \end{enumerate}\vspace{-1mm}
such that\vspace{-1mm}
\begin{enumerate}[$(1)$]
\item operators $\ud{\mathscr B}^\ep$ and $\ol{\mathscr B}^\ep$
 have principal eigenvalues $\lm_p(\ud{\mathscr B}^\ep)$ and $\lm_p(\ol{\mathscr B}^\ep)$, respectively, where $\ud{\mathscr B}^\ep$ and $\ol{\mathscr B}^\ep$ are defined by \qq{1.5} with $b_{ik}$ replaced by $\ud b^\ep_{ik}$ and $\ol b^\ep_{ik}$, respectively.
  \vskip 6pt
\item $\lm_p(\ud{\mathscr B}^\ep)\le\lm_p(\ol{\mathscr B}^\ep)$, and $\lm_p(\ud{\mathscr B}^\ep)$ and $\lm_p(\ol{\mathscr B}^\ep)$ are strictly decreasing and increasing in $\ep$, respectively, and
  \bes
 \lim_{\ep\to 0^+}\lm_p(\ud{\mathscr B}^\ep)=\lim_{\ep\to 0^+}\lm_p(\ol{\mathscr B}^\ep)=\lm({\mathscr B}).
  \lbl{1.6}\ees
 \item $\lm({\mathscr B})$ has the characterization:
 \bes
\lm({\mathscr B})&=&\inf_{\phi\in X^{++}}\,\sup_{x\in\oo,\,i\in\sss}\frac{d_i\int_\oo J_i(x,y)\phi_i(y)\dy+\sum_{k=1}^n b_{ik}(x)\phi_k(x)}{\phi_i(x)}\nm\\[2mm]
 &=&\sup_{\phi\in X^{++}}\,\inf_{x\in\oo,\,i\in\sss}\frac{d_i\int_\oo J_i(x,y)\phi_i(y)\dy+\sum_{k=1}^n b_{ik}(x)\phi_k(x)}{\phi_i(x)},
 \lbl{1.7}\ees
where
  \[X^{++}=\{\phi=(\phi_1,\cdots,\phi_n):\,\phi_i\in C(\boo),\;\, \phi_i(x)>0\;\; {\rm in }\;\boo\}.\]
 \end{enumerate}
  \end{thmx}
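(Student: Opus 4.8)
The plan is to perturb only the diagonal of $B(x)$, leaving the coupling terms $b_{ik}$ ($i\ne k$) untouched; this retains cooperativity and the irreducibility at $\tilde x$ automatically, and it lets us make the perturbed zero-order matrices ``flat at their maxima'', which forces a genuine principal eigenvalue. Let $\mathsf m(x)$ denote the spectral bound of the essentially positive matrix $B(x)$; then $\mathsf m\in C(\boo)$ and $\beta:=\max_{\boo}\mathsf m$ is attained. I would set $\ud b^\ep_{ik}=\bar b^\ep_{ik}=b_{ik}$ for $i\ne k$ and
\[\bar b^\ep_{ii}(x)=b_{ii}(x)+\min\{\ep,\,\beta-\mathsf m(x)\}+\ep^2,\qquad \ud b^\ep_{ii}(x)=b_{ii}(x)-(\mathsf m(x)-\beta+\ep)^{+}-\ep^2.\]
These are continuous, lie above/below $b_{ii}$, are strictly monotone in $\ep$, and differ from $b_{ii}$ by at most $\ep+\ep^2$ in $C(\boo)$, which is the first bullet. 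The decisive point is that the spectral bound of $\ol B^\ep(x)$ equals $\min\{\mathsf m(x)+\ep,\beta\}+\ep^2$ and that of $\ud B^\ep(x)$ equals $\min\{\mathsf m(x),\beta-\ep\}-\ep^2$: each is constant on the relatively open set $\{x:\mathsf m(x)\ge\beta-\ep\}$ and attains its maximum over $\boo$ there. A restriction argument on a ball contained in this flat set, together with the system version of the scalar flatness criterion for the existence of a principal eigenvalue of a nonlocal dispersal operator (cf. \cite{Cov}, and \cite{SWZ23,SLLW2023JMPA,Zhang24} for cooperative systems), then shows that $\ud{\mathscr B}^\ep$---and hence $\ol{\mathscr B}^\ep$, which differs from $\ud{\mathscr B}^\ep$ by a scalar zero-order term---has a principal eigenvalue, equal to its spectral bound. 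This is conclusion (1).

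Since $\ud{\mathscr B}^\ep$ and $\ol{\mathscr B}^\ep$ have principal eigenvalues coinciding with their spectral bounds, each admits the two-sided Collatz-Wielandt characterization (see \cite{LCWdcds17,Zhang24}). Monotonicity of the inf-sup and sup-inf in the coefficients yields $\lm_p(\ud{\mathscr B}^\ep)\le\lm_p(\ol{\mathscr B}^{\ep'})$ for all $\ep,\ep'>0$, and, since $\bar b^\ep_{ii}$ is strictly increasing and $\ud b^\ep_{ii}$ strictly decreasing in $\ep$ while the principal eigenfunctions are strongly positive (irreducibility), $\ep\mapsto\lm_p(\ol{\mathscr B}^\ep)$ is strictly increasing and $\ep\mapsto\lm_p(\ud{\mathscr B}^\ep)$ strictly decreasing. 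Being monotone in $\ep$ and mutually bounded, the limits $\ud\lm:=\lim_{\ep\to0^+}\lm_p(\ud{\mathscr B}^\ep)$ and $\ol\lm:=\lim_{\ep\to0^+}\lm_p(\ol{\mathscr B}^\ep)$ exist with $\ud\lm\le\ol\lm$; this is the monotonicity part of (2).

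For the identification, let $\ud\mu,\ol\mu$ be the sup-inf and inf-sup quantities on the right-hand side of \qq{1.7}; a touching-point comparison (as recalled in the introduction for the scalar case) gives $\ud\mu\le\ol\mu$. From $\ud b^\ep_{ik}\le b_{ik}\le\bar b^\ep_{ik}$ and the Collatz-Wielandt formulas for the perturbed operators one gets $\lm_p(\ud{\mathscr B}^\ep)\le\ud\mu$ and $\lm_p(\ol{\mathscr B}^\ep)\ge\ol\mu$, hence $\ud\lm\le\ud\mu\le\ol\mu\le\ol\lm$. On the other hand a direct computation gives $\bar b^\ep_{ii}-\ud b^\ep_{ii}\equiv\ep+2\ep^2$ for every $i$, so $\ol{\mathscr B}^\ep[\phi]=\ud{\mathscr B}^\ep[\phi]+(\ep+2\ep^2)\phi$ and therefore $\lm_p(\ol{\mathscr B}^\ep)=\lm_p(\ud{\mathscr B}^\ep)+\ep+2\ep^2$; letting $\ep\to0^+$ gives $\ol\lm=\ud\lm$. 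Consequently $\ud\lm=\ud\mu=\ol\mu=\ol\lm=:\lm(\mathscr B)$, which is exactly \qq{1.6} and the characterization \qq{1.7}; finally $\lm_p(\ud{\mathscr B}^\ep)\le s(\mathscr B)\le\lm_p(\ol{\mathscr B}^\ep)$ (monotonicity of the spectral bound) forces $s(\mathscr B)=\lm(\mathscr B)$.

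The main difficulty lies not in the limiting arguments---once the controls are chosen, these reduce to bookkeeping with the min-max formulas and a single exact shift identity---but in the \emph{design} of $\ol B^\ep,\ud B^\ep$: they must simultaneously (i) trigger a sufficient condition for the \emph{existence} of a principal eigenvalue, and (ii) differ from $B$, and from each other, only by a scalar zero-order term, so that $\ol{\mathscr B}^\ep-\ud{\mathscr B}^\ep$ is a vanishing multiple of the identity and the shift identity closes $\ol\lm=\ud\lm$ with no loss. Had the perturbation touched the coupling entries, the last step would instead require a Harnack-type comparison of the components of the eigenfunctions, uniform in $\ep$, which is considerably more delicate; anchoring everything to the single scalar function $\mathsf m(x)=$ spectral bound of $B(x)$ is precisely what makes (i) and (ii) compatible. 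The one genuinely technical input still to be pinned down is the precise system-level form of the ``flatness at the maximum'' existence criterion used in the first step, and the verification that $\ol B^\ep,\ud B^\ep$ meet it.
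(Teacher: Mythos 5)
Your proposal is correct and follows essentially the same approach as the paper: perturb only the diagonal of $B(x)$ by a continuous scalar multiple of the identity so that the spectral bound of the perturbed matrix is constant on a neighborhood of its maximizer, invoke the flatness existence criterion (the paper's Lemma~\ref{le2.1}), and close the sandwich via the exact scalar-shift identity $\ol{\mathscr B}^\ep=\ud{\mathscr B}^\ep+cI$. Your $\min\{\cdot,\cdot\}$ and $(\cdot)^+$ formulas with the extra $\ep^2$ push are an equivalent packaging of the paper's piecewise definition on $\oo_\ep=\{x: s(B(x))\ge\eta-\ep\}$ (with shift $3\ep$ instead of your $\ep+2\ep^2$), so the two constructions differ only cosmetically.
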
\vspace{-1mm}

We call this number $\lm({\mathscr B})$ the {\rm generalized principal eigenvalue of ${\mathscr B}$}. Then $\lm({\mathscr B})$ is continuous respect to $B(x)$ (see, e.g., \cite[Lemma A.1]{Zhang24}).}  Clearly, the equation \qq{1.7} holds when $\lm({\mathscr B})$ is indeed a principal eigenvalue. In the following we will see that the approximation \qq{1.6} is also a powerful tool in applications, and the generalized principal eigenvalue plays the same role as the usual principal eigenvalue in analyzing the global dynamics of the problem \qq{1.1}.

Throughout this paper we always assume that $f_i(x,u)\in C(\boo\times\mathbb{R}_+^n)$ and denote $f(x,u)=(f_1(x,u),\cdots,f_n(x,u))$. Sometimes, we need the following assumptions.\vspace{-0.2mm}
\begin{itemize}
	\item[{\bf(H1)}]  $f_i(x,0)=0$ for all $x\in\boo$ and $i\in\sss$; $\partial_{u_k}f_i(x,u)$ is continuous in $\boo\times\mathbb{R}_+^n$ for all $i,k\in\sss$; and $f(x,u)$ is cooperative in $u\ge 0$, i.e., $\partial_{u_k}f_i(x,u)\ge 0$ for $x\in\boo$, $u\ge 0$ and $k\not=i$;\vspace{1.5mm}
 \item[{\bf(H2)}]  there exists $\tilde{x} \in \boo$ such that
 \bess
\kk(\partial_{u_k}f_i(\tilde x,u)\rr)_{n \times n}\;\;\mbox{is irreducible for all }\; u \geq 0.\vspace{-3mm}\eess
	\item[{\bf(H3)}]  $f(x,u)$ is strictly subhomogeneous with respect to $u\gg 0$, i.e.,
 \bess
 f(x,\delta u)>\delta f(x,u),\;\;\forall\;\delta\in (0,1),\; x\in\ol\oo,\; u\gg 0
 \eess
\end{itemize}

The equilibrium problem of \qq{1.1} is
 \bes\begin{cases}\label{1.8}
  d_i\dd\int_\oo J_i(x,y)U_i(y)\dy-d^*_i(x)U_i+f_i(x,U)=0,\;\;\; x\in\ol\oo, \\
  i=1,\cdots,n.
  \end{cases} \ees
Then the operator ${\mathscr B}$, defined by \qq{1.5}, is the linearization operator at zero corresponding to \qq{1.8}, and the condition {\bf(B)} holds under conditions {\bf(H1)}--{\bf(H2)}. Let $\lm({\mathscr B})$ be the generalized principle eigenvalue of ${\mathscr B}$. The second main result of this paper is the following theorem.

\begin{thmx}\lbl{thB}\rm(Global dynamics of \qq{1.1}) Assume that {\bf(H1)}--{\bf(H3)} hold. Let $u(x,t;u_0)$ be the unique solution of \qq{1.1}. Then the following statements are valid:\vspace{-1.2mm}
 \begin{enumerate}[$(1)$]
\item If $\lm({\mathscr B})>0$ and there exists $\ol U\in [C(\boo)]^n$ with $\ol U\gg 0$ in $\boo$ such that
	\bes
d_i\dd\int_\oo J_i(x,y)\ol U_i(y)\dy-d^*_i(x)\ol U_i
+f_i(x,\ol U)\leq 0,\;\; x\in\ol\oo,
	\lbl{1.9}\ees
for any $i \in \sss$, then \eqref{1.8} has a unique bounded positive solution $U$, and $U\in[C(\boo)]^n$, $U\le\ol U$ in $\boo$. Moreover,
 \bes
\lim\limits_{t\rightarrow +\infty} u(x,t;u_0)= U(x) \;\;\text{ uniformly in } \; \overline{\Omega}.\lbl{1.10}\ees
\item If $\lm({\mathscr B})< 0$, then \eqref{1.8} has no positive solution in $[C(\boo)]^n$. Moreover,  there exist $\sigma>0$ and $C>0$ such that
	\bes
u(x,t;u_0)\le C{\rm e}^{-\sigma t},\;\;\forall\; x\in\boo,\; t>0.
	\lbl{1.11}\ees
This shows that $u(x,t;u_0)$ converges exponentially to zero.\vspace{1.5mm}
	\item If $\lm({\mathscr B})=0$, and $f$ is strongly subhomogeneous, i.e., $f(x,\rho u) \gg \rho f(x,u)$ for all $x \in \ol \oo$, $u \gg 0$ and $\rho\in (0,1)$, then \eqref{1.8} has no positive solution in $[C(\boo)]^n$. If, in addition, there exists $\ol U\in [C(\boo)]^n$ with $\ol U\gg 0$ in $\boo$ such that \eqref{1.9} holds, then
	\bes
\lim_{t\rightarrow +\infty} u(x,t;u_0)=0\;\;\text{ uniformly in } \; \overline{\Omega}.\lbl{1.12}\ees
	\end{enumerate}\vspace{-2mm}
\end{thmx}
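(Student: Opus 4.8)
The plan is to reduce the global dynamics to the sign of $\lm({\mathscr B})$ by combining the approximation \qq{1.6} with standard monotone-dynamical-systems arguments and sub/supersolution comparison for cooperative nonlocal systems. The key observation throughout is that, for $\ep>0$ small, the control operators $\ud{\mathscr B}^\ep$ and $\ol{\mathscr B}^\ep$ possess genuine principal eigenvalues $\lm_p(\ud{\mathscr B}^\ep)$ and $\lm_p(\ol{\mathscr B}^\ep)$ with strongly positive eigenfunctions, and these eigenvalues converge to $\lm({\mathscr B})$ from the two sides. So a strict sign of $\lm({\mathscr B})$ transfers, for $\ep$ small, to the same strict sign of one of the approximating principal eigenvalues, and then a strongly positive eigenfunction $\ud\phi^\ep$ or $\ol\phi^\ep$ is available to build explicit sub/supersolutions.

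For part~(1), assume $\lm({\mathscr B})>0$. First I would record the comparison principle for \qq{1.1}: since $f$ is cooperative (H1), solutions are order preserving in the initial data, and $\ol U$ satisfying \qq{1.9} is a (time-independent) supersolution, so $u(x,t;u_0)$ stays below $\max\{\|u_0\|_\infty,\|\ol U\|_\infty\}$-type bounds and in fact $\limsup_{t\to\infty}u(\cdot,t;u_0)\le\ol U$. Next, since $\lm_p(\ud{\mathscr B}^\ep)\to\lm({\mathscr B})>0$, fix $\ep$ with $\lm_p(\ud{\mathscr B}^\ep)>0$ and let $\ud\phi^\ep\gg0$ be its eigenfunction; using $f_i(x,u)=\sum_k a_{ik}(x)u_k+o(|u|)$ near $u=0$ together with $\ud b^\ep_{ik}\le b_{ik}$, the function $\kappa\ud\phi^\ep$ is a subsolution of \qq{1.1} for $\kappa>0$ small, hence $\liminf_{t\to\infty}u(\cdot,t;u_0)\ge\kappa\ud\phi^\ep\gg0$; this is where the approximation does the work that a genuine principal eigenvalue would otherwise do. With persistence from below and boundedness from above in hand, I would invoke the strong subhomogeneity (H3) and irreducibility (H2): the semiflow on the order interval is monotone, subhomogeneous and (by H2) strongly so on the interior, so by the classical uniqueness theorem for subhomogeneous monotone systems (Krause--Nussbaum / Zhao-type results) \qq{1.8} has a unique positive equilibrium $U$ in that interval, $U\le\ol U$, and it attracts every orbit starting from $u_0>0$, giving \qq{1.10}. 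One should also argue $U$ is the unique bounded positive solution globally (not just in the interval) by a sweeping/sliding argument using a family of supersolutions $M\ol U$.

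For part~(2), assume $\lm({\mathscr B})<0$. Pick $\ep$ with $\lm_p(\ol{\mathscr B}^\ep)=:-2\sigma<0$ and eigenfunction $\ol\phi^\ep\gg0$. Since $b_{ik}\le\ol b^\ep_{ik}$ and, by (H1), $f_i(x,u)\le\sum_k a_{ik}(x)u_k$ for $u\ge0$ when $f$ is subhomogeneous (this needs the elementary inequality $f(x,u)\le$ its linearization at $0$, which follows from (H3) by letting $\delta\to1$, combined with cooperativity), the function $C\mathrm{e}^{-\sigma t}\ol\phi^\ep$ is a supersolution of \qq{1.1} for $C$ large; the comparison principle then yields \qq{1.11}, and nonexistence of a positive equilibrium is immediate (a positive equilibrium would be a fixed bounded orbit, contradicting the exponential decay, or directly: testing \qq{1.8} against $\ol\phi^\ep$ via the Collatz--Wielandt inequality \qq{1.7} forces a contradiction). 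For part~(3), when $\lm({\mathscr B})=0$ and $f$ is strongly subhomogeneous, nonexistence of a positive equilibrium follows by a Collatz--Wielandt argument: if $U\gg0$ solved \qq{1.8}, then strong subhomogeneity gives, for $\rho\in(0,1)$ close to $1$, a strict supersolution, which combined with \qq{1.7} would force $\lm({\mathscr B})<0$, a contradiction; the strict inequality in the strong-subhomogeneity hypothesis is exactly what upgrades the non-strict Collatz--Wielandt bound. Finally, with the supersolution $\ol U$ from \qq{1.9} present, I would show convergence to $0$ in \qq{1.12} by a squeezing argument: the orbit is trapped below $\ol U$, is nonincreasing in time along a suitably constructed supersolution family $\ol U(t)\downarrow U_\infty$, $U_\infty$ is a nonnegative equilibrium below $\ol U$, and by the just-proved nonexistence of positive equilibria $U_\infty\equiv0$.

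The main obstacle I anticipate is part~(3), the critical case $\lm({\mathscr B})=0$: here neither control operator gives a useful strict sign, so one cannot simply build exponentially decaying supersolutions, and the convergence to zero must be extracted purely from the monotone structure together with strong subhomogeneity and the existence of the supersolution $\ol U$ — essentially a dichotomy argument showing the $\omega$-limit set is a single equilibrium which, being dominated by $\ol U$ and positive-equilibrium-free, must be $0$. Making the $\omega$-limit set reduction rigorous without compactness of the solution map (the recurring difficulty flagged in the introduction) is the delicate point, and I expect it to rely on the order-preserving semiflow theory adapted to the nonlocal setting (e.g. asymptotic order stability / the limit-set dichotomy for strongly monotone maps), rather than on any compactness.
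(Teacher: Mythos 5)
Your overall strategy matches the paper's: use the approximating principal eigenvalues $\lm_p(\ud{\mathscr B}^\ep), \lm_p(\ol{\mathscr B}^\ep)$ and their strongly positive eigenfunctions to build sub/supersolutions, then ride the comparison principle. Parts (2) and (3) are largely on target (modulo a small slip in (2): the bound $f_i(x,u)\le\sum_k a_{ik}(x)u_k$ comes from letting $\delta\to 0^+$ in the decreasing map $\delta\mapsto f_i(x,\delta u)/\delta$, not $\delta\to 1$).

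The real gap is in part (1), in how you close the convergence argument. You hand off the uniqueness of the positive equilibrium and the attractivity to ``classical uniqueness theorems for subhomogeneous monotone systems (Krause--Nussbaum / Zhao-type results)'' and, later, to ``order-preserving semiflow theory / limit-set dichotomy.'' Those frameworks need (asymptotic) compactness of the solution map, which is exactly what the nonlocal problem lacks and what the whole paper is structured to circumvent. The paper's actual mechanism is elementary and deliberately avoids that machinery: squeeze $u(\cdot,t;u_0)$ between the monotone orbits $u(\cdot,t;\rho\ud\phi)\uparrow\widetilde U$ and $u(\cdot,t;\gamma\ol U)\downarrow\widehat U$; these pointwise limits satisfy \eqref{1.8} but are a priori only semicontinuous, so one needs a uniqueness theorem for \emph{bounded $L^\infty$} positive solutions (Theorem~\ref{th3.3}, proved directly from the strong maximum principle, irreducibility, and strict subhomogeneity, with no compactness) to force $\widetilde U=\widehat U=U\in[C(\boo)]^n$, and then Dini's theorem upgrades the monotone pointwise convergence to uniform convergence. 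Without an $L^\infty$-level uniqueness result, the sandwich does not close, and the abstract semiflow theorems are not available. You should replace the appeal to classical monotone dynamics by this sandwich/$L^\infty$-uniqueness/Dini chain.

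Interestingly, your squeezing idea for part (3) is a genuine alternative to what the paper does. The paper perturbs the nonlinearity to $f^\ep$ so that the perturbed spectral bound is positive, invokes part (1) for each $\ep$, and then shows $U^\ep\downarrow 0$ via a uniform-in-$\ep$ strong-subhomogeneity contradiction. Your plan to let $u(\cdot,t;\gamma\ol U)\downarrow U_\infty$ pointwise and then rule out $U_\infty>0$ by nonexistence is simpler, and it does work — but only once the nonexistence argument of step~1 is extended from $[C(\boo)]^n$ to bounded $U\in[L^\yy(\oo)]^n$ with $\inf_\oo U_i>0$ (this holds because $\{U(x)\}$ then lies in a fixed compact box, so the lower bound $\eta_0$ in $\rho_0 f(x,U)\le f(x,\rho_0 U)-\eta_0\rho_0 U$ is uniform). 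You also worry there about $\omega$-limit set reductions without compactness, but that worry is misplaced: the pointwise monotone limit of the decreasing orbit satisfies \eqref{1.8} by dominated convergence (no limit-set theory needed), and the uniform convergence to $0$ follows from Dini once $U_\infty\equiv 0$. So spelling out the $L^\infty$ extension of the nonexistence argument is the piece you still need for your version of part (3).
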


In Theorem \ref{thB}, we established the threshold dynamics using the generalized principal eigenvalue, which show that the generalized principal eigenvalue plays the same role as the usual principal eigenvalue, even for the critical case. This is also a new observation even for the scalar nonlocal dispersal  equation. In order to obtain this theorem, we first concern with the strong maximum principle and positivity of bounded non-negative solutions (without assumption about continuity), and the uniqueness and continuity of bounded positive solutions of \qq{1.8}. In the case where $\lm({\mathscr B})>0$ and \eqref{1.8} admits a positive upper solution, we prove that a bounded positive continuous solution of \qq{1.8} exists and that it is globally asymptotically attractive for \qq{1.1} using the upper and lower solutions method, where the lower solution is constructed by the lower control system. In the case where $\lm({\mathscr B})<0$, we obtain \eqref{1.11} by constructing a suitable upper solution using the upper control system. For the critical case ($\lm({\mathscr B})=0$), we first prove that the bounded positive continuous solution of \qq{1.8} does not exist by utilizing the Collatz-Wielandt characterization and show that the zero solution is globally asymptotically attractive combining with the perturbation methods and the results established in the case where $\lm({\mathscr B})>0$.
Besides, we also discuss the continuity of bounded non-negative solutions without assumptions {\bf(H1)}--{\bf(H3)}.

In this paper, we also investigate the dynamics of the West Nile virus model by applying the theoretical results obtained earlier. We begin by analyzing the dynamics of a limiting system and its perturbation systems using the conclusions established in Theorem \ref{thB}. Motivated by \cite{Wang24}, we employ the upper and lower solutions method to study the global dynamics, as opposed to the chain transitive sets theory. Compared to the chain transitive sets theory, the upper and lower solutions method is more fundamental and accessible for readers. Furthermore, when using the chain transitive sets theory to analyze nonlocal dispersal problems, additional conditions are required to ensure the asymptotic compactness of the solution mapping. These additional conditions can be bypassed by adopting the upper and lower solutions method, offering a more streamlined approach.

The organization of this paper is as follows. In Section 2, we prove Theorem \ref{thA}. We first construct the upper and lower control matrices $\ol B^\ep(x)$ and $\ud B^\ep(x)$ of $B(x)$, by perturbing $B(x)$ with identity matrix $I$, such that the corresponding operators $\ol{\mathscr B}^\ep$ and $\ud{\mathscr B}^\ep$ have principal eigenvalues $\lm_p(\ol{\mathscr B}^\ep)$ and $\lm_p(\ud{\mathscr B}^\ep)$, respectively. Then prove that $\lm_p(\ol{\mathscr B}^\ep)$ and $\lm_p(\ud{\mathscr B}^\ep)$ have the same limit, and this limit is exact the generalized principal eigenvalue $\lm({\mathscr B})$ of the operator ${\mathscr B}$. We remark that this perturbation method is very intuitive, simple and powerful in applications. In Section \ref{sec:TD} we study the threshold dynamics for cooperative systems. The existence, uniqueness, continuity and stabilities of positive equilibrium solutions are obtained. In Section \ref{sec:APP}, we use the abstract results obtained in Sections \ref{sec:GPE} and \ref{sec:TD} to investigate a West Nile virus model. The section \ref{sec:discussion} is a brief discussion.

\section{Proof of Theorem \ref{thA}}\setcounter{equation}{0} \label{sec:GPE}
{\setlength\arraycolsep{2pt}

Before giving the proof of Theorem \ref{thA}, we first state a sufficient condition to ensure the existence of the principal eigenvalue. Under the condition {\bf(B)}, by a variant of Perron-Frobenius Theorem, the maximum of the real parts of all eigenvalues of $B(x)$, denoted by $s(B(x))$, is an eigenvalue of $B(x)$; since $B(x)$ is also continuous in $x$, so is $s(B(x))$. Hence $\max_{\ol\oo}s(B(x))$ is well-defined.\vspace{-2mm}
	
\begin{lem}\lbl{le2.1}	Assume that the condition {\bf(B)} holds and there exists an open set $\Omega_0\subset\Omega$ such that $[\max_{\ol\oo}s(B(x))-s(B(x))]^{-1}\not \in L^1(\Omega_0)$. Then the operator ${\mathscr B}$ has a principal eigenvalue $\lm_p({\mathscr B})$.
 \end{lem}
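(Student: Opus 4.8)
\textbf{Proof proposal for Lemma \ref{le2.1}.}

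The plan is to realize the generalized Krein--Rutman approach: show that the condition $[\max_{\ol\oo}s(B(x))-s(B(x))]^{-1}\not\in L^1(\Omega_0)$ forces a spectral gap between the spectral radius and the essential spectral radius of an associated positive operator, so that the principal eigenvalue exists. First I would reformulate the eigenvalue problem ${\mathscr B}[\phi]=\lm\phi$ in resolvent form: for $\lm$ large, write $(\lm I-{\mathscr B})\phi=0$ as $\phi = (\lm I - D - M(x))^{-1}\big(d_i\int_\oo J_i(x,y)\phi_i(y)\dy\big)_i$, where $D$ collects the $-d_i^*(x)$ terms and $M(x)$ is the zero-order coupling matrix $B(x)$ minus its diagonal diffusion-loss part — more precisely I would absorb the pointwise-multiplication part into the "potential" and treat the integral part as the compact-like perturbation. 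Concretely, define for $\lm>\max_{\ol\oo}s(B(x))$ the bounded positive operator $\mathcal{K}_\lm[\psi]_i = \big((\lm I - B(x))^{-1}\big)_{ik} d_k\int_\oo J_k(x,y)\psi_k(y)\dy$ (the matrix inverse is well-defined and entrywise nonnegative because $\lm I-B(x)$ is a nonsingular $M$-matrix for such $\lm$, by {\bf(B)} and Perron--Frobenius). Then $\lm$ is an eigenvalue of ${\mathscr B}$ with strongly positive eigenfunction iff $1$ is an eigenvalue of $\mathcal{K}_\lm$ with strongly positive eigenfunction, i.e. iff $r(\mathcal{K}_\lm)=1$ is attained.

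The key steps are then: (i) show $r(\mathcal{K}_\lm)$ is continuous and strictly decreasing in $\lm$ on $(\max_{\ol\oo}s(B(x)),\infty)$, with $r(\mathcal{K}_\lm)\to 0$ as $\lm\to\infty$ — this is standard monotonicity for resolvent-type positive operators together with irreducibility at $\tilde x$ giving strict monotonicity; (ii) show $r(\mathcal{K}_\lm)\to+\infty$ (or at least exceeds $1$) as $\lm\downarrow\max_{\ol\oo}s(B(x))$ — here is where the non-integrability hypothesis enters, exactly as in Coville's scalar argument: test $r(\mathcal{K}_\lm)$ from below using a localized test function supported in $\Omega_0$ and the diagonal lower bound $\big((\lm I-B(x))^{-1}\big)_{ii}\ge (\lm-s(B(x)))^{-1}$ up to constants, so that $r(\mathcal{K}_\lm)\gtrsim \int_{\Omega_0}(\lm-\max_{\ol\oo}s(B(x)) + (\max_{\ol\oo}s(B(x))-s(B(x))))^{-1}(\cdots)\to\infty$ by monotone convergence and the $L^1$ failure; (iii) conclude by the intermediate value property that there is a unique $\lm^*$ with $r(\mathcal{K}_{\lm^*})=1$; (iv) show this $r(\mathcal{K}_{\lm^*})$ is actually attained by a strongly positive eigenfunction. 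For step (iv) I would invoke the generalized Krein--Rutman theorem as cited (\cite{EPS72,N1981FPT}): $\mathcal{K}_\lm$ decomposes as a multiplication operator (with spectral radius that goes to $0$ as $\lm$ increases, controlling the essential spectrum) plus a genuinely smoothing term from the $J_i$ kernels; as $\lm\downarrow\max_{\ol\oo}s(B(x))$ the essential radius stays bounded while $r(\mathcal{K}_\lm)\to\infty$, so at $\lm^*$ we have $r(\mathcal{K}_{\lm^*})=1 > r_{\mathrm{ess}}(\mathcal{K}_{\lm^*})$, forcing $1$ to be an eigenvalue with positive eigenfunction, and irreducibility upgrades positivity to strong positivity. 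Finally translate back: $\lm^* = \lm_p({\mathscr B})$, which coincides with $s({\mathscr B})$ by the cited \cite[Lemma 2.4]{Zhang24}.

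The main obstacle I expect is step (iv), the control of the essential spectral radius: one must verify that the multiplication part of $\mathcal{K}_\lm$ has essential radius that is strictly below $1$ at the critical $\lm^*$, i.e. that the "loss" coefficients genuinely dominate so the non-compact part is subcritical. This is precisely the place where the quantitative hypothesis on $s(B(x))$ is doing work beyond mere monotonicity, and it requires carefully identifying the essential spectrum of the matrix-multiplication operator $\psi\mapsto (\lm I-B(x))^{-1}\mathrm{diag}(d_i)\psi$ restricted appropriately — its spectrum is $\{1/(\lm - \mu) : \mu \in \bigcup_x \sigma(B(x))\}$ up to the structure of the kernel, so one needs $\max_x \max\{\mathrm{Re}\,\mu\} = \max_{\ol\oo} s(B(x))$ to be strictly less than $\lm^*$, which follows once (ii) is established since (ii) gives $\lm^* > \max_{\ol\oo} s(B(x))$. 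A secondary technical point is making the localized lower-bound test in step (ii) rigorous in the system setting (choosing which component to excite and handling the off-diagonal coupling in the matrix resolvent), but the diagonal entries of $(\lm I - B(x))^{-1}$ already satisfy the needed pointwise lower bound, so the scalar argument of \cite{Cov} transfers with only cosmetic changes.
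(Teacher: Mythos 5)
Your overall scaffolding -- pass to the resolvent-type operator $\mathcal{K}_\lm=(\lm I-{\cal B})^{-1}{\cal J}$ (equivalently ${\cal J}(\lm I-{\cal B})^{-1}$, same nonzero spectrum), show its spectral radius is $>1$ for $\lm$ slightly above $\eta:=\max_{\ol\oo}s(B(x))$ using the non-integrability hypothesis, then run a Krein--Rutman argument -- is indeed the paper's route (Lemma~\ref{lem:LJG} plus the test-function estimate). However your key step (ii) contains a genuine gap, and your step (iv) is solving a problem that isn't there.

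The claimed pointwise bound $\big((\lm I-B(x))^{-1}\big)_{ii}\gtrsim(\lm-s(B(x)))^{-1}$ is false in general. By Cramer's rule, $\big((\lm I-B)^{-1}\big)_{ii}=\det(\lm I-B_{(ii)})/\det(\lm I-B)$; the denominator vanishes linearly at $\lm=s(B)$, but the numerator $\det(s(B)I-B_{(ii)})$ need not be bounded away from zero -- it \emph{is} positive when $B$ is irreducible (since then $s(B_{(ii)})<s(B)$ strictly), but if $B$ is reducible the critical index need not be $i$; e.g.\ $B=\mathrm{diag}(1,0)$ has $((\lm I-B)^{-1})_{22}=1/\lm$ bounded as $\lm\to 1^+$. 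Condition {\bf(B)} only gives irreducibility at some $\tilde x$, not at the maximizer $\bar x$ of $s(B(\cdot))$, so your lower bound can fail exactly where you need it. The paper sidesteps this by using the Perron eigenvector $w(x)$ of $B(x)$: the exact identity $(\lm I-B(x))^{-1}w(x)=\dfrac{w(x)}{\lm-s(B(x))}$ replaces any entrywise resolvent estimate, and strong positivity of $w(\bar x)$ in the case $B(\bar x)$ irreducible gives the needed uniform componentwise lower bound near $\bar x$; the case $B(\bar x)$ reducible is handled separately by passing to an irreducible principal block $\hat B$ with $s(\hat B(\bar x))=\eta$ and first proving the block operator has a principal eigenvalue $\hat\lm>\eta$, then noting $s({\mathscr B})\ge\hat\lm$. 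Your sketch omits this reducible case entirely, which is not a cosmetic difference from the scalar Coville argument.

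On step (iv): since each $J_i$ is continuous on $\boo\times\boo$, the operator ${\cal J}$ is compact on $[C(\boo)]^n$, so $\mathcal{K}_\lm$ is compact for every $\lm>\eta$. Once you have $r(\mathcal{K}_{\lm^*})=1>0$, the standard Krein--Rutman theorem immediately gives a nonnegative eigenfunction; the "essential spectral radius of $\mathcal{K}_\lm$" discussion is vacuous because $r_{\mathrm{ess}}(\mathcal{K}_\lm)=0$. The essential-spectrum computation $\sigma_e({\mathscr B})=\cup_x\sigma(B(x))$ applies to ${\mathscr B}$ itself (used to characterize when a principal eigenvalue exists, as in Lemma~\ref{lem:LJG}(ii)), not to $\mathcal{K}_\lm$; you've conflated the two operators. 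What does require a short argument, and you only gesture at, is upgrading the nonnegative eigenfunction to a strongly positive one -- the paper does this by showing $({\cal J}+c_0 I+{\cal B})^m$ is strongly positive for $m$ large, using $(B(\tilde x)+c_0 I)^m$ strongly positive and the $J_i(x,x)>0$ hypothesis.
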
\vspace{-2mm}

Under the condition that $B(x)$ is irreducible for each $x\in\boo$, i.e., $B(x)$ is strongly irreducible, Lemma \ref{le2.1} was obtained in \cite[Theorem 2.2]{BS17} and \cite[Corollary 1.3]{SWZ23}. For our present case, Lemma \ref{le2.1} may be known, but we do not know the relevant literature. For the convenience of readers, we will provide its proof in the appendix.\vspace{-2mm}

\begin{proof}[Proof of Theorem \ref{thA}] {\it Step 1}: The construction of the lower control matrix $\ud B^\ep(x)$ of $B(x)$. Set
 \[\eta=\max_{\ol\oo}s(B(x)).\]
For the given $0<\ep\ll 1$, we define
 \[\oo_\ep=\big\{x\in\ol\oo:\,s(B(x))\ge\eta-\ep\big\}.\]
Then $\oo_\ep$ is a closed subset of $\ol\oo$, and $s(B(x))=\eta-\ep$ on $\partial\oo_\ep$. Define
 \bess
\ud b_{ik}^\ep(x)&=&b_{ik}(x),\;\;x\in\ol\oo\;\;\;\mbox{for}\;\;i\not=k,\eess
and
 \bess
 \ud b_{ii}^\ep(x)&=&\begin{cases}
 b_{ii}(x)-2\ep+\eta-s(B(x)),\; &x\in\oo_\ep,\\
 b_{ii}(x)-\ep,\; &x\in\ol\oo\setminus\oo_\ep.
  \end{cases}\eess
Then $\ud b_{ii}^\ep(x)\le b_{ii}(x)-\ep$ in $\boo$. Set $\ud B^\ep(x)=(\ud b_{ik}^\ep(x))_{n\times n}$, i.e.,
 \bess
 \ud B^\ep(x)=\left\{\begin{array}{ll}
 B(x)+[\eta-2\ep-s(B(x))]I,\; &x\in\oo_\ep,\\[1mm]
  B(x)-\ep I,\; &x\in\ol\oo\setminus\oo_\ep. \end{array}\right.
   \eess
Then $\ud B^\ep(x)$ is continuous and cooperative, and $\ud B^\ep(\tilde{x})$ is irreducible. It is clear that
 \bess\left\{\begin{array}{ll}
 s(\ud B^\ep(x))=s(B(x))+\eta-2\ep-s(B(x))=\eta-2\ep,\; &x\in\oo_\ep,\\[1.5mm]
 s(\ud B^\ep(x))=s(B(x))-\ep<\eta-2\ep,\; &x\in\ol\oo\setminus\oo_\ep.
  \end{array}\right.\eess
Define the operator $\ud{\mathscr B}^\ep$ as the manner \qq{1.5} with $b_{ik}$ replaced by $\ud b^\ep_{ik}$. Then $\ud{\mathscr B}^\ep$ has a principal eigenvalue by Lemma \ref{le2.1}, denoted by $\lm_p(\ud{\mathscr B}^\ep)$. Moreover, the following hold (\cite{SWZ23}):
  \bes
\lm_p(\ud{\mathscr B}^\ep)&=&\inf_{\phi\in X^{++}}\,\sup_{x\in\oo,\,i\in\sss}\frac{d_i\int_\oo J_i(x,y)\phi_i(y)\dy+\sum_{k=1}^n \ud b_{ik}^\ep(x)\phi_k(x)}{\phi_i(x)}\nm\\[2mm]
&=&\sup_{\phi\in X^{++}}\,\inf_{x\in\oo,\,i\in\sss}\frac{d_i\int_\oo J_i(x,y)\phi_i(y)\dy+\sum_{k=1}^n \ud b_{ik}^\ep(x)\phi_k(x)}{\phi_i(x)}.
 \lbl{2.1}\ees

{\it Step 2}: The construction of the upper control matrix $\ol B^\ep(x)$ of $B(x)$.
Define
\bess
\bar b_{ik}^\ep(x)=b_{ik}(x),\;\;x\in\ol\oo\;\;\;\mbox{for}\;\;i\not=k,\eess
and
  \bess
\bar b_{ii}^\ep(x)=\left\{\begin{array}{ll}
	b_{ii}(x)+\ep+\eta-s(B(x)),\; &x\in\oo_\ep,\\[1.5mm]
	b_{ii}(x)+2\ep,\; &x\in\ol\oo\setminus\oo_\ep. \end{array}\right.\eess
Then $\bar b_{ii}^\ep(x)\ge b_{ii}(x)+\ep$ in $\boo$. Set $\ol B^\ep(x)=(\bar b_{ik}^\ep(x))_{n\times n}$. Then $\ol B^\ep(x)$ is continuous and cooperative, and $\ol B^\ep(\tilde{x})$ is irreducible. Moreover,
  \bess\left\{\begin{array}{ll}
	s(\ol B^\ep(x))=s(B(x))+\ep+\eta-s(B(x))=\ep+\eta,\; &x\in\oo_\ep,\\[1.5mm]
	s(\ol B^\ep(x))=s(B(x))+2\ep<\ep+\eta,\; &x\in\ol\oo\setminus\oo_\ep.
  \end{array}\right.\eess
Define the operator $\ol{\mathscr B}^\ep$ as the manner \qq{1.5} with $b_{ik}$ replaced by $\bar b^\ep_{ik}$. Then $\ol{\mathscr B}^\ep$ has a principal eigenvalue by Lemma \ref{le2.1}, denoted by $\lm_p(\ud{\mathscr B}^\ep)$. Furthermore, the following hold (\cite{SWZ23}):
  \bes
\lm_p(\ol{\mathscr B}^\ep)&=&\inf_{\phi\in X^{++}}\,\sup_{x\in\oo,\,i\in\sss}\frac{d_i\int_\oo J_i(x,y)\phi_i(y)\dy+\sum_{k=1}^n \bar b_{ik}^\ep(x)\phi_k(x)}{\phi_i(x)}\nm\\[2mm]
&=&\sup_{\phi\in X^{++}}\,\inf_{x\in\oo,\,i\in\sss}\frac{d_i\int_\oo J_i(x,y)\phi_i(y)\dy+\sum_{k=1}^n \bar b_{ik}^\ep(x)\phi_k(x)}{\phi_i(x)}.
 \lbl{2.2}\ees
It is obvious that
 \bess
 \ol B^\ep(x)= \ud B^\ep(x)+3\ep I.
\eess

{\it Step 3}: From the constructions of $\ud B^\ep$ and $\ol B^\ep$, and the expressions of \qq{2.1} and \qq{2.2}, we easily see that
 \bess
 \lm_p(\ud{\mathscr B}^\ep)\le\inf_{\phi\in X^{++}}\,\sup_{x\in\oo,\,i\in\sss}\frac{d_i\int_\oo J_i(x,y)\phi_i(y)\dy+\sum_{k=1}^n b_{ik}(x)\phi_k(x)}{\phi_i(x)}\le \lm_p(\ol{\mathscr B}^\ep)=\lm_p(\ud{\mathscr B}^\ep)+3\ep,
 \eess
and $\lm_p(\ud{\mathscr B}^\ep)$ and $\lm_p(\ol{\mathscr B}^\ep)$ are strictly decreasing and increasing in $\ep$, respectively. Thus the limits $\lim_{\ep\to 0}\lm_p(\ud{\mathscr B}^\ep)$ and $\lim_{\ep\to 0}\lm_p(\ol{\mathscr B}^\ep)$ exist, and they are equal, denoted by $\lm({\mathscr B})$. This number $\lm({\mathscr B})$ is called the generalized principal eigenvalue of ${\mathscr B}$. Certainly, \qq{1.7} holds. The proof is complete.
\end{proof}\vspace{-2mm}

 
\section{Threshold dynamics for cooperative systems}\setcounter{equation}{0}\label{sec:TD}

As the applications of Theorem \ref{thA}, in this section we study the threshold dynamics for cooperative systems \qq{1.1}.

\subsection{Some properties of nonnegative solutions of \qq{1.8}}

Because the classical reaction diffusion problem
\bes\left\{\begin{array}{lll}
	u_{it}=d_i\Delta u_i+f_i(x,u),\; &x\in\oo,\;&t>0,\\[1mm]
	u_i=0,\;\;\mbox{or}\;\;\pp_\nu u_i=0,\; &x\in\pp\oo,\;&t>0,\\[1mm]
	u_i(x,0)=u_{i0}(x)\ge0,\,\not\equiv 0, &x\in\boo,\\[1mm]
	i=1,\cdots,n.
\end{array}\rr.\lbl{3.1}\ees
and its equilibrium problem have regularity, its non-negative equilibrium solution must belong to $[W^2_p(\oo)]^n$. In the study of equilibrium solutions, using the upper and lower solution method, the limit of the iterative sequence must be continuous. Moreover, if the solution $u(x,t)$ of \qq{3.1} is bounded and monotone in time $t$, then it must continuously converge to an equilibrium solution. However, the solution map of the nonlocal dispersal problem \qq{1.1} has no regularity, and hence, it is challenging to obtain the continuity of the limit of the iterative sequence.

In this part, we concern with the strong maximum principle and positivity of bounded non-negative solutions, and the uniqueness of bounded positive solutions of \qq{1.8}, and then discuss the continuity of bounded non-negative solutions.

We first provide the strong maximum principle.\vspace{-2mm}

\begin{lem}\lbl{l3.1}{\rm(Strong maximum principle)} Let $p\in L^\yy(\oo)$ and $0\le U\in L^\yy(\oo)$ satisfy
 \bes
 \int_\oo J(x,y)U(y)\dy-d^*(x)U(x)+p(x)U(x)\le 0,\;\;x\in\boo,
  \lbl{3.2}\ees
where $d^*(x)=d>0$ or $d^*(x)=dj(x)$. Then either $U\equiv 0$, or $U>0$ in $\boo$ and $\inf_{\oo}U>0$.
\end{lem}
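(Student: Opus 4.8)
The statement is a strong maximum principle: a bounded nonnegative $U$ satisfying the differential inequality $\int_\oo J(x,y)U(y)\dy - d^*(x)U(x) + p(x)U(x) \le 0$ in $\boo$ is either identically zero or uniformly bounded below by a positive constant. The plan is to argue by contradiction: suppose $U\not\equiv 0$ but $\inf_\oo U = 0$. I would first rewrite the inequality as
\[
d^*(x)U(x) \ge \int_\oo J(x,y)U(y)\dy + p(x)U(x),
\]
and since $d^*(x) = d$ or $d^*(x) = dj(x)$ with $j(x) = \int_\oo J(y,x)\dy$, note that $d^*$ is bounded above and bounded below away from $0$ on $\boo$ (using {\bf(J)}, which gives $J(x,x)>0$ and continuity, so $j(x)>0$ on the compact set $\boo$; in the Dirichlet case $d^*=d$ is trivially bounded below). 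Hence I can divide to get $U(x) \ge \frac{1}{d^*(x)}\big(\int_\oo J(x,y)U(y)\dy + p(x)U(x)\big)$, i.e. $U(x)\big(1 - \frac{p(x)}{d^*(x)}\big) \ge \frac{1}{d^*(x)}\int_\oo J(x,y)U(y)\dy \ge 0$.

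\textbf{Key steps.} First, establish positivity on a neighborhood: let $x_0\in\boo$ be a point where $U$ is ``substantially positive'' — more precisely, since $U\in L^\yy$, $U\ge 0$ and $U\not\equiv 0$, the set $\{U>0\}$ has positive measure, so $\int_\oo J(x,y)U(y)\dy > 0$ for every $x$ for which $J(x,\cdot)$ does not vanish a.e. on $\{U>0\}$. Using $J(x,x)>0$ and continuity of $J$, for each fixed $x$ there is a neighborhood on which $J(x,\cdot)$ is bounded below by a positive constant, so actually $\int_\oo J(x,y)U(y)\dy>0$ for \emph{all} $x\in\boo$ provided $\{U>0\}$ accumulates suitably; the cleanest route is: pick a ball $B\subset\oo$ on which $\int_B U > 0$, then for any $x\in\boo$ with $\mathrm{dist}(x,B)$ small, $J(x,y)\ge c>0$ on $B$, giving $\int_\oo J(x,y)U(y)\dy \ge c\int_B U > 0$. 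This propagates positivity from $B$ to a slightly larger set, and since $\boo$ is connected, a chaining/covering argument (finitely many overlapping balls covering $\boo$) upgrades this to $U > 0$ a.e. on $\boo$, and in fact $U$ is bounded below on $\boo$ away from $0$: from the displayed inequality, $U(x) \ge \frac{c}{d^*(x)}\int_B U - \frac{p(x)}{d^*(x)}\cdot 0$... — the subtlety with the $p(x)U(x)$ term is handled by moving it across, as above, and using $\inf_\oo U \ge 0$ to bound $U(x)(1-p(x)/d^*(x))$ from below by a positive constant on a neighborhood of $\bar B$, then iterating. Second, to get the \emph{uniform} lower bound $\inf_\oo U>0$ rather than just $U>0$ pointwise, I would run the chaining argument quantitatively: cover $\boo$ by finitely many balls $B_1,\dots,B_m$ (with $B_1 = B$) such that consecutive ones overlap on a set of positive measure, and show inductively that $\mathrm{essinf}_{B_{j+1}} U \ge \kappa\, \mathrm{essinf}_{B_j\cap B_{j+1}} U$ for a uniform constant $\kappa>0$ depending only on $\|p\|_\yy$, $\inf J$ over the relevant compact sets, and $\sup d^*$, $\inf d^*$. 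Since $m$ is finite, this yields a uniform positive lower bound on all of $\boo$.

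\textbf{Main obstacle.} The delicate point is the interaction with the zeroth-order coefficient $p(x)$, which need not have any sign and is only $L^\yy$: the ``one-point'' estimate $d^*(x)U(x) \ge \int_\oo J(x,y)U(y)\dy + p(x)U(x)$ does not immediately give $U(x)\ge$ (something positive) unless we know $1 - p(x)/d^*(x) > 0$, which may fail. The fix is that we do not need to control a single point but to propagate a lower bound: rewrite as $\big(d^*(x) - p(x)\big)U(x) \ge \int_\oo J(x,y)U(y)\dy$; if $d^*(x) - p(x) > 0$ at $x$ we directly get $U(x) \ge \frac{1}{d^*(x)-p(x)}\int_\oo J(x,y)U(y)\dy$, and if $d^*(x)-p(x)\le 0$ then the inequality forces $\int_\oo J(x,y)U(y)\dy \le 0$, hence $= 0$, hence (by the positivity of $J(x,\cdot)$ near $x$ established above and $U\ge 0$) $U$ vanishes a.e. near $x$ — which, combined with the connectedness/chaining argument, would propagate to $U\equiv 0$, contradicting $U\not\equiv 0$. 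So the dichotomy ``$U\equiv 0$ or $U$ uniformly positive'' emerges precisely from this case split. I expect the bookkeeping in the quantitative chaining — choosing the finite cover and tracking the constant $\kappa$ uniformly, while correctly handling the sign-indefinite $p$ in each link — to be the main technical burden; everything else is a routine use of {\bf(J)} and compactness of $\boo$.
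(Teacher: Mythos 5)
Your proposal is correct and rests on the same underlying mechanism as the paper's proof: positivity propagates because $J(x,x)>0$ forces $\int_\oo J(x,y)U(y)\,dy>0$ to be inherited from one point to a neighborhood, and connectedness of $\boo$ then globalizes it. But the executions differ in two worthwhile ways.

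First, the paper isolates the auxiliary function $h(x)=\int_\oo J(x,y)U(y)\,dy$, which is \emph{continuous} in $\boo$, and shows the set $\mathcal{O}=\{h>0\}$ is simultaneously open (continuity of $h$), closed (a short contradiction using $J(\bar x,\bar x)>0$), and either empty or nonempty; connectedness then gives $h\equiv 0$ or $h>0$ on all of $\boo$. This replaces your ball-chaining for pointwise positivity with a single clean topological step. Once $h>0$ everywhere, $(d^*(x)-p(x))U(x)\ge h(x)>0$ forces $U(x)>0$ \emph{and} $d^*(x)-p(x)>0$ automatically, so the sign-indefiniteness of $p$ that you flag as the ``main obstacle'' disappears for free — you do not need the case split you propose, and in fact your treatment of the $d^*-p\le 0$ case (``then $\int J U=0$ near $x$, hence chain to $U\equiv 0$'') quietly requires the same chaining you were trying to set up, so it would be circular if not organized carefully.

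Second, and more substantively: for the uniform lower bound $\inf_\oo U>0$ you propose a quantitative finite-cover chaining argument tracking a constant $\kappa$ link by link. The paper does something much lighter: suppose $\inf_\oo U=0$, take $x_k\to x_0\in\boo$ with $U(x_k)\to 0$, note $h(x_k)\to h(x_0)>0$ by continuity of $h$ while $d^*(x_k)U(x_k)\to 0$ and $p(x_k)U(x_k)\to 0$ by boundedness of $d^*,p$, and pass to the limit in \eqref{3.2} to get $h(x_0)\le 0$, a contradiction. This sequential-compactness argument entirely bypasses the covering bookkeeping. Your chaining would also work (with care about the ball radii relative to the positivity radius of $J$ near the diagonal), but it is more technical than necessary; the lesson is that the continuity of $h$ in $\boo$ is the decisive compactness input, and once you name it the quantitative chaining becomes superfluous.
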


\begin{proof} It is easy to see that the function
  \[h(x)=\int_\oo J(x,y)U(y)\dy\]
is nonnegative and continuous in $\boo$. If $h\equiv 0$, we can easily see that $U\equiv 0$.

Assume that there is $x_0\in\boo$ such that $h(x_0)>0$. Then
the set
 \[ {\cal O}=\{x\in\boo:\, h(x)>0\}\]
is an empty open subset of $\boo$ by the continuity of $h(x)$. We can also observe that $U(x)>0$ in ${\cal O}$.

We prove that ${\cal O}$ is a closed subset of $\boo$. Let $x_l\in{\cal O}$ and $x_l\to \bar x\in\boo$, then $U(x_l)>0$. If $h(\bar x)=0$, then $U=0$ in a neighborhood $B_\sigma(\bar x)\cap\boo$ for some $\sigma>0$ as $J(\bar x,\bar x)>0$ and $J(\bar x,y)$ is continuous in $y\in\boo$.  There exist $\tau >0$ small and $l$ large enough such that $B_{\tau}(x_l)\subset B_{\sigma}(\bar{x})$ and $B_{\tau}(x_l)\cap\boo\subset{\cal O}$. Hence, $U(x)>0$ for all $x\in B_{\tau}(x_l)\cap\boo$. This contradiction yields that $h(\bar x)>0$. Thus, ${\cal O}$ is a closed subset of $\boo$. So ${\cal O}=\boo$, and $U>0$ in $\boo$.

Suppose that $U>0$ in $\oo$ and $\inf_{\oo}U=0$. Then there exist $x_k\in\oo$ and $x_0\in\boo$ such that $x_k\to x_0$ and $U(x_k)\to 0$. Hence, $h(x_k)\to h(x_0)>0$ and $p(x_k)U(x_k)\to 0$. Taking $x=x_k$ in \qq{3.2} and letting $k\to+\yy$ we can get a contradiction.
\end{proof}\vspace{-2mm}

\begin{col}\lbl{c3.1} Let $p_{ik}\in L^\yy(\oo)$ and $(p_{ik}(x))_{n\times n}$ be cooperative, i.e., $p_{ik}(x)\ge 0$ in $\boo$ when $i\not=k$. Assume that $U\in[L^\yy(\oo)]^n$, $U\ge 0$ in $\boo$ and satisfies
 \bess
 d_i\dd\int_\oo J_i(x,y)U_i(y)\dy-d^*_i(x)U_i+\sum_{k=1}^np_{ik}(x)U_k< 0,\;\;\; x\in\ol\oo
 \eess
for all $i \in \sss$. Then $U_i>0$ in $\boo$ and $\inf_{\oo}U_i>0$ for all $i\in\sss$.
\end{col}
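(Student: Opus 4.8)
The plan is to reduce the system to a scalar inequality of the form treated in Lemma \ref{l3.1}, applied to a single suitably chosen component, and then propagate positivity to the other components through the cooperative coupling. First I would observe that since $(p_{ik})$ is cooperative, for each fixed $i$ the terms $\sum_{k\neq i}p_{ik}(x)U_k(x)$ are nonnegative (as $p_{ik}\ge 0$ and $U_k\ge 0$). Hence each component $U_i$ satisfies
\[
d_i\int_\oo J_i(x,y)U_i(y)\dy-d^*_i(x)U_i+p_{ii}(x)U_i(x)\le -\sum_{k\neq i}p_{ik}(x)U_k(x)\le 0,\quad x\in\boo.
\]
Dividing by $d_i>0$, this is exactly an inequality of the form \qq{3.2} with $J=J_i$, $d^*(x)=d^*_i(x)/d_i$ (which is $1$ or $j_i(x)$, consistent with the hypothesis of Lemma \ref{l3.1}), and $p(x)=p_{ii}(x)/d_i\in L^\yy(\oo)$. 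By Lemma \ref{l3.1}, for each $i$ we get the dichotomy: either $U_i\equiv 0$, or $U_i>0$ in $\boo$ with $\inf_\oo U_i>0$.

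Next I would rule out the possibility that some component vanishes identically. Suppose for contradiction that $U_i\equiv 0$ for at least one index but not all; let ${\cal I}=\{i\in\sss:\,U_i\equiv 0\}$, which is nonempty and proper, and ${\cal K}=\sss\setminus{\cal I}$. Fix $i\in{\cal I}$. Since $U_i\equiv 0$, the nonlocal integral term vanishes and the $-d_i^*(x)U_i$ term vanishes, so the strict inequality in the statement reduces to
\[
\sum_{k\in{\cal K}}p_{ik}(x)U_k(x)<0,\quad x\in\boo.
\]
But $p_{ik}\ge 0$ for $k\neq i$ and $U_k\ge 0$, so the left-hand side is nonnegative — a contradiction. (Note the strict inequality ``$<0$'' in the hypothesis is exactly what makes this argument work; a non-strict inequality would only give $p_{ik}U_k\equiv 0$ on ${\cal K}$, which is weaker.) Therefore ${\cal I}=\emptyset$, i.e., no component vanishes identically, and the dichotomy from Lemma \ref{l3.1} forces $U_i>0$ in $\boo$ with $\inf_\oo U_i>0$ for every $i\in\sss$.

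I expect the argument to be essentially routine, with the only subtle point being the careful bookkeeping that the strict inequality is genuinely needed to exclude identically-zero components — this is the step I would flag as the ``main obstacle,'' though it is more a matter of care than of difficulty. It is worth remarking that irreducibility of $(p_{ik}(x))$ is \emph{not} required here: the strict sign in the differential inequality already does the job that irreducibility would otherwise do in a homogeneous equation. One should also double-check that Lemma \ref{l3.1} is stated for $d^*(x)=d$ or $d^*(x)=dj(x)$ while here we have $d^*_i(x)/d_i$ equal to $1$ or $j_i(x)$; this matches after the rescaling, so no additional hypothesis is needed.
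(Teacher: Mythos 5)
Your proof is correct and follows the natural route one expects from the structure of the section: drop the nonnegative cross terms to obtain, for each fixed $i$, a scalar inequality of the form in Lemma \ref{l3.1}, then use the strict inequality to exclude the $U_i\equiv 0$ alternative. The paper does not spell out a proof for this corollary, so there is nothing more specific to compare against, but this is clearly the intended argument.

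Two very small remarks. First, your case split into ${\cal I}$ nonempty and proper is slightly more than needed: if $U_i\equiv 0$ for some $i$ (whether or not other components also vanish), plugging $U_i\equiv 0$ into the $i$-th hypothesis gives $\sum_{k\ne i}p_{ik}(x)U_k(x)<0$, whose left-hand side is $\ge 0$, and that contradiction already covers the case $U\equiv 0$. Second, the rearrangement you wrote with ``$\le$'' is actually strict, i.e.
\[
d_i\int_\oo J_i(x,y)U_i(y)\dy-d^*_i(x)U_i+p_{ii}(x)U_i< -\sum_{k\ne i}p_{ik}(x)U_k\le 0,
\]
so the scalar inequality is itself strict; this gives an even more direct way to rule out $U_i\equiv 0$ (a strict inequality cannot hold at any $x$ if all three left-hand terms vanish). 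Your observation that irreducibility of $(p_{ik})$ is not needed here, in contrast with Theorem \ref{th3.1}, is exactly right: the strict sign plays that role.
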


\begin{theo}\lbl{th3.1}{\rm(Strong maximum principle)} Let $p_{ik}\in{L^{\infty}(\Omega)}$ and $(p_{ik}(x))_{n\times n}$ be cooperative, i.e., $p_{ik}(x)\ge 0$ in $\boo$ when $i\not=k$. Assume that $U\in[L^\yy(\oo)]^n$, $U\ge 0$ in $\boo$ and satisfies
 \bess
d_i\dd\int_\oo J_i(x,y)U_i(y)\dy-d^*_i(x)U_i+\sum_{k=1}^np_{ik}(x)U_k\le 0,\;\;\; x\in\ol\oo
 \eess
for all $i \in \sss$. If there exists a nonzero measure set $\Omega_1\subset\boo$ such that $(p_{ik}(x))_{n\times n}$ is irreducible for all $x\in\Omega_1$, then we have that either $U_i\equiv 0$ in $\boo$ for all $i\in\sss$, or $U_i>0$ in $\boo$ and $\inf_{\oo}U_i>0$ for all $i\in\sss$.
\end{theo}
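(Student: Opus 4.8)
The plan is to bootstrap from the scalar strong maximum principle (Lemma \ref{l3.1}) and the strictly-subsolution corollary (Corollary \ref{c3.1}), using irreducibility to propagate positivity across the components. First I would dispose of the trivial alternative: if $U\equiv 0$ we are done, so assume $U_i\not\equiv 0$ for at least one index. The first step is to show that each individual component satisfies a scalar inequality of the form \qq{3.2}. Indeed, for fixed $i$, cooperativity gives $\sum_{k\ne i}p_{ik}(x)U_k(x)\ge 0$ a.e., so
\[
d_i\int_\oo J_i(x,y)U_i(y)\dy-d_i^*(x)U_i(x)+p_{ii}(x)U_i(x)\le -\sum_{k\ne i}p_{ik}(x)U_k(x)\le 0,\qquad x\in\boo,
\]
and Lemma \ref{l3.1} (applied with $J=J_i$, $d^*=d_i^*$, $p=p_{ii}$, after dividing by $d_i>0$, which only rescales $J_i$ into a kernel with the required positivity on the diagonal) yields, for every $i$, that either $U_i\equiv 0$ or ($U_i>0$ in $\boo$ and $\inf_\oo U_i>0$). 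So the set $\mathcal I=\{i\in\sss:\,U_i\equiv 0\}$ is well-defined, and each $i\notin\mathcal I$ has $\inf_\oo U_i>0$; the goal becomes $\mathcal I=\emptyset$.

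Suppose for contradiction that $\mathcal I\ne\emptyset$ and $\mathcal I\ne\sss$, and set $\mathcal K=\sss\setminus\mathcal I\ne\emptyset$. Fix any $i\in\mathcal I$. Since $U_i\equiv 0$, the $i$-th inequality collapses to
\[
\sum_{k\in\mathcal K}p_{ik}(x)U_k(x)\le -\,d_i\int_\oo J_i(x,y)\cdot 0\,\dy + d_i^*(x)\cdot 0 = 0\quad\text{a.e. }x,
\]
i.e. $\sum_{k\in\mathcal K}p_{ik}(x)U_k(x)\le 0$ a.e. But for $k\in\mathcal K$ we have $p_{ik}\ge 0$ (since $i\ne k$, as $i\in\mathcal I$, $k\in\mathcal K$ are in disjoint sets) and $U_k(x)\ge\inf_\oo U_k>0$, so each summand is $\ge 0$; hence $\sum_{k\in\mathcal K}p_{ik}(x)U_k(x)=0$ a.e., which forces $p_{ik}(x)=0$ a.e. on $\boo$ for every $i\in\mathcal I$, $k\in\mathcal K$. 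In particular $p_{ik}(x)=0$ for all $i\in\mathcal I$, $k\in\mathcal K$ and all $x\in\Omega_1$ (up to a null set, but $\Omega_1$ has positive measure, so at least at some point $x\in\Omega_1$ simultaneously for the finitely many pairs $(i,k)$). This exhibits a splitting $\sss=\mathcal I\cup\mathcal K$ into disjoint nonempty sets with $p_{ik}(x)=0$ for $i\in\mathcal I$, $k\in\mathcal K$, contradicting the irreducibility of $(p_{ik}(x))_{n\times n}$ at that point of $\Omega_1$. Therefore $\mathcal I=\emptyset$ or $\mathcal I=\sss$; the former gives the positive alternative, the latter gives $U\equiv 0$.

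The main obstacle I anticipate is purely measure-theoretic bookkeeping rather than conceptual: the inequalities hold only a.e. in $x$, irreducibility is assumed only on a positive-measure set $\Omega_1$, and one must make sure the finitely many exceptional null sets (one per component for the scalar reduction, one per pair $(i,k)$ for the vanishing of $p_{ik}$) can be removed so that a single point $x\in\Omega_1$ witnesses both $U_k(x)>0$ for $k\in\mathcal K$ and $p_{ik}(x)=0$ for all $i\in\mathcal I,k\in\mathcal K$ — this is fine since a finite union of null sets is null and $|\Omega_1|>0$. A secondary point worth stating carefully is why $\inf_\oo U_i>0$ for $i\notin\mathcal I$ is genuinely needed (not merely $U_i>0$): it is what upgrades "$\sum p_{ik}U_k\le 0$ with nonnegative terms" into "$p_{ik}\equiv 0$", since a strictly positive lower bound on $U_k$ lets us divide. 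Both of these are handled by the structure already provided in Lemma \ref{l3.1}.
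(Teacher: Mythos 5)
Your proposal is correct and follows essentially the same route as the paper's proof: apply the scalar strong maximum principle (Lemma \ref{l3.1}) componentwise, partition the indices into $\mathcal I=\{i:U_i\equiv 0\}$ and its complement, use cooperativity and $\inf_\oo U_k>0$ to force $p_{ik}\equiv 0$ on $\Omega_1$ for the cross-indices, and contradict irreducibility. The measure-theoretic caveats you raise are reasonable but in fact unnecessary here, since the paper takes the differential inequality to hold pointwise for every $x\in\ol\oo$, so the vanishing of $p_{ik}$ is obtained pointwise on all of $\Omega_1$ rather than merely a.e.
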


\begin{proof} For any $i\in\sss$, we have
 \[d_i\dd\int_\oo J_i(x,y)U_i(y)\dy-d^*_i(x)U_i+p_{ii}(x)U_i\le 0,\;\;\; x\in\ol\oo.\]
By Lemma \ref{l3.1}, we have that either $U_i\equiv 0$ in $\boo$, or $U_i>0$ in $\boo$ and $\inf_{\oo}U_i>0$.

Assume that there exists $\bar i \in \sss$ such that $U_{\bar i}\equiv 0$ in $\boo$. Then the set ${\cal I}=\big\{i\in\sss: U_i\equiv 0\;\;\mbox{in}\;\,\boo\big\}\not=\emptyset$. Define ${\cal I}^c=\sss\setminus {\cal I}$. We next prove that ${\cal I}^c$ is an empty set. Suppose that ${\cal I}^c$ is not empty, then $U_k>0$ in $\boo$ for all $k\in {\cal I}^c$, and $U_i\equiv 0$ in $\boo$ for all $i\in {\cal I}$ by Lemma \ref{l3.1} again. We then obtain
 \bess
 0&\ge& d_i\dd\int_\oo J_i(x,y)U_i(y)\dy-d^*_i(x)U_i(x)
+\sum_{k=1}^n p_{ik}(x)U_k(x)\\
 &=&\sum_{k\in {\cal I}^c}p_{ik}(x)U_k(x),\;\;\forall\; i\in {\cal I},\; x\in\Omega_1.
 \eess
This implies that $p_{ik}\equiv 0$ in $\Omega_1$ for all $i\in {\cal I}$, $k\in {\cal I}^c$, which contradicts the fact that $(p_{ik}(x))_{n\times n}$ is irreducible on $\Omega_1$. We thus have that ${\cal I}^c$ is empty. Hence, $U_i\equiv 0$ in $\boo$ for all $i\in\sss$. \end{proof}\vspace{-2mm}

Then we investigate the positivity of non-negative and nontrivial solutions.

\begin{theo}\lbl{th3.2}{\rm(Positivity of non-negative and nontrivial solutions)} Assume that {\bf(H1)}--{\bf(H2)} hold. Let $U\in[L^\yy(\oo)]^n$ and $U\ge 0$ in $\boo$ be a solution of \qq{1.8}. Then either $U_i\equiv 0$ in $\boo$ for all $i\in\sss$, or $U_i>0$ in $\boo$ and $\inf_{\oo}U_i>0$ for all $i\in\sss$.
\end{theo}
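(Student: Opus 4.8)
The plan is to linearize $f$ along the solution $U$ and then quote the strong maximum principle of Theorem~\ref{th3.1}. For each $i\in\sss$ and a.e.\ $x\in\boo$, using $f_i(x,0)=0$ from {\bf(H1)} and the fundamental theorem of calculus applied to the $C^1$ map $s\mapsto f_i(x,sU(x))$, one writes
\[
f_i(x,U(x))=f_i(x,U(x))-f_i(x,0)=\sum_{k=1}^n p_{ik}(x)U_k(x),\qquad
p_{ik}(x):=\int_0^1\pp_{u_k}f_i(x,sU(x))\,{\rm d}s .
\]
Since $U\in[L^\yy(\oo)]^n$ there is $R>0$ with $0\le U_i\le R$ a.e.\ in $\oo$ for all $i$, and $\pp_{u_k}f_i$ is bounded on the compact set $\boo\times[0,R]^n$, so $p_{ik}\in L^\yy(\oo)$; moreover $p_{ik}(x)\ge 0$ for $i\neq k$ because $f$ is cooperative on $\R_+^n$ and $sU(x)\in\R_+^n$, so $(p_{ik}(x))_{n\times n}$ is cooperative. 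With this notation \qq{1.8} becomes
\[
d_i\dd\int_\oo J_i(x,y)U_i(y)\dy-d^*_i(x)U_i+\sum_{k=1}^n p_{ik}(x)U_k=0\le 0,\qquad x\in\boo,\ i\in\sss,
\]
which is precisely the hypothesis of Theorem~\ref{th3.1}, save for the irreducibility condition.

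The one genuinely nontrivial step — and the one I expect to be the main obstacle — is to upgrade the \emph{pointwise} irreducibility of {\bf(H2)} to irreducibility of $(p_{ik}(x))_{n\times n}$ on a set of positive measure. Taking $u=0$ in {\bf(H2)} shows that $(\pp_{u_k}f_i(\tilde x,0))_{n\times n}$ is irreducible, i.e.\ the directed graph on $\sss$ having an arc $i\to k$ (for $i\neq k$) whenever $\pp_{u_k}f_i(\tilde x,0)>0$ is strongly connected. For each of the finitely many such arcs, continuity of $\pp_{u_k}f_i(\cdot,0)$ yields $\delta_{ik}>0$ with $\pp_{u_k}f_i(x,0)>0$ on $B_{\delta_{ik}}(\tilde x)\cap\boo$; and since $s\mapsto\pp_{u_k}f_i(x,sU(x))$ is continuous, nonnegative and strictly positive at $s=0$, it stays positive on some subinterval $[0,\sigma)$, whence $p_{ik}(x)>0$ there. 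Setting $\delta=\min_{(i,k)}\delta_{ik}$ and $\Omega_1=B_\delta(\tilde x)\cap\oo$, the set $\Omega_1$ has positive Lebesgue measure (because $\oo$ is a smooth domain and $\tilde x\in\boo$, even if $\tilde x\in\pp\oo$), and for every $x\in\Omega_1$ the arc set of the support graph of $(p_{ik}(x))_{n\times n}$ contains that of the irreducible matrix $(\pp_{u_k}f_i(\tilde x,0))_{n\times n}$; since adding arcs to a strongly connected digraph keeps it strongly connected, $(p_{ik}(x))_{n\times n}$ is irreducible on $\Omega_1$.

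Finally, applying Theorem~\ref{th3.1} to the cooperative family $(p_{ik}(x))_{n\times n}$ and the positive-measure set $\Omega_1$ gives the dichotomy: either $U_i\equiv 0$ in $\boo$ for all $i\in\sss$, or $U_i>0$ in $\boo$ and $\inf_\oo U_i>0$ for all $i\in\sss$, which is the assertion. Thus the proof rests on two ingredients beyond Theorem~\ref{th3.1}: the linearization identity, which is routine given the $C^1$ regularity in {\bf(H1)}; and the graph-monotonicity argument above, which is the crux and uses both the continuity of $\pp_{u_k}f_i$ and the stability of irreducibility of an off-diagonally nonnegative matrix under enlarging its support.
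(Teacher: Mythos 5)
Your proof is correct and follows the same strategy as the paper: write $f_i(x,U)=\sum_k p_{ik}(x)U_k$ with $p_{ik}(x)=\int_0^1 \partial_{u_k}f_i(x,sU(x))\,{\rm d}s$ via the fundamental theorem of calculus, observe the resulting matrix is cooperative, and invoke the strong maximum principle (Theorem~\ref{th3.1}). Where you are actually more careful than the paper's terse reduction is in the irreducibility step: the paper asserts $p_{ik}\in C(\boo)$ (which is not clear for $U\in[L^\yy(\oo)]^n$ only) and then cites irreducibility at the single point $\tilde x$, while Theorem~\ref{th3.1} needs a positive-measure set. Your argument — taking the $u=0$ slice of {\bf(H2)}, using continuity of $\partial_{u_k}f_i(\cdot,0)$ in $x$ to propagate the support of $(\partial_{u_k}f_i(\tilde x,0))$ to a neighborhood of $\tilde x$ in $\boo$, and using continuity in $u$ together with nonnegativity of the integrand to conclude $p_{ik}>0$ there — correctly supplies the positive-measure set $\Omega_1$ and thus patches the detail the paper leaves implicit.
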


\begin{proof} Set
 \[p_{ik}(x)=\int_0^1\partial_{u_k}f_i(x,s U(x)){\rm d}s.\]
Then $p_{ik}\in C(\boo)$, $p_{ik}(x)\ge 0$ for $i\not =k$, and $(p_{ik}(\tilde x))_{n\times n}$ is irreducible by the assumptions {\bf(H1)} and {\bf(H2)}. Moreover,
 \[f_i(x, U)=f_i(x, U)- f_i(x,0)=\sum_{k=1}^np_{ik}(x)U_k(x),\]
and $U_i$ satisfies
 \bess
0=d_i\dd\int_\oo J_i(x,y)U_i(y)\dy-d^*_i(x)U_i(x)+\sum_{k=1}^np_{ik}(x)U_k(x).
 \eess
The desired conclusions can be deduced by Theorem \ref{th3.1}.
\end{proof}\vspace{-1mm}

Now we use Theorem \ref{th3.2} to derive the uniqueness of bounded positive solutions.\vspace{-1mm}

\begin{theo}\lbl{th3.3}{\rm(Uniqueness of bounded positive solutions)} Assume that {\bf(H1)}--{\bf(H3)} hold. Then \qq{1.8} has at most one bounded positive solution $U\; ($no need for continuity,
just $U\in[L^\yy(\oo)]^n$ and $U\gg 0$ in $\boo)$.
\end{theo}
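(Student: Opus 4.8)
The plan is to prove uniqueness by a sliding/scaling argument exploiting the strict subhomogeneity in \textbf{(H3)} together with the strong maximum principle (Theorem \ref{th3.1} or Theorem \ref{th3.2}). Suppose $U$ and $V$ are two bounded positive solutions of \qq{1.8}, so $U_i,V_i\in L^\yy(\oo)$ and $U\gg0$, $V\gg0$ in $\boo$. Since both are bounded and bounded away from zero (by Theorem \ref{th3.2}, their infima are positive), I would first define
\[
\rho^*=\sup\{\rho>0:\ \rho U\le V \ \text{in}\ \boo\}.
\]
Because $U,V$ are both bounded above and bounded below by positive constants, $\rho^*$ is a well-defined positive real number, and by definition $\rho^* U\le V$ in $\boo$. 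The goal is to show $\rho^*\ge 1$; by symmetry (swapping the roles of $U$ and $V$) this yields $U=V$.

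The key step is to rule out $\rho^*<1$. Assume for contradiction that $\rho^*<1$ and set $W=V-\rho^* U\ge 0$. Using that $U$ and $V$ solve \qq{1.8} and that $f(x,\cdot)$ is cooperative and strictly subhomogeneous, I would compute, for each $i\in\sss$ and $x\in\boo$,
\[
d_i\!\int_\oo J_i(x,y)W_i(y)\dy-d^*_i(x)W_i(x)
= -f_i(x,V)+\rho^* f_i(x,U).
\]
Now $-f_i(x,V)\le -f_i(x,\rho^* U)$ is \emph{not} immediately available, so instead I write $f_i(x,V)\ge f_i(x,\rho^*U)$ fails in general; the correct route is: since $V\ge\rho^*U\gg0$, cooperativity of $f$ gives monotonicity of $f_i$ in the off-diagonal variables, and combined with a mean-value representation $f_i(x,V)-f_i(x,\rho^*U)=\sum_k p_{ik}(x)W_k(x)$ with $p_{ik}=\int_0^1\partial_{u_k}f_i(x,\rho^*U+sW)\,ds$, where $p_{ik}\ge0$ for $i\ne k$ and $(p_{ik}(\tilde x))$ is irreducible by \textbf{(H1)}--\textbf{(H2)}. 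Meanwhile strict subhomogeneity gives $\rho^* f_i(x,U)> f_i(x,\rho^*U)$ componentwise for $x$ where $U\gg0$, i.e.\ everywhere. Combining,
\[
d_i\!\int_\oo J_i(x,y)W_i(y)\dy-d^*_i(x)W_i(x)+\sum_k p_{ik}(x)W_k(x)
= \rho^* f_i(x,U)-f_i(x,\rho^*U) > 0
\]
(using $f_i(x,V)-f_i(x,\rho^*U)=\sum_k p_{ik}W_k$). Thus $W$ satisfies the reversed strict differential inequality of Corollary \ref{c3.1} (or Theorem \ref{th3.1} applied to $-W$... ) — more precisely $-W$ satisfies the hypothesis of the strong maximum principle with strict inequality on the right, forcing either $W\equiv0$ or $\inf_\oo W_i>0$ for all $i$. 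If $W\equiv0$ then $V=\rho^*U$, but plugging back into \qq{1.8} and using strict subhomogeneity at $x=\tilde x$ yields $0=\rho^* f_i(\tilde x,U)-f_i(\tilde x,\rho^*U)>0$, a contradiction. If instead $\inf_\oo W_i>0$ for every $i$, then $V\ge\rho^*U+\delta\mathbf 1\ge(\rho^*+\delta')U$ in $\boo$ for some $\delta'>0$ (again using $U$ bounded above), contradicting the maximality of $\rho^*$. Hence $\rho^*\ge1$, giving $V\ge U$; by symmetry $U\ge V$, so $U=V$.

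The main obstacle I anticipate is the careful handling of the ``strictly subhomogeneous'' hypothesis \textbf{(H3)}, which as stated only gives a strict inequality $f(x,\delta u)>\delta f(x,u)$ in the vector sense (i.e.\ $\ge$ in all components and $\ne$ in at least one) rather than componentwise strict inequality. This means the term $\rho^*f_i(x,U)-f_i(x,\rho^*U)$ may vanish for some indices $i$ and only be positive for others, so the differential inequality for $W$ is strict only in some components and merely $\le 0$ (for $-W$) in the rest. Resolving this requires invoking the \emph{irreducibility} built into Theorem \ref{th3.1}: one applies the strong maximum principle with the mixed system, using that irreducibility of $(p_{ik}(\tilde x))$ couples the strictly-positive components to the others and propagates positivity of $W$ across the whole system and all of $\boo$. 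A secondary technical point is ensuring $\rho^*$ is finite and the supremum defining it is attained, which follows from the two-sided bounds on $U$ and $V$ guaranteed by Theorem \ref{th3.2}; and one must treat separately the degenerate alternative $W\equiv 0$, which is where \textbf{(H3)} is used a second time to close the argument.
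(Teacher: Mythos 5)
Your overall strategy coincides with the paper's: set up a sliding parameter $\rho^*$ (the paper calls it $\bar q$), suppose $\rho^*<1$, derive a differential inequality for $W=V-\rho^*U$, apply the strong maximum principle (Theorem~\ref{th3.1}/Lemma~\ref{l3.1}) to get a dichotomy, and rule out both branches. But the execution contains a sign error in the crucial use of {\bf(H3)} that invalidates the argument as written. By {\bf(H3)}, $f(x,\delta u)>\delta f(x,u)$ for $\delta\in(0,1)$, so with $\delta=\rho^*$ and $u=U$ one gets $f(x,\rho^*U)\ge \rho^*f(x,U)$ componentwise with at least one strict inequality, i.e.\ $\rho^*f_i(x,U)-f_i(x,\rho^*U)\le 0$. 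You repeatedly claim the opposite, ``$\rho^* f_i(x,U)> f_i(x,\rho^*U)$,'' and hence assert that $W$ satisfies
\[
d_i\!\int_\oo J_i(x,y)W_i(y)\dy-d^*_i(x)W_i(x)+\sum_k p_{ik}(x)W_k(x)>0,
\]
which has the wrong sign for the strong maximum principle. This is why you then try to apply Theorem~\ref{th3.1} to $-W$; but $-W\le 0$, so it does not satisfy the nonnegativity hypothesis, and that route is a dead end. With the sign corrected, the inequality is $\le 0$ (with strict inequality at some index $i_x$ for each $x$), which is precisely the hypothesis of Theorem~\ref{th3.1} applied directly to $W\ge 0$, and your remaining steps --- the dichotomy ``$W\equiv 0$ or $\inf_\oo W_i>0$ for all $i$,'' the contradiction in the first case by evaluating at $\tilde x$, and the contradiction with maximality of $\rho^*$ in the second case via the two-sided bounds on $U$ --- then go through and essentially reproduce the paper's proof. (A minor further point: to invoke Theorem~\ref{th3.1} you need irreducibility of $(p_{ik}(x))$ on a set of positive measure, not just at $\tilde x$; the paper deduces this from continuity of $\partial_{u_k}f_i$, giving a neighborhood $\Omega_1$ of $\tilde x$.) So this is a real error that must be fixed, but the fix is clean and the corrected argument matches the paper's.
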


\begin{proof} Let $U,V\in[L^\yy(\oo)]^n$ with $U, V\gg 0$ in $\boo$ be two solutions of \qq{1.8}. By Theorem \ref{th3.2}, $\inf_{\boo}U_i=:\alpha_i>0$ and  $\inf_{\boo}V_i=:\tau_i>0$ for all $i\in\sss$.

As $U,V\in[L^\yy(\oo)]^n$, we can find $0<q\ll 1$ such that $V\ge qU$ in $\boo$.
Hence, the set
 \[\Sigma=\kk\{0<q\le 1: V(x)\ge q U(x),\;\;\forall\;x\in\boo\rr\}\]
is nonempty. So $\bar{q}:=\sup\Sigma$ exists, and $V(x)\ge\bar qU(x)$ in $\boo$. Moreover, there exists $\bar i\in\sss$ such that $\inf_{\oo}(V_{\bar i}(x)-\bar qU_{\bar i}(x))=0$.

If $\bar{q}=1$, then $V(x)\ge U(x)$ in $\overline{\Omega}$.
Assume that $\bar{q}<1$. Since $f$ is strictly subhomogeneous, we have
 \[f(x,\bar{q}U(x))>\bar{q}f(x,U(x)), \;\; x \in \overline{\Omega}.\]
Write $W=(W_1,\cdots,W_n)=V-\bar{q}U$. It then follows that, for any given $x\in\boo$,
 \bess\begin{aligned}
0&=d_i\dd\int_\oo J_i(x,y)W_i(y)\dy-d^*_i(x)W_i(x)+f_i(x, V)-\bar{q}f_i(x,U)\\
	&\geq d_i\dd\int_\oo J_i(x,y)W_i(y)\dy-d^*_i(x)W_i(x)+f_i(x, V)- f_i(x,\bar{q}U)\\
	&=d_i\dd\int_\oo J_i(x,y)W_i(y)\dy-d^*_i(x)W_i(x)+\sum_{k=1}^n g_{ik}(x)W_k(x),
	\end{aligned}\eess
and the strictly inequality holds for some $i_x\in\sss$, where
 \[g_{ik}(x)=\int_0^1\partial_{u_k}f_i\big(x,\bar q\ol U(x)
 +\tau W(x)\big){\rm d}\tau\ge 0.\]
By the assumption {\bf(H2)}, there exist a positive number
 \[M>\max\kk\{\sup_{i\in\sss}\sup_{\Omega} U_i,\;
  \sup_{i \in \sss}\sup_{\Omega} V\rr\}\]
and a nonzero measure set $\Omega_1\subset\boo$, which is a neighborhood of $\tilde{x}$, such that $(\partial_{u_k}f_i(x,u))_{n\times n}$ is irreducible for all $(x,u)\in\Omega_1 \times [0,M]$.
Then $(g_{ik}(x)_{n\times n}$ is irreducible for all $x \in \Omega_1$. It follows from
 \[d_{i_x}\dd\int_\oo J_{i_x}(x,y)W_{i_x}(y)\dy-d^*_{i_x}(x)W_{i_x}(x)+g_{i_xi_x}(x)W_{i_x}(x)<0\]
that $W_{i_x}(x)>0$. So $W_{i_x}(z)>0$ for all $z\in\boo$ by Lemma \ref{l3.1}. On the other hand, since $\inf_{\oo}W_{\bar i}=0$, it deduces that $W_{\bar i}\equiv 0$ in $\boo$ by Lemma \ref{l3.1}. This is a contradiction with Theorem \ref{th3.1}. So $\bar q=1$ and $V\ge U$ in $\overline{\Omega}$.

Similarly, we can prove $U\ge V$ in $\overline{\Omega}$. The proof is complete.
\end{proof}\vspace{-1mm}

In the following we study the continuity of non-negative solutions. Under the conditions {\bf(H1)}--{\bf(H2)}, if $U\in [L^\yy(\oo)]^n$ is a nonnegative solution of \qq{1.8}, then either $U_i\equiv 0$ in $\boo$ for all $i\in\sss$, or $U_i>0$ in $\boo$ and $\inf_{\oo}U_i>0$ for all $i\in\sss$ by Theorem \ref{th3.2}. When the first situation occurs, $U$ is of course continuous. So we only need to discuss the continuity of the positive solution.\vspace{-1mm}

\begin{theo}\lbl{th3.4}{\rm (Continuity)} Assume that {\bf(H1)} and {\bf(H2)} hold. Let $U\in [L^\yy(\oo)]^n$ be a positive solution of \qq{1.8}. For any given $x_0\in\boo$, if the algebraic system
 \bes\begin{cases}
 	 d_i\dd\int_\oo J_i(x_0,y)U_i(y)\dy-d^*_i(x_0)v_i+f_i(x_0,v)=0,\\[1.2mm]
 	 i=1,\cdots,n
 \end{cases} \lbl{3.3} \ees
has at most one positive solution $v$, then $U$ is continuous at $x_0$.\vspace{-1mm}
\end{theo}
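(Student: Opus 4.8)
The plan is to rewrite the integro-differential equation \qq{1.8} as a pointwise algebraic equation with continuous coefficients, and then run a compactness-plus-uniqueness argument at $x_0$. Set
\[ F_i(x):=d_i\int_\oo J_i(x,y)U_i(y)\dy,\qquad x\in\boo,\ i\in\sss. \]
Since $J_i$ is continuous, $\oo$ is bounded and $U_i\in L^\yy(\oo)$, dominated convergence shows that each $F_i$ is continuous on $\boo$; likewise $d^*_i$ is continuous (trivially when $d^*_i\equiv d_i$, and by dominated convergence when $d^*_i=d_ij_i$), while $f_i\in C(\boo\times\R_+^n)$ by the standing hypotheses. As $U$ solves \qq{1.8}, it satisfies
\[ F_i(x)-d^*_i(x)U_i(x)+f_i(x,U(x))=0,\qquad x\in\boo,\ i\in\sss, \]
so for every $x\in\boo$ the vector $U(x)$ is a solution of the algebraic system \qq{3.3} with $x_0$ replaced by $x$. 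Moreover, since {\bf(H1)}--{\bf(H2)} hold and $U$ is a positive solution, Theorem \ref{th3.2} gives $U_i(x)\ge\inf_{\oo}U_i>0$ for all $i\in\sss$ and all $x\in\boo$; in particular $U(x_0)$ is a positive solution of \qq{3.3}, and by hypothesis it is the unique one.

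Next I would take an arbitrary sequence $x_k\to x_0$ in $\boo$ and show $U(x_k)\to U(x_0)$. The sequence $\{U(x_k)\}$ is bounded in $\R^n$, so along any subsequence, relabelled again as $\{x_k\}$, we have $U(x_k)\to v^\ast$ for some $v^\ast\in\R^n$; from $U_i(x_k)\ge\inf_{\oo}U_i>0$ we get $v^\ast\gg 0$. Passing to the limit $k\to\yy$ in
\[ F_i(x_k)-d^*_i(x_k)U_i(x_k)+f_i(x_k,U(x_k))=0 \]
and using the continuity of $F_i$, $d^*_i$, $f_i$ (together with $F_i(x_0)=d_i\int_\oo J_i(x_0,y)U_i(y)\dy$) shows that $v^\ast$ is a positive solution of \qq{3.3}. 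By the uniqueness hypothesis $v^\ast=U(x_0)$. Since the subsequential limit is always $U(x_0)$, the full sequence $U(x_k)$ converges to $U(x_0)$, and as $x_k\to x_0$ was arbitrary, $U$ is continuous at $x_0$.

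I do not anticipate a genuine obstacle in this argument; it is essentially the statement that \qq{1.8} reduces pointwise to the algebraic system \qq{3.3}, whose positive solution is unique at $x_0$. The one step that must not be overlooked is checking that the subsequential limit $v^\ast$ is strictly positive, so that the hypothesis that \qq{3.3} has at most one positive solution can be invoked to conclude $v^\ast=U(x_0)$; this is exactly the place where the lower bound $\inf_{\oo}U_i>0$ furnished by Theorem \ref{th3.2} enters. (If one prefers to read \qq{1.8} in the almost-everywhere sense, the same reasoning applies after choosing the points $x_k$ in the full-measure set where the identity holds.)
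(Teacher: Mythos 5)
Your argument is essentially the same as the paper's: invoke Theorem \ref{th3.2} for the uniform lower bound $\inf_\oo U_i>0$, use dominated convergence to pass the integral term to the limit, extract a convergent subsequence of $\{U(x_l)\}$, and then appeal to the uniqueness hypothesis for \qq{3.3} to identify the subsequential limit with $U(x_0)$. The only difference is cosmetic (you make the subsequence-of-subsequence step and the role of the positivity lower bound a bit more explicit), so the proposal is correct and matches the paper's proof.
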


\begin{proof} First, by Theorem \ref{th3.2}, $\inf_{\oo}U_i>0$ for all $i\in\sss$.  Let $x_0,x_l\in\boo$ and $x_l\to x_0$. As $J_i(x,y)$ is continuous in $x,y\in\boo$ and $U$ is bounded, we have
 \bess
 d_i\dd\int_\oo J_i(x_l,y)U_i(y)\dy\to d_i\dd\int_\oo J_i(x_0,y)U_i(y)\dy
 \eess
by the dominated convergence theorem. On the other hand, there exist a subsequence $\{U(x_{l'})\}$ of $\{U(x_l)\}$ and $\widetilde U$, satisfying $\inf_{\oo}U_i\le\widetilde U_i\le M$ for all $i\in\sss$, such that $\lim_{l'\to+\yy}U(x_{l'})=\widetilde U$. Take $l'\to+\yy$ in
 \[d_i\dd\int_\oo J_i(x_{l'},y)U_i(y)\dy-d^*_i(x_{l'})U_i(x_{l'})+f_i(x_{l'},U(x_{l'}))=0\]
to obtain
  \bes\begin{cases}
 	d_i\dd\int_\oo J_i(x_0,y)U_i(y)\dy-d^*_i(x_0)\widetilde U_i+f_i(x_0,\widetilde U)=0,\\[2mm]
 	i=1,\cdots,n.
 \end{cases}
 \lbl{3.4}\ees
Noticing that $U(x_0)$ is a positive solution of \qq{3.4}. By the uniqueness, $U(x_0)=\widetilde U$, i.e., $U(x_{l'})\to U(x_0)$. This process also illustrates $U(x_l)\to U(x_0)$ as $l\to+\yy$. So, $U$ is continuous at $x_0$.
\end{proof}\vspace{-1mm}

In the rest of this subsection, we handle general situations, which do not require $f(x,u)$ to satisfy any one of the conditions {\bf(H1)}--{\bf(H3)}.\vspace{-.1mm}

\begin{theo}\lbl{th3.5} Let $U\in [L^\yy(\oo)]^n$ be a nonnegative solution of \qq{1.8}. For any given $x_0\in\boo$, if there exist $\rho,\gamma>0$ such that $U_i\ge\gamma$ in $B_\rho(x_0)\cap\boo$ for all $i\in\sss$, and the algebraic system \qq{3.3} has at most one positive solution $v$, then $U$ is continuous at $x_0$.
\end{theo}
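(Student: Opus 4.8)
The plan is to adapt the argument of Theorem \ref{th3.4}, but restricting attention to a ball around $x_0$ so that no global positivity hypothesis on $U$ is needed. First I would fix $x_0\in\boo$ and the constants $\rho,\gamma>0$ with $U_i\ge\gamma$ on $B_\rho(x_0)\cap\boo$ for every $i\in\sss$. Since $U\in[L^\yy(\oo)]^n$, we may also fix $M>0$ with $U_i\le M$ a.e.\ in $\boo$ for all $i$. Take any sequence $x_l\in B_\rho(x_0)\cap\boo$ with $x_l\to x_0$; it suffices to show $U(x_l)\to U(x_0)$. Because $J_i(x,y)$ is continuous on $\boo\times\boo$ and $U_i$ is bounded and measurable, the dominated convergence theorem gives
\[
d_i\int_\oo J_i(x_l,y)U_i(y)\dy\;\longrightarrow\;d_i\int_\oo J_i(x_0,y)U_i(y)\dy,\qquad i\in\sss .
\]
Likewise $d^*_i(x_l)\to d^*_i(x_0)$, using the continuity of $j_i$ in case {\bf(D)} gives the Neumann form.

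Next, the vectors $U(x_l)$ lie in the compact box $\prod_{i}[\gamma,M]$, so a subsequence $U(x_{l'})$ converges to some $\widetilde U$ with $\gamma\le\widetilde U_i\le M$ for all $i$; in particular $\widetilde U\gg 0$. Passing to the limit $l'\to\infty$ in the equation
\[
d_i\int_\oo J_i(x_{l'},y)U_i(y)\dy-d^*_i(x_{l'})U_i(x_{l'})+f_i(x_{l'},U(x_{l'}))=0
\]
and using the continuity of $f_i$ on $\boo\times\mathbb{R}_+^n$ together with the convergences above, we obtain that $\widetilde U$ solves the algebraic system \qq{3.3}. But $U(x_0)$ also solves \qq{3.3} — indeed $x_0\in B_\rho(x_0)\cap\boo$, so $U_i(x_0)\ge\gamma>0$, i.e.\ $U(x_0)\gg 0$ is itself a positive solution. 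By the uniqueness hypothesis on positive solutions of \qq{3.3}, $\widetilde U=U(x_0)$. Since the limit is independent of the subsequence, the whole sequence $U(x_l)$ converges to $U(x_0)$, which proves continuity of $U$ at $x_0$.

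The only delicate point — and the reason the hypothesis $U_i\ge\gamma$ on a full neighborhood is needed rather than just at $x_0$ — is guaranteeing that the subsequential limit $\widetilde U$ is a \emph{positive} solution of \qq{3.3}, so that the uniqueness clause applies; the uniform lower bound $\gamma$ on $B_\rho(x_0)\cap\boo$ is exactly what propagates through the limit to give $\widetilde U_i\ge\gamma>0$. Everything else is the routine dominated-convergence plus compactness argument already used in the proof of Theorem \ref{th3.4}, and no continuity of $U$ away from $x_0$ is invoked, since the integral terms $\int_\oo J_i(x_l,y)U_i(y)\dy$ only see $U$ through integration.
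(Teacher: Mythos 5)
Your proposal is correct and follows exactly the approach the paper intends: it is the proof of Theorem \ref{th3.4} with the global lower bound $\inf_{\oo}U_i>0$ (obtained there from Theorem \ref{th3.2} under {\bf(H1)}--{\bf(H2)}) replaced by the locally assumed bound $U_i\ge\gamma$ on $B_\rho(x_0)\cap\boo$, which is all the argument actually uses since the sequence $x_l$ eventually enters $B_\rho(x_0)$ and the lower bound then passes to the subsequential limit $\widetilde U$. The paper omits the details, citing the similarity to Theorem \ref{th3.4}, and your write-up supplies precisely the intended adaptation.
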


The proof of Theorem \ref{th3.5} is similar to that of Theorem \ref{th3.4}, and we omit the details.

For the case $n=2$ (systems with two components), we have the following theorem.

\begin{theo}\lbl{tA.2} Let $x_0\in\boo$. Suppose $f_i(x_0,0,0)=0$, and $f_1(x_0,0,z)\ge0$ and $f_2(x_0,z,0)\ge 0$ for all $z>0$. Let $U\in [L^\yy(\oo)]^2$ be a nonnegative solution of \qq{1.8} with $n=2$, and set
 \[h_1(x_0)=d_1\dd\int_\oo J_1(x_0,y)U_1(y)\dy,\;\;\;
 h_2(x_0)=d_2\dd\int_\oo J_2(x_0,y)U_2(y)\dy.\]
 Assume that the algebraic system
 \bes
 h_i(x_0)-d^*_i(x_0)v_i+f_i(x_0,v)=0,\; \;\;  i=1,2
 \lbl{3.5}\ees
as well as the algebraic equations
 \bes
  h_1(x_0)-d^*_1(x_0)w_1+f_1(x_0,w_1,0)=0
  \lbl{3.6}\ees
 and
  \bess
  h_2(x_0)-d^*_2(x_0)w_2+f_2(x_0,0,w_2)=0
 \eess
have at most one positive solutions $(v_1, v_2)$, $w_1$ and $w_2$, respectively. Then $U$ is continuous at $x_0$.
\end{theo}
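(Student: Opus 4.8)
\textbf{Proof proposal for Theorem \ref{tA.2}.}

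The plan is to reduce the two-component case to the scalar continuity argument of Theorem \ref{th3.4}/\ref{th3.5}, handling separately the components of $U$ that stay bounded away from zero near $x_0$ and those that may degenerate there. First I would apply Lemma \ref{l3.1} componentwise: writing out the $U_1$-equation as a scalar inequality with the bounded coefficient $p(x)=\partial_{u_1}$-type term coming from $f_1(x,U)/U_1$ (or simply $f_1(x,U)=0$ rearranged), we get that each $U_i$ is either identically zero or satisfies $\inf_\oo U_i>0$. This splits the analysis into a few cases according to which of $U_1$, $U_2$ vanish identically. The cases $U_1\equiv U_2\equiv 0$ (trivially continuous) and $U_1,U_2$ both with positive infimum (handled exactly as in Theorem \ref{th3.4}, using uniqueness of positive solutions of \eqref{3.5}) are immediate. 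The real content is the mixed case, say $U_1\equiv 0$ while $\inf_\oo U_2>0$ (the other mixed case is symmetric, using \eqref{3.6} in place of the $w_2$-equation).

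In that mixed case I would argue as follows. Since $U_1\equiv 0$, it is continuous at $x_0$ with value $0$, so only the continuity of $U_2$ at $x_0$ is at issue. Take $x_l\to x_0$. By the dominated convergence theorem, $h_2(x_l)=d_2\int_\oo J_2(x_l,y)U_2(y)\dy\to h_2(x_0)$ (and likewise $h_1(x_l)\to h_1(x_0)$, though here $h_1(x_0)=d_1\int_\oo J_1(x_0,y)U_1(y)\dy=0$ since $U_1\equiv 0$). Extract a subsequence $x_{l'}$ along which $U_2(x_{l'})\to\widetilde U_2\in[\inf_\oo U_2,M]$. Passing to the limit in the $i=2$ equation of \eqref{1.8} evaluated at $x_{l'}$, using $U_1\equiv 0$, gives
\[
h_2(x_0)-d^*_2(x_0)\widetilde U_2+f_2(x_0,0,\widetilde U_2)=0,
\]
i.e. $\widetilde U_2$ solves the scalar equation for $w_2$. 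Since $U_2(x_0)>0$ also solves it and that equation has at most one positive solution by hypothesis, $\widetilde U_2=U_2(x_0)$. As the limit is independent of the subsequence, $U_2(x_l)\to U_2(x_0)$, so $U$ is continuous at $x_0$.

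The only subtlety — and the step I expect to need the most care — is justifying that in the mixed case the limit equation really reduces to the \emph{single} scalar equation \eqref{3.6} (or its $w_2$ analogue) rather than to the full system \eqref{3.5}: this is exactly where the sign hypotheses $f_1(x_0,0,z)\ge 0$, $f_2(x_0,z,0)\ge 0$ and $f_i(x_0,0,0)=0$ enter, since they are what force a component that is identically zero to be compatible with the frozen limit system and prevent the vanishing component from reappearing in the limit (one checks that $\widetilde U_1=0$ is forced, e.g. by applying Lemma \ref{l3.1} to the limit of the $i=1$ equation, using $h_1(x_0)=0$ and $f_1(x_0,0,\widetilde U_2)\ge0$, together with the fact that a nonnegative solution with $h\equiv 0$ must vanish). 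One must also take care that the degenerate component's equation in the limit does not over-determine $\widetilde U_2$; the sign conditions guarantee consistency. Once this bookkeeping is done, everything else is a routine repetition of the compactness-plus-uniqueness scheme already used in Theorem \ref{th3.4}.
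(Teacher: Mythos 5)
Your case decomposition is not the one the proof actually requires, and the one you propose cannot be established under the hypotheses of Theorem~\ref{tA.2}. You open by applying Lemma~\ref{l3.1} componentwise to get the dichotomy ``$U_i\equiv 0$ on $\boo$ or $\inf_\oo U_i>0$''. But Lemma~\ref{l3.1} needs the $U_i$-equation written as $\int_\oo J_iU_i\dy-d^*_iU_i+p(x)U_i\le 0$ with $p\in L^\infty$, which in turn means bounding $f_i(x,U(x))$ by $p(x)U_i(x)$. Theorem~\ref{tA.2} lives explicitly in the ``general situation'' of the subsection that drops {\bf(H1)}--{\bf(H3)}: there is no cooperativity, no differentiability, no bounded $p$. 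The only structural hypotheses are the pointwise sign conditions at the single point $x_0$, and those do not give you the global scalar dichotomy you want. Even if they did, your case list would still miss the genuine possibility that (absent irreducibility) $U_1$ vanishes on a neighborhood of $x_0$ but is nontrivial elsewhere: such a $U_1$ is neither identically zero on $\boo$ nor has $\inf_\oo U_1>0$.

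The correct decomposition, and the one the paper uses, is not by global positivity of the components but by the values $h_1(x_0)$, $h_2(x_0)$. If $h_i(x_0)=0$, then because $J_i(x_0,x_0)>0$ and $J_i$ is continuous, $U_i\equiv 0$ in a neighborhood $B(x_0)\cap\boo$; since continuity is a local property, that already settles that component. If $h_i(x_0)>0$, the sign hypotheses enter at this step (not where you place them): reading off the $i$-th equation of~\eqref{1.8} at $x_0$ and using $f_1(x_0,0,z)\ge 0$ (resp.\ $f_2(x_0,z,0)\ge 0$) shows $U_i(x_0)>0$, and the same argument applied to the subsequential limit $\widetilde U$ shows $\widetilde U_i>0$; one then concludes via the uniqueness hypothesis on~\eqref{3.5}, or on~\eqref{3.6} and its symmetric counterpart in the mixed case, or trivially when both $h_i(x_0)=0$. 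Your claim that the sign hypotheses serve to ``prevent the vanishing component from reappearing in the limit'' misidentifies their role: in the case $U_1\equiv 0$ (or just $U_1\equiv0$ near $x_0$), $\widetilde U_1=0$ is immediate with no need for any lemma; what the sign conditions actually buy is $h_i(x_0)>0\Rightarrow U_i(x_0)>0$ and $\widetilde U_i>0$, which is exactly what lets you invoke the uniqueness assumptions on the frozen algebraic systems.
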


\begin{proof} It is obvious that $h_1(x_0)\ge 0$ and $h_2(x_0)\ge 0$. When  $h_1(x_0)>0,h_2(x_0)>0$, we have $U_1(x_0)>0$ and $U_2(x_0)>0$ by the first and second equations of \qq{1.8}, respectively. That is, $U(x_0)$ is a positive solution of \qq{3.5}. Take $x_l\in\boo$ such that $x_l\to x_0$. Similar to the arguments in the proof of Theorem \ref{th3.4}, there exists $\widetilde U\ge 0$ such that $U(x_l)\to \widetilde U$ and $\widetilde U$ satisfies \qq{3.5}. This combined with $h_i(x_0)>0$ gives $\widetilde U_i>0$, $i=1,2$, i.e., $\widetilde U$ is a positive solution of \qq{3.5}. By the uniqueness, $\widetilde U=U(x_0)$. Hence $U$ are continuous at $x_0$.

When $h_1(x_0)>0$ and $h_2(x_0)=0$, we have $U_2\equiv 0$ in a neighborhood $B(x_0)$ of $x_0$ since $J_2(x_0,x_0)>0$ and $J_2(x,x)$ is continuous. Therefore, $U_2(x)$ is continuous at $x_0$. Thus $U_1$ satisfies
  \bess
  d_1\dd\int_\oo J_1(x,y)U_1(y)\dy-d^*_1(x)U_1+f_1(x,U_1,0)=0,\; x\in B(x_0)
  \eess
and $U_1(x_0)>0$. By the assumption, \qq{3.6} has at most one positive solution. Similar to the proof of the above case, we can show that $U_1(x)$ is continuous at $x_0$. Similarly, when $h_1(x_0)=0$ and $h_2(x_0)>0$, $U(x)$ is continuous at $x_0$.

When $h_1(x_0)=h_2(x_0)=0$, we have $U_1\equiv 0$ and $U_2\equiv 0$ in a neighborhood of $x_0$. Certainly, $U(x)$ is continuous at $x_0$.
\end{proof}\vspace{-1mm}

We remark that by applying ideas to discuss the continuity of non-negative solutions, we can also establish continuity under a series of tedious conditions for a general system. However, these conditions are overly complicated and difficult to verify, so we will omit them here.\vspace{-1mm}

\begin{col}\lbl{cA.1}{\rm(Scalar equation case)}  $U\in L^\yy(\oo)$ be a nonnegative solution of
 \bess
  d\dd\int_\oo J(x,y)U(y)\dy-d^*(x)U+f(x,U)=0,\; \;x\in\ol\oo.
  \eess
For any given $x_0\in\boo$, if the algebraic equation
 \bess
  d\dd\int_\oo J(x_0,y)U(y)\dy-d^*(x_0)w+f(x_0,w)=0
  \eess
has at most one positive solution $w$, then $U$ is continuous at $x_0$.
\end{col}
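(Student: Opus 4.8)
The plan is to reduce Corollary \ref{cA.1} to Theorem \ref{th3.5} (equivalently, to the argument behind Theorem \ref{th3.4}/\ref{tA.2}) by handling separately the two regimes determined by the nonlocal term at $x_0$. Write $h(x_0)=d\int_\oo J(x_0,y)U(y)\dy$. Since $U\ge 0$, we have $h(x_0)\ge 0$, and the dichotomy is whether $h(x_0)>0$ or $h(x_0)=0$.

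First I would dispose of the degenerate case $h(x_0)=0$. If $h(x_0)=0$ then, because $J(x_0,x_0)>0$ and $J$ is continuous in both variables, $J(x_0,y)>0$ for all $y$ in a neighbourhood of $x_0$; combined with $U\ge 0$ and $h(x_0)=\int_\oo J(x_0,y)U(y)\dy=0$ this forces $U\equiv 0$ on a neighbourhood $B_\sigma(x_0)\cap\boo$ (an $L^\infty$ function that is nonnegative and integrates against a strictly positive kernel to zero must vanish a.e. there; and then the equation itself, read pointwise, gives $U(x)=0$ for those $x$ since $d^*(x)>0$). Hence $U$ is literally constant near $x_0$ and in particular continuous at $x_0$. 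This is the step where one must be slightly careful with the ``a.e. versus pointwise'' distinction, but the equation \eqref{1.8} (valid for every $x\in\boo$) upgrades the a.e. statement to a genuine pointwise one.

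Next, for the main case $h(x_0)>0$: read the scalar equation at $x=x_0$ to get $d^*(x_0)U(x_0)=h(x_0)+f(x_0,U(x_0))$. Here I would want $U(x_0)>0$; this follows because if $U(x_0)=0$ then $f(x_0,0)$ would have to equal $-h(x_0)<0$, and one expects (as in the hypotheses of Theorem \ref{tA.2}) $f(x_0,0)\ge 0$ — more honestly, in the setting of Corollary \ref{cA.1} the relevant fact is simply that $w=U(x_0)$ solves the algebraic equation $h(x_0)-d^*(x_0)w+f(x_0,w)=0$, and if it were the trivial solution $w=0$ we would get $f(x_0,0)=-h(x_0)<0$; in any case $U(x_0)$ is a root of that algebraic equation. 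Now take any sequence $x_l\to x_0$ in $\boo$. By the dominated convergence theorem (using continuity of $J$ in the first variable and boundedness of $U$), $h(x_l)=d\int_\oo J(x_l,y)U(y)\dy\to h(x_0)$. Since $\{U(x_l)\}$ is bounded, pass to a subsequence $x_{l'}$ with $U(x_{l'})\to \widetilde U$; then $d^*(x_{l'})\to d^*(x_0)$ (both $d$ and $dj(x)$ are continuous) and $f(x_{l'},U(x_{l'}))\to f(x_0,\widetilde U)$ by continuity of $f$, so passing to the limit in $h(x_{l'})-d^*(x_{l'})U(x_{l'})+f(x_{l'},U(x_{l'}))=0$ gives $h(x_0)-d^*(x_0)\widetilde U+f(x_0,\widetilde U)=0$. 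Thus $\widetilde U$ solves the algebraic equation at $x_0$; if $\widetilde U>0$ the uniqueness hypothesis forces $\widetilde U=U(x_0)$, and since every subsequence of $\{U(x_l)\}$ has a further subsequence converging to $U(x_0)$, the whole sequence converges, proving continuity.

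The remaining gap — and the step I expect to be the real obstacle — is ruling out $\widetilde U=0$ in the main case, i.e.\ showing the limit along $x_{l'}$ cannot collapse to the trivial root. This is exactly where the strong maximum principle (Lemma \ref{l3.1}) does the work: writing $p(x)=\int_0^1 \partial_U f(x,sU(x))\,\mathrm ds$ is not available without {\bf(H1)}, so instead I would argue directly that $\widetilde U\ge 0$ satisfies $d\int_\oo J(x_0,y)\widetilde U(y')\,\mathrm dy$... — more precisely, note $\widetilde U$ is a nonnegative solution of the limiting algebraic relation with $h(x_0)>0$ fixed, and $d^*(x_0)\widetilde U=h(x_0)+f(x_0,\widetilde U)$; if $\widetilde U=0$ then $f(x_0,0)=-h(x_0)<0$. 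Under the stated hypotheses of the corollary there is no sign condition on $f(x_0,0)$, so strictly speaking one concludes: \emph{either} $U(x_0)$ is the unique positive root (and then $\widetilde U$, being a nonnegative root, equals $U(x_0)$ unless $\widetilde U=0$), \emph{or} $f(x_0,0)=-h(x_0)$ and $0$ is also a root. I would handle this by observing that the set of possible subsequential limits $\widetilde U$ is contained in the (at most two-point) solution set $\{0,\,U(x_0)\}$ of the algebraic equation, and then exclude $0$ via the strong maximum principle applied on a neighbourhood of $x_0$: if $x_{l'}\to x_0$ with $U(x_{l'})\to 0$ while $h(x_0)>0$, then $h(x_{l'})\ge h(x_0)/2>0$ for large $l'$, contradicting $d^*(x_{l'})U(x_{l'})=h(x_{l'})+f(x_{l'},U(x_{l'}))\to 0\ne h(x_0)+f(x_0,0)$ only if $f(x_0,0)\ne -h(x_0)$; in the exceptional case $f(x_0,0)=-h(x_0)$ one shows $U$ vanishes near $x_0$ by the same positivity argument as in the $h(x_0)=0$ case (since then $0$ is the relevant root and $U(x_{l'})\to 0$ forces, via Lemma \ref{l3.1}, $U\equiv 0$ nearby), again giving continuity. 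Collecting the cases completes the proof.
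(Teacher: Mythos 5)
Your overall strategy matches the paper's: pass to a subsequence along which $U(x_{l'})$ converges, use dominated convergence for the nonlocal term and continuity of $d^*$ and $f$ to show the subsequential limit $\widetilde U$ solves the limiting algebraic equation, and then invoke the uniqueness hypothesis. This is exactly the template underlying Theorems~\ref{th3.4}, \ref{th3.5} and \ref{tA.2}, and Corollary~\ref{cA.1} is stated in the paper with no separate proof, so the comparison is really to that template. You also correctly spot the genuine subtlety that ``at most one \emph{positive} root'' leaves the door open for $w=0$ to be a second root, and that this would wreck the uniqueness step.

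The trouble is with how you try to close that door. Two of your patches don't hold up. First, in the $h(x_0)=0$ case you assert that ``the equation itself, read pointwise, gives $U(x)=0$ for those $x$.'' That is not correct: $h(x_0)=0$ together with $J(x_0,\cdot)>0$ near $x_0$ forces $U=0$ only \emph{a.e.}\ near $x_0$, and the pointwise equation at an exceptional $x$ reads $d^*(x)U(x)=h(x)+f(x,U(x))$ where $h(x)$ is determined by $U$ on all of $\Omega$ (it is not zero just because $U$ vanishes a.e.\ near $x_0$), so no pointwise conclusion falls out. Second, for the exceptional sub-case $f(x_0,0)=-h(x_0)$ you appeal to Lemma~\ref{l3.1}, but that lemma is about inequalities of the form $\int JU - d^*U + pU\le 0$; without $f(x,0)=0$ one cannot write $f(x,U)=p(x)U$, so the lemma simply does not apply. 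You even flag this yourself (``writing $p(x)=\dots$ is not available without {\bf(H1)}'') and then reach for the lemma anyway.

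Worth noting: the paper never uses Corollary~\ref{cA.1} in isolation. In its one application (the logistic equation, just before Theorem~\ref{t3.3}) the paper \emph{first} applies Lemma~\ref{l3.1} to get $\inf_{\boo}U>0$, and only then invokes Corollary~\ref{cA.1}. Once $\inf U>0$ is in hand, the whole case analysis you are fighting with evaporates: $h(x_0)>0$ automatically, every subsequential limit satisfies $\widetilde U\ge \inf U>0$, and $U(x_0)>0$, so uniqueness of positive roots applies directly — exactly as in the proof of Theorem~\ref{th3.4}. Read that way, the corollary is a one-paragraph consequence of the machinery and needs none of the exceptional-case gymnastics; as literally stated (no lower bound on $U$, no sign condition on $f(x_0,0)$), the corollary is slightly over-stated, and your instinct that something extra is needed to rule out $\widetilde U=0$ is correct — but your specific repair is not.
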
\vspace{-1mm}

In the following we shall use the upper and lower solutions method to show the existence of positive solutions of \qq{1.8}, and use Theorem \ref{th3.3} to obtain the uniqueness. From now on in this section, we always assume that {\bf(H1)}--{\bf(H3)} hold.

\begin{theo}\label{th3.7}{\rm(The upper and lower solutions method)}  Assume that there exist $\ol U, \ud U\in [C(\boo)]^n$ with $\ol U>\ud U\gg 0$ such that
 \bess
d_i\dd\int_\oo J_i(x,y)\ol U_i(y)\dy-d^*_i(x)\ol U_i
+f_i(x,\ol U)\leq 0,\;\;\; x\in\ol\oo,
 \eess
and
 \bess
d_i\dd\int_\oo J_i(x,y)\ud U_i(y)\dy-d^*_i(x)\ud U_i
+f_i(x,\ud U)\geq 0,\;\;\; x\in\ol\oo
 \eess
for all $i \in \sss$. Then \eqref{1.8} has a unique bounded positive solution $U$, and $U\in[C(\boo)]^n$,  $\ud U\le U\le\ol U$ in $\boo$.
\end{theo}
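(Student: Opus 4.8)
The plan is to build a monotone iteration scheme between the given ordered pair of upper and lower solutions, show that it converges pointwise to a bounded positive solution of \qq{1.8}, and then invoke Theorem \ref{th3.3} for uniqueness. First I would fix a constant $M>0$ large enough that, on the order interval $[\ud U,\ol U]$ (which is bounded in $[L^\yy(\oo)]^n$ since both ends lie in $[C(\boo)]^n$), the map $u\mapsto f_i(x,u)+Mu_i$ is nondecreasing in every component $u_k$ for every $i$; this uses {\bf(H1)} (cooperativity gives monotonicity in $u_k$, $k\neq i$, for free) together with the local Lipschitz bound on $\partial_{u_i}f_i$ coming from the continuity of $\partial_{u_k}f_i$ on the compact set $\boo\times[0,\|\ol U\|_\yy]^n$. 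Adding $Mu_i$ to both sides of \qq{1.8} and of the two differential inequalities converts the equilibrium problem into a fixed point problem for the operator $\mathcal{T}$ defined componentwise by
\[
(\mathcal{T}u)_i(x)=\frac{1}{d_i^*(x)+M}\left(d_i\int_\oo J_i(x,y)u_i(y)\dy+f_i(x,u)+Mu_i\right),
\]
which is well defined and order-preserving on $[\ud U,\ol U]$ (note $d_i^*(x)+M\ge M>0$, and $d_i^*$ is continuous and bounded below by a positive constant under {\bf(D)}).

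Next I would set $U^{(0)}=\ol U$ and $V^{(0)}=\ud U$ and iterate $U^{(m+1)}=\mathcal{T}U^{(m)}$, $V^{(m+1)}=\mathcal{T}V^{(m)}$. The upper/lower solution inequalities give $\mathcal{T}\ol U\le\ol U$ and $\mathcal{T}\ud U\ge\ud U$, and monotonicity of $\mathcal{T}$ then yields, by induction, the nested chain $\ud U\le V^{(m)}\le V^{(m+1)}\le U^{(m+1)}\le U^{(m)}\le\ol U$ for all $m$. Hence for each fixed $x$ the sequences $U^{(m)}(x)$ and $V^{(m)}(x)$ are monotone and bounded, so they converge pointwise to limits $U(x)$ and $V(x)$ with $\ud U\le V\le U\le\ol U$; both limits are measurable and bounded, i.e.\ lie in $[L^\yy(\oo)]^n$. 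To pass to the limit in the integral term I would apply the dominated convergence theorem (the integrand is dominated by $\|J_i\|_\yy\|\ol U\|_\yy$), and in the pointwise term $f_i(x,\cdot)+M(\cdot)_i$ I would use its continuity; this shows $U=\mathcal{T}U$ and $V=\mathcal{T}V$, i.e.\ both $U$ and $V$ solve \qq{1.8}. Since $U\ge\ud U\gg0$ in $\boo$, $U$ is a bounded positive solution (similarly $V$), so existence is established.

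For uniqueness and the asserted regularity: Theorem \ref{th3.3} (whose hypotheses {\bf(H1)}--{\bf(H3)} are in force) gives that \qq{1.8} has at most one bounded positive solution; combined with the previous paragraph this forces $U=V$ and shows $U$ is \emph{the} bounded positive solution, with $\ud U\le U\le\ol U$ in $\boo$. Finally, to get $U\in[C(\boo)]^n$ I would invoke the continuity results of this subsection: by Theorem \ref{th3.2} every component satisfies $\inf_\oo U_i>0$, and at any $x_0\in\boo$ the limiting algebraic system \qq{3.3} is exactly the equilibrium equation of the cooperative, strictly subhomogeneous (by {\bf(H3)}) vector field $v\mapsto f_i(x_0,v)-d_i^*(x_0)v_i+h_i(x_0)$, whose positive solution is unique by the standard sub-homogeneity argument (the same $\bar q$-scaling device used in the proof of Theorem \ref{th3.3}, applied pointwise); hence Theorem \ref{th3.4} applies and $U$ is continuous at every $x_0$. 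The main obstacle is the construction of $\mathcal{T}$ as a genuinely order-preserving operator on the order interval — i.e.\ choosing $M$ correctly and verifying that the upper/lower solution inequalities survive the division by $d_i^*(x)+M$ — together with the justification that the pointwise monotone limit really solves the equation despite the complete absence of compactness/regularity of the nonlocal operator; everything after that is routine.
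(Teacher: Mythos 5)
Your existence and uniqueness steps mirror the paper's exactly: a monotone iteration operator built from a large shift $M$, pointwise monotone convergence of the two iterates to positive solutions $U$ and $V$ with $\ud U\le V\le U\le\ol U$, and then Theorem~\ref{th3.3} to conclude $U=V$ and that this is the only bounded positive solution. Where you diverge is the continuity step, and the paper's route is noticeably slicker: each iterate $\ud U^k$ and $\ol U^k$ is continuous, so the increasing limit $\widetilde U$ is lower semicontinuous and the decreasing limit $\widehat U$ is upper semicontinuous; once Theorem~\ref{th3.3} forces $\widetilde U=\widehat U$, the common function is simultaneously lower and upper semicontinuous and hence continuous, with no further structure needed. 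You instead invoke Theorem~\ref{th3.4} and verify the pointwise algebraic uniqueness hypothesis by a $\bar q$-scaling argument. That does work, but your justification is a bit loose as written: the vector field $v\mapsto h+f(x_0,v)-d^*(x_0)v$ is not strictly subhomogeneous just because $f$ is (the linear term $-d^*(x_0)v$ is homogeneous, and {\bf(H3)} only gives strictness in \emph{some} component). What actually saves the pointwise $\bar q$-argument is that $h_i=d_i\int_\oo J_i(x_0,y)U_i(y)\,{\rm d}y>0$ for every $i$ (which you have, since Theorem~\ref{th3.2} gives $\inf_\oo U_i>0$), so that after scaling one lands on $0\ge(1-\bar q)h_i>0$. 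If you spell that out your route is correct, but it costs you Theorem~\ref{th3.2} and Theorem~\ref{th3.4} and the verification of the algebraic hypothesis, all of which the semicontinuity observation makes unnecessary.
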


\begin{proof} Using a basic iterative scheme, we can construct two sequences $\{\ol U^k\}$ and $\{\ud U^k\}$ satisfying
 \[\underline U\leq\underline U^k\leq\underline U^{k+1}\leq\ol U^{k+1}
 \leq\ol U^k\leq\ol U,\;\;\forall\;k\geq 1,\]
such that
 \[\lim_{k\to+\yy}\underline U^k(x)=\widetilde U(x),\;\;\;\lim_{k\to+\yy}\ol U^k(x)=\widehat U(x),
 \;\;x\in\boo.\]
Moreover, both $\widetilde U$ and $\widehat U$ are positive solutions of \qq{1.8}. Clearly, $\widetilde U\le\widehat U$ in $\boo$. It follows from Theorem \ref{th3.3} that $\widetilde U=\widehat U=:U$ in $\overline{\Omega}$. Certainly, $U\in[C(\boo)]^n$ as $\widetilde U$ and $\widehat U$ are semi-lower and semi-upper continuous, respectively. Making use of Theorem \ref{th3.3} again, we get the uniqueness of bounded positive solutions of \eqref{1.8}.
\end{proof}\vspace{-1mm}

\subsection{The global dynamics of \qq{1.1}---Proof of Theorem \ref{thB}}

In this subsection we prove Theorem \ref{thB}.

As above, we set $b_{ii}(x)= a_{ii}(x)-d_i^*(x)$ and $b_{ik}(x)=a_{ik}(x)$ for $i\not=k$. Then $B(\tilde{x})=(b_{ik}(\tilde{x}))_{n\times n}$ is irreducible.

(1) Assume that $\lm({\mathscr B})>0$ and there exists $\ol U\in [C(\boo)]^n$ with $\ol U\gg 0$ in $\boo$ such that \qq{1.9} holds.

{\it Step 1: The existence and uniqueness of continuous positive solution of \qq{1.8}}. Let $\ud B^\ep(x)$ be the lower control matrix of $B(x)$ as in Theorem \ref{thA}. Then $\lm_p(\ud{\mathscr B}^\ep)\to \lm({\mathscr B})$ as $\ep\to 0^+$, so $\lm_p(\ud{\mathscr B}^\ep)>0$ when $0<\ep\ll1$. Let $\ud\phi$ with $\|\ud\phi\|_{L^\yy(\oo)}=1$ be a positive eigenfunction corresponding to $\lm_p(\ud{\mathscr B}^\ep)$. Set $\ud U=\rho\ud\phi$ with $0<\rho\ll 1$ being determined later. Noticing that
 \bess
 f_i(x,\rho\ud\phi)=f_i(x,\rho\ud\phi)-f_i(x,0)=
 \sum_{ k=1}^n\kk(\int_0^1\partial_{u_k}f_i(x, \tau \rho\ud\phi){\rm d}\tau\rr)\rho\ud\phi_k=:\sum_{k=1}^na_{ik}^\rho(x)\rho\ud\phi_k,
 \eess
and $a_{ik}^\rho\to a_{ik}$ uniformly in $\boo$ as $\rho\to 0$. Then there exists $\rho_0>0$ such that
 \[\lm_p(\ud{\mathscr B}^\ep)\ud\phi_i+\sum_{k=1}^n[a_{ik}^\rho(x)-a_{ik}(x)]\ud\phi_k>0,
 \;\;x\in\boo\]
provided $0<\rho\le\rho_0$ because of $\lm_p(\ud{\mathscr B}^\ep)>0$ and $\ud\phi$ is positive and continuous in $\boo$. Therefore,
 \bess
 &&d_i\dd\int_\oo J_i(x,y)\ud U_i(y)\dy-d^*_i(x)\ud U_i+f_i(x,\ud U)\\
 &=& d_i\dd\int_\oo J_i(x,y)\ud U_i(y)\dy-d^*_i(x)\ud U_i
  +\sum_{k=1}^na_{ik}^\rho(x)\rho\ud\phi_k\\
 &=&\rho d_i\dd\int_\oo J_i(x,y)\ud\phi_i(y)\dy+\rho\sum_{k=1}^n\ud b^\ep_{ik}(x)\ud\phi_k+\rho \sum_{k=1}^n\kk\{(b_{ik}-\ud b_{ik}^\ep)\ud\phi_k
 +(a_{ik}^\rho-a_{ik})\ud\phi_k\rr\}\\
 &\ge&\rho \lm_p(\ud{\mathscr B}^\ep)\ud\phi_i+\rho\sum_{k=1}^n[a_{ik}^\rho(x)
 -a_{ik}(x)]\ud\phi_k\\
 &>&0,\;\;x\in\boo,\; 0<\rho\le\rho_0.
  \eess
Certainly, $\ud U\le\ol U$ in $\boo$ when $0<\rho\ll1$. By Theorem \ref{th3.7}, the system \eqref{1.8} has a unique bounded positive solution $U$, and $U\in[C(\boo)]^n$, $\ud U\le U\le\ol U$ in $\boo$.

{\it Step 2: The proof of \qq{1.10}.} Notice that $u(x,t;u_0)$ is continuous in $\boo\times\mathbb{R}_+$ and $u(x,t;u_0)\gg 0$ in $\boo$ for all $t>0$. Without loss of generality we may assume that $u_0\gg 0$ in $\boo$. Then there exist  $0<\rho\le\rho_0$ and $\gamma >1$ such that
 \[\ud U(x):=\rho\ud\phi(x)\le u_0(x)\leq \gamma \ol U(x),\;\;x\in\boo.\]
As $f$ is strictly subhomogeneous, we have $f(x,\gamma u)<\gamma f(x,u)$ for all $x\in\ol \oo$, $u \gg 0$ and $\gamma>1$. Remind this fact, it is easy to verify that $\gamma\ol U$ still satisfies \qq{1.9}.

On the other hand, by the comparison principle,
 \bes
 u(x,t;\ud U)\le u(x,t;u_0)\le u(x,t;\gamma\ol U),\;\;x\in\boo,\;t\ge 0,
  \lbl{3.7}\ees
and $u(x,t;\ud U)$ and  $u(x,t;\gamma\ol U)$ are strict increasing and decreasing in $t$, respectively. Therefore, the point-wise limits
 \[\lim_{t\to+\yy}u(x,t;\ud U)=\widetilde U(x),\;\;\;\lim_{t\to+\yy}
 u(x,t;\gamma\ol U)=\widehat U(x)\]
exist and are positive solutions of \qq{1.8}; and $\widetilde U$ and $\widehat U$ are semi-lower and semi-upper continuous, respectively. It is deduced by Theorem \ref{th3.3} that $\widetilde U=\widehat U=U$, the unique positive solution of \qq{1.8}. As $U\in[C(\boo)]^n$, recalling the relation \qq{3.7} and using the Dini Theorem, we see that the limit \qq{1.10} holds.

(2) Assume that $\lm({\mathscr B})<0$.

{\it Step 1: The nonexistence of continuous positive solution of \qq{1.8}}. Assume on the contrary that $U\in[C(\boo)]^n$ is a positive solution of \qq{1.8}. Then, for $0<\rho\ll 1$,
\bess
 f_i(x,\rho U)=f_i(x,\rho U)-f_i(x,0)=\sum_{k=1}^n\kk(\int_0^1\partial_{u_k}f_i(x, \tau\rho U){\rm d}\tau\rr)\rho U_k=:\sum_{k=1}^na^\rho_{ik}(x)\rho U_k,
 \eess
and $a_{ik}^\rho\to a_{ik}$ uniformly in $\boo$ as $\rho\to 0$.
Let $B_\rho(x)=(b_{ik}^\rho(x))_{n\times n}$ be a cooperative {\rrr matrix-valued} function with $b_{ik}^\rho(x)=a_{ik}^\rho(x)$ for $i\neq k$ and $b_{ii}^{\rho}=a_{ii}^{\rho}(x)-d^*_i(x)$, and defined ${\mathscr B}_{\rho}$ as \qq{1.5} with $B(x)$ replaced by $B_\rho(x)$. By the continuity, there exists $\rho_0>0$ such that $\lm({\mathscr B}_{\rho})<0$ for all $0<\rho\le\rho_0$. Let $\ol B^\ep_\rho(x)$ be the upper control matrix of $B_\rho(x)$ as in Theorem \ref{thA}. Then $\lm_p(\ol{\mathscr B}^\ep_\rho)<0$ when $0<\ep\ll1$. Besides, as $f$ is strictly subhomogeneous, we have that, by \qq{1.8},
 \bess
0&=&d_i\dd\int_\oo J_i(x,y)\rho U_i(y)\dy-d^*_i(x)\rho U_i+\rho f_i(x,U)\\
&\le &d_i\dd\int_\oo J_i(x,y)\rho U_i(y)\dy-d^*_i(x)\rho U_i+ f_i(x,\rho U)\\
&=&\rho d_i\dd\int_\oo J_i(x,y) U_i(y)\dy-\rho d^*_i(x) U_i
+\rho\sum_{k=1}^na^\rho_{ik}(x) U_k\\
&=&\rho d_i\dd\int_\oo J_i(x,y) U_i(y)\dy+\rho\sum_{k=1}^n b^\rho_{ik}(x) U_k\\
&\le&\rho d_i\dd\int_\oo J_i(x,y)U_i(y)\dy
+\rho\sum_{k=1}^n\bar b^{\rho,\ep}_{ik}(x)U_k,\;\;\; x\in\ol\oo.
 \eess
Since $U(x)$ is positive in $\boo$, it follows that $\lm_p(\ol{\mathscr B}^\ep_\rho)\ge 0$, and we have a contradiction. So, the system \eqref{1.8} has no continuous positive solution.

{\it Step 2: The proof of \qq{1.11}.} Let $\ol B^\ep(x)$ be the upper control matrix of $B(x)$ as in Theorem \ref{thA} with $0<\ep\ll1 $. Since $\lm({\mathscr B})<0$, there exists $0<\ep\ll1$ such that $\lm_p(\ol{\mathscr B}^\ep)<0$. Let $0\ll\bar\phi\in [C(\boo)]^n$, with $\|\bar\phi\|_{L^\yy(\oo)}=1$, be a positive eigenfunction corresponding to $\lm_p(\ol{\mathscr B}^\ep)$, and $0<\rho\ll 1$ be determined later. Set
 \[ a_{ik}^\rho(x)=\int_0^1\partial_{u_k}f_i(x,s\rho\bar\phi(x)){\rm d}s.\]
Then $a_{ik}^\rho\to a_{ik}$ uniformly in $\boo$ as $\rho\to 0$ and
 \bes
-\rho d^*_i(x)\bar\phi_i+f_i(x,\rho\bar\phi)&=&-\rho d^*_i(x)\bar\phi_i+\rho\sum_{k=1}^n a^\rho_{ik}(x)\bar\phi_k(x)\nm\\
 &=&\rho\sum_{k=1}^n\kk\{\bar b^\ep_{ik}(x)+
 [b_{ik}(x)-\bar b^\ep_{ik}(x)]+[a^\rho_{ik}(x)-a_{ik}(x)]\rr\}\bar\phi_k(x)\nm\\
&\le&\rho\sum_{k=1}^n\bar b^\ep_{ik}(x)\bar\phi_k(x)+
\rho\sum_{k=1}^n[a^\rho_{ik}(x)-a_{ik}(x)]\bar\phi_k(x),\;\;x\in\boo
 \lbl{3.8}\ees
 for all $i\in\sss$. Choose $0<\rho_1\ll 1$ such that, for all $0<\rho\le\rho_1$, there holds:
 \bess
 \sum_{k=1}^n[a^\rho_{ik}(x)-a_{ik}(x)]\bar\phi_k(x)<-\frac 12\lm_p(\ol{\mathscr B}^\ep)\bar\phi_i(x)
 \;\;x\in\boo\eess
for all $i\in\sss$. This together with \qq{3.8} gives
 \bes
 &&d_i\dd\int_\oo J_i(x,y)\rho\bar\phi_i(y)\dy-d^*_i(x)\rho\bar\phi_i+f_i(x,\rho\bar\phi)\nm\\
 &<&\rho d_i\dd\int_\oo J_i(x,y)\bar\phi_i(y)\dy+\rho\sum_{k=1}^n\bar b^\ep_{ik}(x)\bar\phi_k(x)-\frac \rho 2\lm_p(\ol{\mathscr B}^\ep)\bar\phi_i(x)\nm\\
&=&\frac1 2\lm_p(\ol{\mathscr B}^\ep)\rho\bar\phi_i(x)<0,\;\;\; x\in\ol\oo
 \lbl{3.9}\ees
for all $i\in\sss$. Set $\sigma=-\frac 12\lm_p(\ol{\mathscr B}^\ep)$ and $v(x,t)=\rho{\rm e}^{-\sigma t}\bar\phi(x)$.  As $\sigma>0$, we have $\rho{\rm e}^{-\sigma t}\le\rho$ for all $t\ge 0$. Of course, by \qq{3.9} we have that $v(x,t)$ satisfies, for all $i\in\sss$,
 \bess
 &&d_i\dd\int_\oo J_i(x,y)v_i(y,t)\dy-d^*_i(x)v_i(x,t)+f_i(x,v(x,t))
 <\frac1 2\lm_p(\ol{\mathscr B}^\ep)v_i(x,t),\;\; x\in\boo,\;t\ge 0,\eess
and so
 \bes
 d_i\dd\int_\oo J_i(x,y)v_i(y,t)\dy-d^*_i(x)v_i(x,t)+f_i(x,v(x,t))-v_{i,t}(x,t)<0,\;\; x\in\boo,\;t\ge 0.\lbl{3.10}\ees

Take $\gamma >1$ such that $u_0(x)\leq \gamma\rho\bar\phi(x)$ in $\boo$.
Then, by the comparison principle,
 \bes
 u(x,t;u_0)\le u(x,t;\gamma \rho\bar\phi),\;\;x\in\boo,\;t\ge 0.
   \lbl{3.11}\ees
Set $\bar u(x,t)= \gamma v(x,t)$. Noticing that $f$ is strictly subhomogeneous, i.e., $f(x,\gamma u)<\gamma f(x,u)$ for all $x\in\boo$, $u \gg 0$ and $\gamma>1$. In view of \qq{3.10}, we see that $\bar u(x,t)$ satisfies
 \bess
 &&d_i\dd\int_\oo J_i(x,y)\bar u_i(y,t)\dy-d^*_i(x)\bar u_i(x,t)+f_i(x,\bar u(x,t))-\bar u_{i,t}(x,t)\nm\\
 &\le&d_i\dd\int_\oo J_i(x,y)\bar u_i(y,t)\dy-d^*_i(x)\bar u_i(x,t)+\gamma f_i(x,v(x,t))-\bar u_{i,t}(x,t)\nm\\
 &=&\gamma\kk(d_i\dd\int_\oo J_i(x,y)v_i(y,t)\dy-d^*_i(x)v_i(x,t)+f_i(x,v(x,t))-v_{i,t}(x,t)\rr)\\
 &<&0,\;\;\;x\in\boo,\;t\ge 0
 \eess
for all $i \in \sss$. Since $\bar u(x,0)=\gamma \rho\bar\phi(x)$, the comparison principle gives $u(x,t;\gamma \rho\bar\phi)\le \gamma \rho{\rm e}^{-\sigma t}\bar\phi(x)$. This combines with \qq{3.11} implies \qq{1.11}.

(3) Assume that $\lm({\mathscr B})=0$ and there exists $\ol U\in [C(\boo)]^n$ with $\ol U\gg 0$ in $\boo$ such that \eqref{1.9} holds.

{\it Step 1: The nonexistence of continuous positive solution of \qq{1.8}}. Assume on the contrary that $U\in[C(\boo)]^n$ is a positive solution of \qq{1.8}. Since $f$ is strongly subhomogeneous, we have $\rho f_i(x,U)\ll f_i(x,\rho U)$ for all $x\in\overline{\Omega}$ and $\rho\in (0,1)$. Fix $\rho_0$ small enough. Then there exists $\eta_0>0$ such that $\rho_0 f_i(x,U)\leq f_i(x,\rho_0 U)-\eta_0\rho_0 U_i$ in $\boo$. Therefore,
  \bes
  \rho f_i(x,U)\leq \frac{\rho}{\rho_0} f_i(x,\rho_0 U)-\eta_0\rho U_i \leq  f_i(x,\rho U)-\eta_0\rho U_i,\;\;\forall\;\rho \in (0,\rho_0).
  \lbl{3.12}\ees
For any given $\delta>0$ small enough, there exists $\rho \in (0,\rho_0)$ small enough such that
 \[f_i(x,\rho U)=f_i(x,\rho U)-f_i(x,0) \leq \rho \sum_{k=1}^n(a_{ik}(x) +\delta) U_k,\;\; x\in\ol\oo.\]
It then follows that
	\bes
0&=&d_i\dd\int_\oo J_i(x,y)\rho U_i(y)\dy-d^*_i(x)\rho U_i+\rho f_i(x,U)\nm\\
&\leq& d_i\dd\int_\oo J_i(x,y)\rho U_i(y)\dy-d^*_i(x)\rho U_i
+f_i(x,\rho U)-\eta_0\rho U_i\nm\\
&\leq&  d_i\dd\int_\oo J_i(x,y)\rho U_i(y)\dy-d^*_i(x)\rho U_i+\rho \sum_{k=1}^n (a_{ik}(x)+\delta) U_k- \eta_0 \rho U_i,\;\; x\in\ol\oo.
	\lbl{3.13}\ees
This indicates that $\lambda(\widetilde{\mathscr B}_{\delta})\geq\eta_0$, where the operator $\widetilde{\mathscr B}_{\delta}$ is defined by \eqref{1.5} with $B(x)$ replaced by $\widetilde{B}_{\delta}(x)=(b_{ik}(x)+\delta)_{n \times n}$. Letting $\delta \rightarrow 0$, we can obtain that $\lm({\mathscr B})\geq\eta_0$. This contradiction indicates that \eqref{1.8} has no positive solution in $[C(\boo)]^n$.

{\it Step 2: The proof of \qq{1.12}}.

We can find a constant $\gamma >1$ such that $u_0\le \gamma\ol U$ in $\boo$. Then, by the comparison principle,
 \bes
 u(x,t; u_0)\le u(x,t; \gamma\ol U),\;\;x\in\boo,\; t\ge 0.
 \lbl{3.14}\ees

(i) \ud{The upper control function of $u(x,t; \gamma\ol U)$}. Let $\ol B^\ep(x)$ be the upper control matrix of $B(x)$ given in Theorem \ref{thA}. We will use $\ol B^\ep(x)$ to construct an upper control problem of $u(x,t; \gamma\ol U)$ to control it. For each $i \in \sss$, set
 \[f^\ep_i(x,u)=[\bar b_{ii}^\ep(x)-b_{ii}(x)]u_i+f_i(x,u).\]
Then $f^\ep(x,u)=(f^\ep_1(x,u),\cdots,f^\ep_n(x,u))$ is also cooperative, strongly subhomogeneous, and
 \[\kk(\partial_{u_k}f^\ep_i(\tilde{x},u)\rr)_{n \times n}\]
is irreducible for all $u \geq 0$.

Consider the following problem
 \bes \label{3.15} \begin{cases}
	d_i\dd\int_\oo J_i(x,y)U^\ep_i(y)\dy-d^*_i(x)U^\ep_i+f^\ep_i(x,U^\ep)=0,&x\in\ol\oo,\\[1.2mm]
	i=1,\cdots,n.
 \end{cases}\ees
Recalling that $0\ll \ol U\in [C(\boo)]^n$ satisfies \qq{1.9}. In consideration of $\gamma>1$ and the strongly subhomogeneity of $f$, there exists $\ep_0>0$ small enough such that
  \[f_i(x,\gamma\ol U)\leq\gamma f_i(x,\ol U)-\ep\gamma\ol U_i,\;\;\forall\; 0<\ep\le\ep_0,\; i\in\sss\]
(refer to the derivation of \qq{3.12}). As $\bar b_{ii}^\ep(x)-b_{ii}(x)\le\ep$
we have
  \[ f^\ep_i(x,\gamma\ol U)\le f_i(x,\gamma\ol U)+\ep\gamma\ol U_i\leq\gamma f_i(x,\ol U),\;\;\forall\; 0<\ep\le\ep_0,\; i\in\sss.\]
This combines with \qq{1.9} yields that, for any $0<\ep<\ep_0$ and $i\in\sss$,
 \bes
0&\geq & d_i\dd\int_\oo J_i(x,y)\gamma\ol U_i(y)\dy-\gamma d^*_i(x)\ol U_i
+\gamma f_i(x,\ol U)\nm\\
&\ge&d_i\dd\int_\oo J_i(x,y)\gamma\ol U_i(y)\dy-\gamma d^*_i(x)\ol U_i
+f^\ep_i(x,\gamma\ol U),\;\; x\in\ol\oo.
  \lbl{3.16}\ees

Owing to
 \bess
 \tilde a^\ep_{ii}(x):&=&\partial_{u_i}f^\ep_i(x,0)=\bar b_{ii}^\ep(x)-b_{ii}(x)
 +\partial_{u_i}f_i(x,0)=\bar b_{ii}^\ep(x)+d_i^*(x),\\
 \tilde a^\ep_{ik}(x):&=&\partial_{u_k}f^\ep_i(x,0)=a_{ik}(x)=b_{ik}(x),
 \;\;k\not=i,
 \eess
we have the following correspondence:
 \[\tilde b^\ep_{ii}(x)=\tilde a_{ii}^\ep(x)-d_i^*(x)=\bar b_{ii}^\ep(x),\;\;
 \tilde b^\ep_{ik}(x)=\tilde a_{ik}^\ep(x)=b_{ik}(x),\;\;k\not=i. \]
Set $\widetilde B^\ep(x)=(\tilde b_{ik}^{\ep}(x))_{n\times n}$. Then $\widetilde B^\ep(x)=\ol B^\ep(x)$, so $\lm_p(\widetilde{\mathscr B}^\ep)=\lm_p(\ol{\mathscr B}^\ep)>\lm({\mathscr B})=0$ by Theorem \ref{thA}.

Taking advantage of \qq{3.16} we have that, by the conclusion (1), the problem \qq{3.15} has a unique positive solution $U^\ep\in [C(\boo)]^n$ satisfying   $U^\ep\le\gamma\ol U$ in $\boo$, and the solution $u^\ep(x,t;\gamma\ol U)$ of
 \bess\begin{cases}
u^\ep_{i,t}=d_i\dd\int_\oo J_i(x,y)u^\ep_i(y,t)\dy-d^*_i(x)u^\ep_i
+f^\ep_i(x,u)=0,&x\in\ol\oo, \; t>0,\\[1mm]
u^\ep_i(x,0)=\gamma\ol U_i(x), &x\in\boo,\\
i=1,\cdots,n
 \end{cases}\eess
satisfies
 \bes
 \lim_{t\to+\yy}u^\ep(x,t;\gamma\ol U)=U^\ep(x) \;\;\text{ uniformly in } \; \overline{\Omega}.\lbl{3.17}\ees
Moreover,  as $f_i\le f^\ep_i$, by the comparison principle we have
 \bes
 u(x,t;\gamma\ol U)\le u^\ep(x,t;\gamma\ol U),\;\;x\in\boo,\; t\ge 0.
 \lbl{3.18}\ees

(ii) \ud{Prove $\lim_{\ep\to 0^+}U^\ep(x)=0$}. Notice that $f^\ep$ is increasing in $\ep>0$, so is $U^\ep$ by the comparison principle. Therefore, the limit
$\lim_{\ep\to 0^+}U^\ep(x)=U(x)\ge 0$
exists and is a nonnegative solution of \qq{1.8}. By Theorem \ref{th3.2},
either $U\equiv 0$ in $\boo$, or $U\gg 0$ in $\boo$ and $\inf_\oo U_i>0$ for all $i\in\sss$.

Now we prove $U\equiv 0$ in $\boo$. If $U\gg 0$ in $\boo$,
then $\theta_i:=\inf_{\boo}U_i(x)>0$ for all $i\in\sss$. It is deduced that
 \[U^\ep_i(x)\ge U_i(x)\ge\theta_i,\;\;\forall\; x\in\boo,\;i\in\sss,\]
where $U^\ep\in [C(\boo)]^n$ is the unique positive solution of \qq{3.15}. As $U^\ep\le\gamma\ol U$ in $\boo$, we can find $\beta_i>0$ such that $\rrr U_i^\ep\le\beta_i$ in $\boo$. Take $\Sigma:=\prod_{i=1}^n[\theta_i,\beta_i]$. Then $U^\ep(x)\in \Sigma$ for all $x\in\boo$. Noticing that $f^\ep(x,u)$ and $f(x,u)$ are strongly subhomogeneous, and $\lim_{\ep\to 0^+}f^\ep(x,u)=f(x,u)$ uniformly in $\boo\times\Sigma$. If we denote $f^0_i=f_i$, then $f^\ep(x,u)$ is continuous in $\ep\ge 0$, $x\in\boo$ and $u\ge 0$; and
  \[f^\ep(x,\rho u)\gg\rho f^\ep(x,u),\;\;\forall\;\ep\ge 0, \; x\in\ol\oo, \; u\gg 0, \; \rho\in (0,1).\]

For the given $0<\rho\ll 1$, the function
 \[h_i(x,w,\ep):=f_i^\ep(x, \rho w)-\rho f_i^\ep(x,w)\]
is continuous and positive in $\boo\times \Sigma\times[0,1]$. There exists $\sigma_i>0$ such that $h_i(x,w,\ep)\ge\sigma_i$ for all $(x,w,\ep)\in\boo\times \Sigma\times[0,1]$. Denote $U^0=U$. Then $U^\ep(x)\in \Sigma$ for all $x\in\boo$ and $0\le\ep\le 1$. So  $h_i(x,U^\ep(x) ,\ep)\ge\sigma_i$ for all $x\in\boo$ and $0\le\ep\le 1$. Hence there exists $\eta_i>0$ such that $h_i(x,U^\ep(x),\ep)\ge\eta_i\rho U^\ep_i(x)$ for all $x\in\boo$ and $0\le\ep\le 1$. Take $\eta_0=\min_{i\in\sss}\eta_i$. Then we have $h_i(x,U^\ep(x),\ep)\ge\eta_0\rho U^\ep_i(x)$ for all $x\in\boo$ and $0\le\ep\le 1$, i.e.,
 \[\rho f_i^\ep(x,U^\ep(x))\leq f_i^\ep(x,\rho U^\ep(x))-\eta_0\rho U^\ep_i(x),\;\;\forall\; x\in\boo,\;0\le\ep\le 1.\]
Using $f^\ep_i$ instead of $f_i$, similar to the discussion in Step 1 we can show that \qq{3.13} holds. It then follows that $\lambda_p(\widetilde {\mathscr B}^\ep_{\delta})\geq\eta_0$ for all $0\le\ep\ll 1$, where the operator $\widetilde{\mathscr B}^\ep_{\delta}$ corresponds to $\widetilde B^\ep_{\delta}(x)=(\bar b^\ep_{ik}(x)+\delta)_{n\times n}$. Letting $\delta\to 0$, we can obtain $\lm_p(\ol{\mathscr B}^\ep)=\lm_p(\widetilde {\mathscr B}^\ep)\geq\eta_0$ for all $0\le\ep\ll 1$. This contradicts with the fact that $\lim_{\ep\to 0^+}\lm_p(\ol{\mathscr B}^\ep)=\lm({\mathscr B})=0$. Therefore, $U\equiv 0$ in $\boo$. This together with \qq{3.17}, \qq{3.18} and \qq{3.14} derives \qq{1.12}. The proof of Theorem \ref{thB} is complete.

\begin{remark} In Theorem {\rm\ref{thB}(1)}, the existence of upper solution $\ol U$ of \qq{1.8} is necessary to obtain the positive solutions of \qq{1.8}.

The role of strict subhomogeneity conditions is manifested in the following aspects:
\begin{enumerate}[$(1)$]\vspace{-2mm}
\item If $\ud U$ is a lower solution of \qq{1.8}, then for any $0<\rho<1$, $\rho\ud U$ is still a lower solution of \qq{1.8}; If $\ol U$ is an upper solution of \qq{1.8}, then for any $\rho>1$, $\rho\ol U$ is still an upper  solution of \qq{1.8}. This provides convenience for constructing appropriate upper and lower solutions and estimating solutions.\vspace{1mm}
\item The strict subhomogeneous condition plays a crucial role in proving the uniqueness of positive equilibrium solutions.\vspace{1mm}
\item Especially, when $\lm({\mathscr B})<0$, the nonexistence of the positive solution can be proved by using strict subhomogeneity. At the same time, the strict subhomogeneous condition plays an important role in constructing the upper solution of the initial value problem by using the positive eigenfunction corresponding to the principle eigenvalue of the control problem.\vspace{1mm}
\item The case $\lm({\mathscr B})=0$ indicates that the zero solution is degenerate, which is a critical case.	Due to the loss of compactness for the solution maps, it is challenging to prove the nonexistence of positive solutions and the stability of the zero solution without additional conditions. Here, we prove these conclusions under a strong subhomogeneous condition.
\end{enumerate}
\end{remark}\vspace{-4mm}

\begin{remark}\lbl{r3.2} In Theorem {\rm\ref{thB}}, it is assumed that $f(x,u)$ is strictly subhomogeneous with respect to all $u\gg 0$, i.e., {\bf(H3)} holds. Especially, in Theorem {\rm\ref{thB}(3)}, it is required that $f$ is strongly subhomogeneous, i.e., $f(x,\rho u) \gg \rho f(x,u)$ for all $x\in\ol\oo$, $u \gg 0$ and $\rho\in (0,1)$. However, some models do not meet these conditions $($for example, the following system \qq{4.16}$)$. Based on the proof of Theorem {\rm\ref{thB}} we can see that these can be weakened.
\begin{enumerate}[$(1)$]\vspace{-2mm}
\item In Theorem {\rm\ref{thB}(1)}, the conditions {\bf(H3)} and \qq{1.9} can be replaced by the following assumptions:
\begin{enumerate}\vspace{-1mm}
\item [{\rm(1a)}] $\ol U$ is a strict upper solution, that is, for each $i\in \sss$,
\[d_i\dd\int_\oo J_i(x,y)\ol U_i(y)\dy-d^*_i(x)\ol U_i
+f_i(x,\ol U)< 0,\;\; x\in\ol\oo,\]
\item[{\rm(1b)}] $f(x,u)$ is subhomogeneous with respect to $u$, that is,
\[f(x,\delta u)\geq\delta f(x,u), \;\;\forall\; x\in\ol\oo, \; u \ge 0, \; \delta\in (0,1),\]
\item[{\rm(1c)}] $f(x,u)$ is strictly subhomogeneous with respect to $u$ with $0\ll u \leq\ol U$, that is,
  \[f(x,\delta u)>\delta f(x,u), \;\;\forall\;0\ll u \le\ol U,\; x\in\ol\oo, \;\delta\in (0,1).\]
 \end{enumerate}
 These show that when \qq{1.8} has a strict upper solution $\ol U$, it only needs $f(x,u)$ to be subhomogeneous with respect to all $u\gg 0$ and strictly subhomogeneous with respect to $0\ll u\le\ol U$, without the need for $f(x,u)$ to be strictly subhomogeneous with respect to all $u\gg 0$.\vspace{1mm}
\item In Theorem {\rm\ref{thB}(2)}, the condition {\bf(H3)} can be replaced by that $f(x,u)$ is subhomogeneous with respect to all $u$.\vspace{1mm}
\item In Theorem {\rm\ref{thB}(3)}, the conditions \qq{1.9} and that $f(x,u)$ is strongly subhomogeneous with respect to all $u\gg 0$ can be replaced by the following assumptions:

\hspace{4mm}The problem \qq{1.8} has a strongly strict upper solution $\ol U$, that is, for each $i \in \sss$,
\[d_i\dd\int_\oo J_i(x,y)\ol U_i(y)\dy-d^*_i(x)\ol U_i
+f_i(x,\ol U) \ll 0,\;\; x\in\ol\oo;\]
 $f(x,u)$ is subhomogeneous with respect to all $u\gg 0$, and $f(x,u)$ is strongly subhomogeneous with respect to $0\ll u\leq\ol U$, i.e., $f(x,\delta u)\gg \delta f(x,u)$ for all $0\ll u \le \ol U,\;x\in\ol\oo$ and $\delta\in(0, 1)$.
\end{enumerate}
\end{remark}\vspace{-2mm}

Before concluding this section, we will present the conclusion regarding the logistic equation \qq{1.3}, as it is a highly anticipated equation. Clearly, the function $u(a(x)-u)$ is strictly subhomogeneous with respect to $u\gg 0$, and a large constant $M>0$ is an upper solution of  \eqref{1.4}. Set $b(x)=a(x)-d^*(x)$. Then Theorem \ref{thB} holds for $g(x)$. Noticing that if $U$ is a nontrivial and nonnegative solution of \qq{1.4}, then $\inf_{\boo}U>0$ by Lemma \ref{l3.1}, and $U\in C(\boo)$ by Corollary \ref{cA.1}. Let $\lm({\mathscr B})$ be the generalized principle eigenvalue of ${\mathscr B}$. We have the following conclusion.\vspace{-1mm}

\begin{theo}\lbl{t3.3}\, Let $u(x,t;u_0)$ be unique solution of \qq{1.3}. \begin{enumerate}\vspace{-2mm}
\item[{\rm(1)}]\, If $\lm({\mathscr B})>0$, then \qq{1.4} has a unique positive solution $U\in C(\boo)$ and $\dd\lim_{t\to+\yy}u(x,t;u_0)=U(x)$ uniformly in $\ol\oo$.\vspace{1mm}
\item[{\rm(2)}]\, If $\lm({\mathscr B})\le 0$, then \qq{1.4} has no positive solution, and $\dd\lim_{t\to+\yy}u(x,t;u_0)=0$ uniformly in $\ol\oo$.
Especially, $u(x,t;u_0)$ converges exponentially to zero when $\lm({\mathscr B})<0$.
  \end{enumerate}\vspace{-2mm}
\end{theo}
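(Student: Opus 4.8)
The plan is to obtain Theorem \ref{t3.3} as the scalar specialization ($n=1$) of Theorem \ref{thB} applied to the reaction term $f(x,u)=u(a(x)-u)$, supplemented by the strong maximum principle (Lemma \ref{l3.1}) and the continuity result (Corollary \ref{cA.1}). First I would verify the structural hypotheses. Since $a\in C(\boo)$ and $a>0$, we have $f\in C(\boo\times\R_+)$, $f(x,0)=0$, and $\pp_u f(x,u)=a(x)-2u$ is continuous; the cooperativity and irreducibility requirements {\bf(H1)}, {\bf(H2)} are automatic for a single equation, since a $1\times1$ matrix cannot be reduced. Moreover $f(x,\delta u)-\delta f(x,u)=\delta(1-\delta)u^2>0$ for $\delta\in(0,1)$ and $u>0$, so $f$ is strictly subhomogeneous; and because for scalars the relation ``$>$'' coincides with ``$\gg$'', $f$ is automatically strongly subhomogeneous as well, so the extra hypothesis needed in Theorem \ref{thB}(3) requires nothing new here.

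Next I would exhibit a constant upper solution of \qq{1.4}. Since $J\ge0$ and $\int_{\R^N}J(x,y)\dy=1$ by {\bf(J)}, we have $\int_\oo J(x,y)\dy\le1$, and $d^*(x)\ge0$; hence for every constant $M\ge d+\max_{\boo}a$,
\[d\dd\int_\oo J(x,y)M\dy-d^*(x)M+M(a(x)-M)\le M\big(d+\max_{\boo}a-M\big)\le0,\]
so $\ol U\equiv M$ satisfies \qq{1.9}. I would also record that, for each $x_0\in\boo$ and each $h:=d\int_\oo J(x_0,y)U(y)\dy\ge0$, the algebraic equation $-w^2+(a(x_0)-d^*(x_0))w+h=0$ has at most one positive root (its root product is $-h\le0$), so Corollary \ref{cA.1} applies: any nontrivial nonnegative $U\in L^\yy(\oo)$ solving \qq{1.4} is continuous, and in fact positive with $\inf_\oo U>0$ by Lemma \ref{l3.1}.

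With these ingredients the three regimes follow directly from Theorem \ref{thB}. If $\lm({\mathscr B})>0$, Theorem \ref{thB}(1) with $\ol U\equiv M$ produces a bounded positive $U\in C(\boo)$ solving \qq{1.4} with $U\le M$ and $u(\cdot,t;u_0)\to U$ uniformly, with uniqueness among bounded positive solutions from Theorem \ref{thB}(1) (equivalently Theorem \ref{th3.3}); together with the continuity upgrade above this gives part (1). If $\lm({\mathscr B})<0$, Theorem \ref{thB}(2) gives nonexistence of a continuous positive solution -- hence, by the upgrade, of any positive solution -- and the exponential decay \qq{1.11}. If $\lm({\mathscr B})=0$, Theorem \ref{thB}(3) (applicable because $f$ is strongly subhomogeneous and $\ol U\equiv M$ satisfies \qq{1.9}) gives nonexistence of a positive solution and the uniform convergence \qq{1.12}; merging the last two cases yields part (2).

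I do not expect a genuine analytic obstacle: the content lies entirely in the machinery of Theorems \ref{thB} and \ref{th3.3} and Corollary \ref{cA.1}. The only points demanding care are the bookkeeping observations that single-equation irreducibility and the coincidence of strict and strong subhomogeneity place the scalar problem under Theorem \ref{thB}(3) with no extra assumption, the explicit choice of the constant upper solution, and the use of Corollary \ref{cA.1} to promote a merely bounded measurable solution of \qq{1.4} to a continuous one so that the uniqueness and nonexistence statements apply to all positive solutions rather than only the continuous ones.
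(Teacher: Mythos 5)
Your proposal is correct and follows the same route as the paper: reduce Theorem \ref{t3.3} to the scalar specialization of Theorem \ref{thB} using a large constant as the upper solution in \eqref{1.9}, note that $u(a(x)-u)$ is strictly (and, being scalar, strongly) subhomogeneous, and promote bounded nonnegative solutions to continuous ones via Lemma \ref{l3.1} and Corollary \ref{cA.1}. The only difference is that you spell out the verifications (the explicit choice $M\ge d+\max_{\boo}a$ and the at-most-one-positive-root computation) that the paper leaves implicit.
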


\section{A West Nile virus model}\lbl{S5}
\setcounter{equation}{0}\label{sec:APP}
 {\setlength\arraycolsep{2pt}

Let $H_u(x,t)$, $H_i(x,t)$, $V_u(x,t)$ and $V_i(x,t)$ be the densities of susceptible birds, infected birds, susceptible mosquitoes, and infected mosquitoes at location $x$ and time $t$, respectively. Then $H(x,t)=H_u(x,t)+H_i(x,t)$ and $V(x,t)=V_u(x,t)+V_i(x,t)$ are respectively the total densities of birds and mosquitoes at location $x$ and time $t$. Recently, the authors of this paper (\cite{WZhang24})  proposed and studied the following West Nile (WN) virus model in the spatiotemporal heterogeneity with general boundary conditions
\bes\left\{\begin{aligned}
&\pp_tH_{u}=\nabla\cdot d_1\nabla H_u+a_1(H_u+H_i)-\mu_1H_u-c_1(H_u+H_i)H_u
-\ell_1H_uV_i,&&x\in\oo,\; t>0,\\
&\pp_tH_{i}=\nabla\cdot d_1\nabla H_i+\ell_1H_uV_i-\mu_1H_i-c_1(H_u+H_i)H_i,\!\!&&x\in\oo,\; t>0,\\
&\pp_tH_{u}=\nabla\cdot d_2\nabla V_u+a_2(V_u+V_i)-\mu_2V_u
-c_2(V_u+V_i)V_u-\ell_2H_iV_u,&&x\in\oo,\; t>0,\\
&\pp_tH_{i}=\nabla\cdot d_2\nabla V_i+\ell_2H_iV_u-\mu_2V_i-c_2(V_u+V_i)V_i,&&x\in\oo,\; t>0,\\
&\alpha_1\dd\frac{\partial H_u}{\partial\nu}+\beta_1H_u=\alpha_1\dd\frac{\partial H_i}{\partial\nu}+\beta_1H_i=0,&&x\in\partial\oo,\; t>0,\\[1mm]
&\alpha_2\dd\frac{\partial V_u}{\partial\nu}+\beta_2V_u=\alpha_2\dd\frac{\partial V_i}{\partial\nu}+\beta_2V_i=0,&&x\in\partial\oo,\; t>0,\\
&\big(H_u, H_i, V_u, V_i\big)=\big(H_{u0}(x), H_{i0}(x), V_{u0}(x), V_{i0}(x)\big),&&x\in\oo,\; t=0,\vspace{-2mm}
   \end{aligned}\rr.\lbl{4.1}\ees
where $\nu$ is the outward normal vector of $\partial\Omega$, all coefficients are functions of $(x,t)$ and $T$-periodic in time $t$; $d_1$ and $d_2$ are the diffusion rates of birds and mosquitoes, $a_1$ and $a_2$ ($\mu_1$ and $\mu_2$)  are the birth (death) rates of susceptible birds and mosquitoes, $c_1$ and $c_2$ are the loss rates of birds and mosquitoes population due to environmental crowding, $\ell_1$ and $\ell_2$ are the transmission rates of uninfected birds and uninfected mosquitoes, respectively; they are all H\"{o}lder continuous, nonnegative and nontrivial; $\mu_k, \ell_k\ge 0$, $d_k, a_k, c_k>0$ in $\overline\oo\times[0, T]$. It has been proved that the problem \qq{4.1} has a positive time periodic solution if and only if birds  and mosquitoes persist and the basic reproduction ratio is greater than one, and moreover the positive time periodic solution is unique and globally asymptotically stable when it exists.

In model \qq{4.1}, it is assumed that birds and mosquitoes exhibit random (local) diffusion. However, because the movements of birds and mosquitoes are mainly through flight, it is more reasonable to use nonlocal diffusion (long-distance diffusion) instead of local diffusion. In this part, we consider the following West Nile virus model with nonlocal dispersals
 \bes\left\{\begin{aligned}
& H_{ut}=d_1\!\dd\int_\oo\! J_1(x,y)H_u (y,t) \dy-d^*_1(x)H_u+a_1(x)(H_u+H_i)\\
 &\hspace{17mm}-\mu_1(x)H_u-c_1(x)(H_u+H_i)H_u
 -\ell_1(x)H_uV_i,&&x\in\oo,\; t>0,\\
 &H_{it}=d_1\!\dd\int_\oo\! J_1(x,y)H_i (y,t) \dy-d^*_1(x) H_i+\ell_1(x)H_uV_i\\
 &\hspace{17mm}-\mu_1(x)H_i-c_1(x)(H_u+H_i)H_i,\!\!&&x\in\oo,\; t>0,\\
 &V_{ut}=d_2\!\dd\int_\oo\! J_2(x,y)V_u (y,t) \dy-d^*_2(x) V_u+a_2(x)(V_u+V_i)\\
 &\hspace{17mm}-\mu_2(x)V_u-c_2(x)(V_u+V_i)V_u-\ell_2(x)H_iV_u,&&x\in\oo,\; t>0,\\
 &V_{it}=d_2\!\dd\int_\oo\! J_2(x,y)V_i (y,t) \dy-d^*_2(x) V_i+\ell_2(x)H_iV_u\\
	&\hspace{17mm}-\mu_2(x)V_i-c_2(x)(V_u+V_i)V_i,&&x\in\oo,\; t>0,\\
	&\big(H_u, H_i, V_u, V_i\big)=\big(H_{u0}(x), H_{i0}(x), V_{u0}(x), V_{i0}(x))>0,&&x\in\oo,\; t=0,\vspace{-2mm}
 \end{aligned}\rr.\lbl{4.2}\ees
where $d^*_k(x)$ is defined by the manner {\bf(D)}  and $J_k(x,y)$ satisfies the condition {\bd (J)}, $k=1,2$.

Throughout this section, we always assume that\vspace{-2mm}
 \begin{itemize}
\item[$\bullet$]\; initial functions and all coefficient functions are continuous in $\ol\oo$; and $a_k(x), c_k(x)>0$ and $\mu_k(x), \ell_k(x)\ge 0$ in $\ol\oo$, $k=1,2$. Moreover, there exists $\tilde x\in\boo$ such that $\ell_1(\tilde x)>0, \ell_2(\tilde x)>0$.\vspace{-2mm}
 \end{itemize}

\begin{theo}\lbl{t5.2} The problem \qq{4.2} has a unique global solution $(H_u, H_i, V_u, V_i)$, which is positive and bounded.
\end{theo}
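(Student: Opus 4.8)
The plan is to view \qq{4.2} as an abstract semilinear ODE in $\X:=[C(\boo)]^4$, obtain a unique local‑in‑time solution, promote it to a nonnegative one by exploiting the quasi‑positive structure of the reaction, derive bounds that are \emph{uniform in $t$} by adding the equations in pairs, and finally globalize via the blow‑up alternative.

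\emph{Local well‑posedness.} Writing $U=(H_u,H_i,V_u,V_i)$, the system reads $U_t=\mathcal L[U]+F(x,U)$, where $\mathcal L$ is the linear operator on $\X$ with $(\mathcal L[U])_k=d_k\dd\int_\oo J_k(x,y)U_k(y)\dy-d^*_k(x)U_k$ ($k=1$ for the two bird components, $k=2$ for the two mosquito components), and $F$ is the reaction vector. Since $J_k$ is continuous, $\int_\oo J_k\le1$ and $d^*_k\in C(\boo)$, $\mathcal L$ is bounded on $\X$; and $F$, being polynomial in $U$ with coefficients in $C(\boo)$, is locally Lipschitz on bounded subsets of $\X$. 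By the Picard--Lindel\"{o}f theorem in $\X$, \qq{4.2} has a unique solution $U\in C^1([0,T_{\max});\X)$ on a maximal interval, with the alternative: either $T_{\max}=+\yy$, or $\|U(\cdot,t)\|_\X\to+\yy$ as $t\to T_{\max}^-$.

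\emph{Nonnegativity.} Because of the transmission terms $-\ell_1H_uV_i$ and $-\ell_2H_iV_u$, the four‑component system is \emph{not} cooperative, so the comparison machinery for cooperative systems from Section \ref{sec:TD} is unavailable and one must argue directly from quasi‑positivity: on $\R_+^4$, whenever one entry of the argument vanishes, the corresponding component of $F$ is nonnegative, since $F_{H_u}|_{H_u=0}=a_1H_i$, $F_{H_i}|_{H_i=0}=\ell_1H_uV_i$, $F_{V_u}|_{V_u=0}=a_2V_i$ and $F_{V_i}|_{V_i=0}=\ell_2H_iV_u$. Fixing $T<T_{\max}$, one has $\|U(\cdot,t)\|_\X\le R$ on $[0,T]$; picking $\lambda>0$ larger than a bound for the zero‑order parts of $F$ on $\{|U|\le R\}$, the map $G(x,U):=\mathcal K[U]+F(x,U)+\lambda U$, with $\mathcal K[U]_k=d_k\int_\oo J_k(x,y)U_k(y)\dy\ge0$, sends nonnegative functions bounded by $R$ to nonnegative functions. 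A Picard iteration in the Duhamel formula $U(t)={\rm e}^{-(d^*+\lambda)t}U_0+\int_0^t{\rm e}^{-(d^*+\lambda)(t-s)}G(U(s))\,{\rm d}s$ started from $U_0\ge0$ then preserves nonnegativity, converges to $U$ on a short interval, and, iterated over consecutive intervals, yields $U(\cdot,t)\ge0$ on $[0,T]$; as $T<T_{\max}$ is arbitrary, $U\ge0$ on $[0,T_{\max})$. (A first‑time‑of‑sign‑change argument exploiting $J_k(x,x)>0$, in the spirit of Lemma \ref{l3.1}, is an alternative.)

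\emph{Uniform bounds and globalization.} The key observation is that adding the first two equations of \qq{4.2} makes the terms $\mp\ell_1H_uV_i$ cancel, so the total bird density $H:=H_u+H_i$ solves the scalar nonlocal logistic equation
\[H_t=d_1\dd\int_\oo J_1(x,y)H(y,t)\dy-d_1^*(x)H+(a_1-\mu_1)H-c_1H^2,\qquad x\in\boo,\ 0<t<T_{\max},\]
and, similarly, $V:=V_u+V_i$ solves the analogous equation with data $(d_2,J_2,d_2^*,a_2,\mu_2,c_2)$. Since $H\ge0$ by the previous step, evaluating the equation at a point where $H(\cdot,t)$ attains its maximum $m(t):=\max_{\boo}H(\cdot,t)$ and using $d_1\int_\oo J_1(x,y)H(y,t)\dy\le d_1m(t)$ together with $-d_1^*(x)H\le0$ gives the differential inequality $D^+m(t)\le(d_1+C_1)m(t)-c_1^0m(t)^2$, with $C_1:=\max_{\boo}(a_1-\mu_1)_+$ and $c_1^0:=\min_{\boo}c_1>0$; comparison with the logistic ODE yields $m(t)\le\max\{m(0),(d_1+C_1)/c_1^0\}$ for all $t\in[0,T_{\max})$, so $H$ --- and likewise $V$ --- is bounded uniformly in time. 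Then $0\le H_u,H_i\le H$ and $0\le V_u,V_i\le V$ shows all four components are uniformly bounded on $\boo\times[0,T_{\max})$; hence $\|U(\cdot,t)\|_\X$ cannot blow up, the alternative forces $T_{\max}=+\yy$, and uniqueness is already built in. I expect the nonnegativity step to be the delicate point, precisely because the cross terms destroy cooperativity, so one cannot simply lean on a comparison principle.
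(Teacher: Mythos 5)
Your proposal is correct and follows essentially the same route as the paper: the decisive step in both is the observation that adding the equations in pairs cancels the non-cooperative transmission terms $\mp\ell_1H_uV_i$ and $\mp\ell_2H_iV_u$, so that $H=H_u+H_i$ and $V=V_u+V_i$ solve scalar nonlocal logistic equations that are bounded uniformly in time, from which $T_{\max}=+\yy$ follows by the blow-up alternative. Your peripheral choices — Picard--Lindel\"{o}f in $[C(\boo)]^4$ where the paper cites upper and lower solutions for local existence, and a quasi-positivity/Duhamel iteration where the paper says "maximum principle" — are interchangeable standard devices, and your explicit remark that the four-component system is not cooperative (so one cannot lean on the comparison machinery of Section \ref{sec:TD}) clarifies a point the paper glides past.

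One small gap: the theorem asserts the solution is \emph{positive}, not merely nonnegative, and your argument stops at $U\ge 0$. Having nonnegativity, you should add that each component obeys a linear lower bound of the form $H_{ut}\ge d_1\dd\int_\oo J_1(x,y)H_u(y,t)\dy - C\,H_u$ on bounded time intervals (dropping the nonnegative source coming from the coupling), so the nonlocal strong maximum principle, in the spirit of Lemma \ref{l3.1} using $J_1(x,x)>0$, upgrades nontrivial nonnegative initial data to strict positivity in $\boo$ for all $t>0$; the same argument applies to the other three components.
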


\begin{proof} The proof is standard. For reader's convenience, we provide an outline of the proof. Firstly, the local existence can be proved by the upper and lower solutions method. The positivity and uniqueness are a direct application of the maximum principle. Let $T$ be the maximum existence time of $(H_u, H_i, V_u, V_i\big)$ and set
    \[H=H_u+H_i,\;\;\;V=V_u+V_i.\]
Then $H$ and $V$ satisfy
 \bes\left\{\!\begin{aligned}
 &H_t=d_1\!\dd\int_\oo\! J_1(x,y)H(y,t)\dy+[a_1(x)
 -d_1^*(x)-\mu_1(x)]H-c_1(x)H^2,&&\!x\in\boo,\; 0<t<T,\\
&H(x,0)=H_u(x,0)+H_i(x,0)>0,&&\!\!x\in\boo
	\end{aligned}\rr.\quad\;\;\;\lbl{4.3}\ees
and
 \bes\left\{\begin{aligned}
&V_t=d_2\!\dd\int_\oo\! J_2(x,y)V(y,t)\dy+[a_2(x)
 -d_2^*(x)-\mu_2(x)]V-c_2(x)V^2,&&\!x\in\oo,\; 0<t<T,\\
&V(x,0)=V_u(x,0)+V_i(x,0)>0,&&\!\!x\in\oo,
	\end{aligned}\rr.\quad\;\;\;\lbl{4.4}\ees
respectively. By the maximum principle, $H$ and $V$ exist globally and are bounded, i.e., $T=\infty$. Consequently, $(H_u, H_i, V_u, V_i\big)$ exists globally and is positive and bounded.
\end{proof}\vspace{-1mm}

The corresponding equilibrium problem of \qq{4.2} is
 \bes\left\{\!\begin{aligned}
&d_1\!\int_\oo\! J_1(x,y)\mathsf{H}_u(y)\dy-[d_1^*(x)+\mu_1(x)]\mathsf{H}_u+
a_1(x)(\mathsf{H}_u+\mathsf{H}_i)\\
&\hspace{17mm}-c_1(x)(\mathsf{H}_u+\mathsf{H}_i)\mathsf{H}_u
-\ell_1(x)\mathsf{H}_u\mathsf{V}_i=0,&&\!\!x\in\ol\oo,\\
&d_1\!\int_\oo\! J_1(x,y)\mathsf{H}_i(y)\dy-[d_1^*(x)+\mu_1(x)]\mathsf{H}_i+
\ell_1(x)\mathsf{H}_u\mathsf{V}_i
-c_1(x)(\mathsf{H}_u+\mathsf{H}_i)\mathsf{H}_i=0,&&\!\!x\in\ol\oo,\\
&d_2\!\int_\oo\! J_2(x,y)\mathsf{V}_u(y)\dy-[d_2^*(x)+\mu_2(x)]\mathsf{V}_u+
a_2(x)(\mathsf{V}_u+\mathsf{V}_i)\\
&\hspace{17mm}-c_2(x)(\mathsf{V}_u+\mathsf{V}_i)\mathsf{V}_u
-\ell_2(x)\mathsf{H}_i\mathsf{V}_u=0,&&\!\!x\in\ol\oo,\\
&d_2\!\int_\oo\! J_2(x,y)\mathsf{V}_i(y)\dy-[d_2^*(x)+\mu_2(x)]\mathsf{V}_i
+\ell_2(x)\mathsf{H}_i\mathsf{V}_u
-c_2(x)(\mathsf{V}_u+\mathsf{V}_i)\mathsf{V}_i=0,&&\!\!x\in\ol\oo.
 \end{aligned}\rr.\;\;\lbl{4.5}\ees

Now we state our basic ideas. Let $(\mathsf{H}_u, \mathsf{H}_i, \mathsf{V}_u, \mathsf{V}_i)$ be a nonnegative solution of \qq{4.5}, then $\mathsf{H}=\mathsf{H}_u+\mathsf{H}_i$ and $\mathsf{V}=\mathsf{V}_u+\mathsf{V}_i$ satisfy the following $(4.6_1)$ and $(4.6_2)$, respectively,
 \[d_k\dd\int_\oo J_k(x,y)\mathsf{U}(y)\dy
 +[a_k(x)-d_k^*(x)-\mu_k(x)]\mathsf{U}-c_k(x)\mathsf{U}^2=0,\;\;\;x\in\boo.
 \eqno(4.6_k)\]\setcounter{equation}{6}
Define operators ${\cal G}_k$, $k=1,2$, by
 \bes\label{4.7}
 {\cal G}_k[u]=d_k\dd\int_\oo J_k(x,y)u(y)\dy+g_k(x)u(x),
 \ees
and let $\lm({\cal G}_k)$ be the generalized principle eigenvalue of ${\cal G}_k$, where
 \[g_k(x)=a_k(x)-d_k^*(x)-\mu_k(x),\;\;k=1,2.\]
As $c_k(x)>0$ in $\boo$, we see that the large constant $M$ is an upper solution of $(4.6_k)$. By Theorem \ref{t3.3}, the problem $(4.6_k)$ has a unique positive solution which is globally asymptotically stable when $\lm({\cal G}_k)>0$, while $(4.6_k)$ has no positive solution and $0$ is asymptotically stable when $\lm({\cal G}_k)\le 0$.

In the following we assume that $\lm({\cal G}_k)>0$, $k=1,2$. Then $(4.6_1)$ and $(4.6_2)$  have unique positive solutions $\mathsf{H}$ and $\mathsf{V}$, respectively, and $\mathsf{H}$ and $\mathsf{V}$ are globally asymptotically stable, i.e.,
 \bes
 \lim_{t\to\yy}H(x,t)=\mathsf{H}(x),\;\;\;\lim_{t\to\yy}V(x,t)
 =\mathsf{V}(x)\;\;\;\;\mbox{in}\;\,C(\boo). \lbl{4.8}\ees
Set
 \[b_{11}^*(x,t)=g_1(x)-a_1(x)-c_1(x)H(x,t),\;\;
 b_{22}^*(x,t)=g_2(x)-a_2(x)-c_2(x)V(x,t).\]
Then $(H_i, V_i)$ satisfies
 \bes\left\{\!\begin{aligned}
&H_{it}=d_1\!\int_\oo\! J_1(x,y)H_i(y)\dy+b_{11}^*(x,t)H_i+\ell_1(x)(H(x,t)-H_i)V_i,&&\!\!x\in\ol\oo,\;t>0,\\[0.1mm]
&V_{it}=d_2\!\int_\oo\! J_2(x,y)V_i(y)\dy+b_{22}^*(x,t)V_i+\ell_2(x)(V(x,t)-V_i)H_i,&&\!\!x\in\ol\oo,\; t>0.
 \end{aligned}\rr.\qquad\lbl{4.9}\ees
According to \qq{4.8}, we know that the asymptotic autonomous system of \qq{4.9} is
 \bes\left\{\!\begin{aligned}
&U_t=d_1\!\int_\oo\! J_1(x,y)U(y)\dy+b_{11}(x)U
+\ell_1(x)(\mathsf{H}(x)-U)Z,&&\!\!x\in\ol\oo,\;t>0,\\[0.1mm]
&Z_t=d_2\!\int_\oo\! J_2(x,y)Z(y)\dy+b_{11}(x)Z
+\ell_2(x)(\mathsf{V}(x)-Z)U,&&\!\!x\in\ol\oo,\; t>0,
 \end{aligned}\rr.\qquad\lbl{4.10}\ees
where
 \bess
b_{11}(x)=g_1(x)-a_1(x)-c_1(x)\mathsf{H}(x),\;\;
 b_{22}(x)=g_2(x)-a_2(x)-c_2(x)\mathsf{V}(x)].
 \eess
The corresponding equilibrium problem of \qq{4.10} is
 \bes\left\{\!\begin{aligned}
&d_1\!\int_\oo\! J_1(x,y)\mathsf{H}_i(y)\dy+
b_{11}(x)\mathsf{H}_i+\ell_1(x)(\mathsf{H}(x)-\mathsf{H}_i)
\mathsf{V}_i=0,&&\!\!x\in\ol\oo,\\[0.1mm]
&d_2\!\int_\oo\! J_2(x,y)\mathsf{V}_i(y)\dy+b_{22}(x)\mathsf{V}_i
+\ell_2(x)(\mathsf{V}(x)-\mathsf{V}_i)\mathsf{H}_i=0,&&\!\!x\in\ol\oo.
 \end{aligned}\rr.\qquad\lbl{4.11}\ees

The basic ideas in this section are as follows:   Firstly, by use of the upper and lower solutions method we show that when $\lm({\mathscr B})>0$, the problem \qq{4.11} has a unique positive solutions $(\mathsf{H}_i, \mathsf{V}_i)$ and satisfies  $\mathsf{H}_i<\mathsf{H}$, $\mathsf{V}_i<\mathsf{V}$ in $\boo$, which implies that \qq{4.5} has a unique continuous positive solution, and that when $\lm({\mathscr B})\le 0$, the problem \qq{4.11} has no positive solution, and so \qq{4.5} has no positive solution. Secondly, we study the stabilities of non-negative equilibrium solutions.

\subsection{The existence and uniqueness of positive solutions of \qq{4.5}}\lbl{S4.1}
{\setlength\arraycolsep{2pt}\renewcommand {\baselinestretch}{1.3}

We need to emphasize that \qq{4.11} is not a cooperative system within the range of $(\mathsf{H}_i, \mathsf{V}_i)\ge (0,0)$, but only within the range of $(0,0)\le(\mathsf{H}_i, \mathsf{V}_i)\le(\mathsf{H}, \mathsf{V})$. To utilize the abstract conclusion presented earlier, it is necessary to consider an auxiliary system
 \bes\left\{\!\begin{aligned}
&d_1\!\int_\oo\! J_1(x,y)\mathsf{H}_i(y)\dy+b_{11}(x)\mathsf{H}_i+\ell_1(x)(\mathsf{H}(x)-\mathsf{H}_i)^+
\mathsf{V}_i=0,&&\!\!x\in\ol\oo,\\
&d_2\!\int_\oo\! J_2(x,y)\mathsf{V}_i(y)\dy+b_{22}(x)\mathsf{V}_i+\ell_2(x)(\mathsf{V}(x)-\mathsf{V}_i)^+
\mathsf{H}_i=0,&&\!\!x\in\ol\oo,
 \end{aligned}\rr.\qquad\lbl{4.12}\ees
which is a cooperative system within the range of $(\mathsf{H}_i, \mathsf{V}_i)\ge (0,0)$. In fact, we will show that \qq{4.11} and \qq{4.12} are equivalent.

Linearize the equations of \qq{4.11} at $(\mathsf{H}_i, \mathsf{V}_i)=(0,0)$ to obtain an operator ${\mathscr B}=({\mathscr B}_1, {\mathscr B}_2)$:
 \bess\left\{\begin{aligned}
&{\mathscr B}_1[\phi]=d_1\!\int_\oo\!J_1(x,y)\phi_1(y)\dy+b_{11}(x)\phi_1+b_{12}(x)\phi_2,\\[.1mm]
&{\mathscr B}_2[\phi]=d_2\!\int_\oo\! J_2(x,y)\phi_2(y)\dy+b_{21}(x)\phi_1+b_{22}(x)\phi_2,
  \end{aligned}\rr.\eess
where $\phi=(\phi_1,\phi_2)$ and
 \bess
  b_{12}(x)=\ell_1(x)\mathsf{H}(x),\;\;\;
 b_{21}(x)=\ell_2(x)\mathsf{V}(x).
 \eess
Then $B(x):=(b_{ik}(x))_{2\times 2}$ is a cooperative matrix, and $B(\tilde x)$ is irreducible. So Theorem \ref{thA} holds. Let $\lm({\mathscr B})$ be the generalized principle eigenvalue of ${\mathscr B}$.

\begin{theo}\lbl{t5.1} Assume that $\lm({\cal G}_k)>0$, $k=1,2$.\vspace{-2mm}
\begin{enumerate}
\item[{\rm(1)}]\; If $\lm({\mathscr B})>0$, then \qq{4.11} has a unique continuous positive solution $(\mathsf{H}_i, \mathsf{V}_i)$ and satisfies
 \[\mathsf{H}_i<\mathsf{H},\;\;\;\mathsf{V}_i<\mathsf{V},\;\;\;x\in\boo.\]
So \qq{4.5} has a unique continuous positive solution\vspace{-0.2mm}
 \[(\mathsf{H}_u, \mathsf{H}_i, \mathsf{V}_u, \mathsf{V}_i)=(\mathsf{H}-\mathsf{H}_i,\, \mathsf{H}_i,\,  \mathsf{V}-\mathsf{V}_i, \,\mathsf{V}_i).\vspace{-2mm}\]
\item[{\rm(2)}]\;If $\lm({\mathscr B})\le 0$, then \qq{4.11} has no positive solution, and so \qq{4.5} has no positive solution.
 \end{enumerate}
 \end{theo}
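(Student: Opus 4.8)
The strategy is to transfer the problem to the genuinely cooperative auxiliary system \eqref{4.12} and invoke the threshold-dynamics results of Theorem~\ref{thB}, after recording the dictionary linking \eqref{4.5}, \eqref{4.11} and \eqref{4.12}. Since $\lm({\cal G}_k)>0$, Theorem~\ref{t3.3} gives unique, globally attractive positive solutions $\mathsf H,\mathsf V\in C(\boo)$ of $(4.6_k)$, and Lemma~\ref{l3.1} gives $\inf_\oo\mathsf H>0$, $\inf_\oo\mathsf V>0$; thus \eqref{4.10}--\eqref{4.12} are well posed, $b_{12}=\ell_1\mathsf H\ge0$, $b_{21}=\ell_2\mathsf V\ge0$, $B(\tilde x)$ is irreducible (as $\ell_1(\tilde x),\ell_2(\tilde x)>0$), and Theorem~\ref{thA} applies to $\mathscr B$. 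I would then prove two equivalences. \emph{(i)} A solution $(\mathsf H_u,\mathsf H_i,\mathsf V_u,\mathsf V_i)\gg0$ of \eqref{4.5} forces $\mathsf H_u+\mathsf H_i=\mathsf H$ and $\mathsf V_u+\mathsf V_i=\mathsf V$ (each sum is a positive solution of $(4.6_k)$), so $(\mathsf H_i,\mathsf V_i)$ solves \eqref{4.11} with $\mathsf H_i<\mathsf H$, $\mathsf V_i<\mathsf V$; conversely such a solution of \eqref{4.11} rebuilds $(\mathsf H-\mathsf H_i,\mathsf H_i,\mathsf V-\mathsf V_i,\mathsf V_i)$ as a positive solution of \eqref{4.5}. \emph{(ii)} Every continuous positive solution of \eqref{4.11} automatically satisfies $\mathsf H_i<\mathsf H$, $\mathsf V_i<\mathsf V$, hence also solves \eqref{4.12}: subtracting \eqref{4.11} from $(4.6_1)$ shows $W=\mathsf H-\mathsf H_i$ satisfies $d_1\int_\oo J_1(x,y)W(y)\dy-d_1^*W+pW=-a_1\mathsf H<0$ with $p=-\mu_1-c_1\mathsf H-\ell_1\mathsf V_i$; since $\mathsf H\gg0$ lies in the kernel of the scalar operator with zeroth-order coefficient $\hat c=a_1-\mu_1-c_1\mathsf H$, that operator has generalized principal eigenvalue $0$ by the Collatz--Wielandt characterization, and by monotonicity in the zeroth-order term plus additivity of constant shifts the operator with $p\le\hat c-\inf_\oo a_1$ has generalized principal eigenvalue $\le-\inf_\oo a_1<0$; a sliding argument (translate $W+t\psi^\ep$ down, with $\psi^\ep\gg0$ the genuine positive eigenfunction of the upper control operator of Theorem~\ref{thA}, until first contact with $0$, then evaluate the equation at the contact point) gives $W\ge0$, and Lemma~\ref{l3.1} upgrades to $W>0$; symmetrically $\mathsf V_i<\mathsf V$.

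For part (1): plugging $(\mathsf H,\mathsf V)$ into \eqref{4.12} and using $(4.6_k)$ gives left-hand sides $-a_1\mathsf H\ll0$ and $-a_2\mathsf V\ll0$, so $(\mathsf H,\mathsf V)$ is a strongly strict upper solution; \eqref{4.12} is cooperative for $u\ge0$, its linearization at $0$ is exactly $\mathscr B$, and its nonlinearity is subhomogeneous for all $u\ge0$ and strictly subhomogeneous for $0\ll u\le(\mathsf H,\mathsf V)$ (the defect in equation $k$ equals $\rho(1-\rho)\ell_k(\cdot)u_1u_2\gneq0$ since $\ell_k(\tilde x)>0$). Hence Theorem~\ref{thB}(1), in the relaxed form of Remark~\ref{r3.2}(1), gives, because $\lm(\mathscr B)>0$, a unique bounded positive solution $(\mathsf H_i,\mathsf V_i)\in[C(\boo)]^2$ of \eqref{4.12} with $(\mathsf H_i,\mathsf V_i)\le(\mathsf H,\mathsf V)$ (the non-smoothness of $r\mapsto r^+$ is immaterial, since all iterates lie in the invariant order interval $[\rho\ud\phi,(\mathsf H,\mathsf V)]$, where it is affine). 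Applying Lemma~\ref{l3.1} to $W=\mathsf H-\mathsf H_i\ge0$, which satisfies $d_1\int_\oo J_1(x,y)W(y)\dy-d_1^*W+pW=-a_1\mathsf H_i\le0$ and is $\not\equiv0$, yields $\mathsf H_i<\mathsf H$, similarly $\mathsf V_i<\mathsf V$, so by (i), $(\mathsf H-\mathsf H_i,\mathsf H_i,\mathsf V-\mathsf V_i,\mathsf V_i)$ is the sought continuous positive solution of \eqref{4.5}. Uniqueness of positive solutions of \eqref{4.11} and of \eqref{4.5} then follows from (i), (ii) and the uniqueness part of Theorem~\ref{thB}(1).

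For part (2): assume $\lm(\mathscr B)\le0$ and, for contradiction, that \eqref{4.11} (equivalently \eqref{4.5}, by (i)) has a continuous positive solution $(\mathsf H_i,\mathsf V_i)$; by (ii) it solves \eqref{4.12}, and $\inf_\oo\mathsf H_i,\inf_\oo\mathsf V_i>0$ by Lemma~\ref{l3.1}. If $\lm(\mathscr B)<0$, Theorem~\ref{thB}(2) via Remark~\ref{r3.2}(2) (which needs only subhomogeneity for all $u$) says \eqref{4.12} has no continuous positive solution, a contradiction. If $\lm(\mathscr B)=0$, from \eqref{4.11} one computes $\mathscr B[(\mathsf H_i,\mathsf V_i)]=(\ell_1\mathsf H_i\mathsf V_i,\ell_2\mathsf H_i\mathsf V_i)\ge(\kappa\ell_1,\kappa\ell_2)$ with $\kappa=\inf_\oo\mathsf H_i\cdot\inf_\oo\mathsf V_i>0$, i.e.\ $(\mathsf H_i,\mathsf V_i)\gg0$ is a strict positive subsolution of $\mathscr B$ with defect strictly positive near $\tilde x$; inserting this, the strict subhomogeneity on $0\ll u\le(\mathsf H,\mathsf V)$, and the irreducibility of $B(\tilde x)$ into the Collatz--Wielandt/perturbation scheme used to prove Theorem~\ref{thB}(3) forces $\lm(\mathscr B)>0$, again a contradiction. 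Hence \eqref{4.11} has no positive solution in $[C(\boo)]^2$ and \eqref{4.5} none in $[C(\boo)]^4$.

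The main obstacle is the critical case $\lm(\mathscr B)=0$ in part (2). Because the subhomogeneity defect of \eqref{4.12} vanishes on $\{\ell_1=0\}$ in the first equation and on $\{\ell_2=0\}$ in the second, neither Theorem~\ref{thB}(3) nor Remark~\ref{r3.2}(3) applies off the shelf; one must rerun the Collatz--Wielandt/perturbation argument with a defect that is positive only near $\tilde x$ and exploit the irreducibility of $B(\tilde x)$ to spread it, i.e.\ prove a strict-monotonicity statement for the generalized principal eigenvalue of an irreducible nonlocal operator under a perturbation localized near a single point. A secondary technical nuisance — the non-$C^1$-ness of $r\mapsto r^+$ in \eqref{4.12} — is handled, as noted, by confining all relevant functions and iterates to an order interval on which that cut-off is affine.
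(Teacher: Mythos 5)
Your part~(1) follows essentially the route the paper itself takes: $(\mathsf{H},\mathsf{V})$ is a strict upper solution of the truncated cooperative system \eqref{4.12}, Theorem~\ref{thB}(1) in the relaxed form of Remark~\ref{r3.2}(1) yields a unique continuous positive solution below $(\mathsf{H},\mathsf{V})$, and the strict inequalities $\mathsf{H}_i<\mathsf{H}$, $\mathsf{V}_i<\mathsf{V}$ follow from Lemma~\ref{l3.1} applied to the differences; the paper merely packages this as the $\sigma=0$ case of a one-parameter family (Lemma~\ref{l4.1}) whose extra parameter $\sigma$ it needs later for Theorem~\ref{th4.3}. Your step~(ii), which independently argues that any continuous positive solution of \eqref{4.11} lies strictly below $(\mathsf{H},\mathsf{V})$ and hence also solves \eqref{4.12}, is more involved than necessary: the paper works with \eqref{4.12} from the outset, gets those bounds directly from the strict upper solution and the comparison principle, and then notes that below $(\mathsf{H},\mathsf{V})$ the two systems coincide.

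The genuine gap is in part~(2) at the critical case $\lm(\mathscr B)=0$, which you yourself leave open. The workaround you sketch (rerunning the Collatz--Wielandt perturbation with a defect localized near $\tilde x$ and spreading it via irreducibility) is precisely the machinery the paper avoids. The paper's argument is one short computation, uniform in $\lm(\mathscr B)\le0$: if $(\mathsf{H}_i,\mathsf{V}_i)\in[C(\boo)]^2$ is a positive solution of \eqref{4.11} (continuity coming from Lemma~\ref{l4.1}(2)), then $\inf_\oo\mathsf{H}_i,\,\inf_\oo\mathsf{V}_i>0$; setting $\rho=\min\{\min_\oo\ell_1\mathsf{V}_i,\,\min_\oo\ell_2\mathsf{H}_i\}$ and bounding $-\ell_1\mathsf{H}_i\mathsf{V}_i\le-\rho\mathsf{H}_i$, $-\ell_2\mathsf{V}_i\mathsf{H}_i\le-\rho\mathsf{V}_i$, one reads off from \eqref{4.11} that $\ol{\mathscr B}^\ep_i[(\mathsf{H}_i,\mathsf{V}_i)]\ge\rho\,(\mathsf{H}_i,\mathsf{V}_i)_i$ for $i=1,2$, so the Collatz--Wielandt formula \eqref{2.2}, tested with $\phi=(\mathsf{H}_i,\mathsf{V}_i)$, gives $\lm_p(\ol{\mathscr B}^\ep)\ge\rho$ for all small $\ep>0$ and hence $\lm(\mathscr B)\ge\rho>0$, a contradiction. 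This single test-function step is what your proposal lacks. One point in your favour: for $\rho>0$ the paper's computation tacitly requires $\ell_1,\ell_2>0$ on all of $\boo$, whereas the standing hypotheses only demand $\ell_k\ge0$ with strict positivity at the single point $\tilde x$; your remark that the subhomogeneity defect dies on $\{\ell_k=0\}$ is thus aimed at a real loose end in the paper's terse argument, even though the repair you gesture at is not carried out.
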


Before giving the proof of Theorem \ref{t5.1}, we first prove the a general conclusion (the following Lemma \ref{l4.1}), which will be used in the sequel. To that end, let's make some preparations.

Let $\ud g_k^\ep$ be the lower control function of $g_k$ given in Theorem \ref{thA}, and $\phi^\ep_k$ be the positive eigenfunction corresponding to $\lm_p(\ud{\cal G}_k^\ep)$ with $\|\phi^\ep_k\|_{L^\infty(\oo)}=1$, $k=1,2$, where $\ud{\cal G}_k^\ep$ is defined by \eqref{4.7} with $g_k$ replaced by $\ud g_k^\ep$. Then $(\lm_p(\ud{\cal G}_k^\ep),\,\phi^\ep_k)$ satisfies
  \[d_k\dd\int_\oo J_k(x,y)\phi^\ep_k(y)\dy+\ud g_k^\ep\phi^\ep_k(x)
  =\lm_p(\ud{\cal G}_k^\ep)\phi^\ep_k(x),\;\;x\in\boo,\;\;k=1,2.
 \eqno(4.13_k)\]
As $\lm({\cal G}_k)>0$, there exists $0<\ep_0\ll 1$ such that{ \setcounter{equation}{13}}
 \bess
 \lm_p(\ud{\cal G}_k^\ep)>0,\;\;\;\forall\, 0<\ep\le\ep_0,\; k=1,2.
 \eess
When $|\sigma|\ll 1$, there hold: $\mathsf{H}+\sigma\phi^\ep_1>0$ and $\mathsf{V}+\sigma\phi^\ep_2>0$ in $\boo$. Set
 \bes\begin{cases}
 b_{11}^\sigma(x)=g_1(x)-a_1(x)-c_1(x)[\mathsf{H}(x)-\sigma\phi^\ep_1(x)],
 \;\;b_{12}^\sigma(x)=\ell_1(x)[\mathsf{H}(x)+\sigma\phi^\ep_1(x)],\\[1mm]
 b_{21}^\sigma(x)=\ell_2(x)[\mathsf{V}(x)+\sigma\phi^\ep_2(x)],\;\; b_{22}^\sigma(x)=g_2(x)-a_2(x)-c_2(x)[\mathsf{V}(x)-\sigma\phi^\ep_2(x)].&
 \end{cases}\lbl{4.14}\ees
Then $b_{11}^\sigma<0$, $b_{22}^\sigma<0$ in $\boo$ and
 \bes
 B_\sigma(x)=(b_{kl}^\sigma(x))_{2\times 2}
  \lbl{4.15}\ees
is a cooperative matrix provided $|\sigma|\ll 1$. Moreover, $B_\sigma(\tilde x)$ is irreducible. Now we provide  the following lemma.

\begin{lem}\lbl{l4.1} Assume that $\lm({\cal G}_k)>0$, $k=1,2$, and $\lm({\mathscr B})>0$.
\begin{enumerate}\vspace{-2mm}
\item[{\rm(1)}]\;There is $0<\sigma_0\ll 1$ such that, when $|\sigma|\le\sigma_0$, the problems
 \bes\left\{\begin{aligned}
&d_1\!\int_\oo\! J_1(x,y)\mathsf{H}_i(y)\dy+b_{11}^\sigma(x)\mathsf{H}_i
+\ell_1(x)\big(\mathsf{H}(x)+\sigma\phi^\ep_1(x)-\mathsf{H}_i\big)^+ \mathsf{V}_i=0,&&\!\!x\in\ol\oo,\\
&d_2\!\int_\oo\! J_2(x,y)\mathsf{V}_i(y)\dy+b_{22}^\sigma(x)\mathsf{V}_i
+\ell_2(x)\big(\mathsf{V}(x)+\sigma\phi^\ep_2(x)-\mathsf{V}_i\big)^+ \mathsf{H}_i=0,&&\!\!x\in\ol\oo,
  \end{aligned}\rr.\quad\;\;\lbl{4.16}\ees
and
  \bes\left\{\begin{aligned}
&d_1\!\int_\oo\! J_1(x,y)\mathsf{H}_i(y)\dy+b_{11}^\sigma(x)\mathsf{H}_i
+\ell_1(x)\big(\mathsf{H}(x)+\sigma\phi^\ep_1(x)-\mathsf{H}_i\big)\mathsf{V}_i=0, &&\!\!x\in\ol\oo,\\
&d_2\!\int_\oo\! J_2(x,y)\mathsf{V}_i(y)\dy+b_{22}^\sigma(x)\mathsf{V}_i
+\ell_2(x)\big(\mathsf{V}(x)+\sigma\phi^\ep_2(x)-\mathsf{V}_i\big) \mathsf{H}_i=0,&&\!\!x\in\ol\oo,
 \end{aligned}\rr.\quad\;\;\lbl{4.17}\ees
have, respectively, unique continuous positive solutions $(\widehat{\mathsf{H}}_i^\sigma,\widehat{\mathsf{V}}_i^\sigma)$ and $(\mathsf{H}_i^\sigma, \mathsf{V}_i^\sigma)$. Moreover,
 \bess
 \widehat{\mathsf{H}}_i^\sigma, \,\mathsf{H}_i^\sigma<\mathsf{H}+\sigma\phi^\ep_1, \;\;\;\widehat{\mathsf{V}}_i^\sigma,\, \mathsf{V}_i^\sigma<\mathsf{V}+\sigma\phi^\ep_2\;\;\;{\rm in}\;\;\boo.
 \eess
Therefore, $(\widehat{\mathsf{H}}_i^\sigma, \widehat{\mathsf{V}}_i^\sigma)
 =(\mathsf{H}_i^\sigma, \mathsf{V}_i^\sigma)$ in $\boo$. This implies that \qq{4.16} and \qq{4.17} are equivalent.\vskip 3pt
 \item[{\rm(2)}] Any bounded nonnegative solution $(\mathsf{H}_i^\sigma, \mathsf{V}_i^\sigma)$ of \qq{4.17} is continuous in $\boo$, and then the bounded positive solution of \qq{4.17} is unique. Any nonnegative solution $(\widehat{\mathsf{H}}_i^\sigma,
    \widehat{\mathsf{V}}_i^\sigma)$ of \qq{4.16} satisfying $(\widehat{\mathsf{H}}_i^\sigma,\widehat{\mathsf{V}}_i^\sigma)
    \le(\mathsf{H}+\sigma\phi^\ep_1,\,\mathsf{V}+\sigma\phi^\ep_2)$ is continuous in $\boo$, and then the positive solution of \qq{4.16} that does not exceed $(\mathsf{H}+\sigma\phi^\ep_1,\mathsf{V}+\sigma\phi^\ep_2)$ is unique.\vspace{-2mm}
\end{enumerate}
\end{lem}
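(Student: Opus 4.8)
The idea is to reduce everything to the globally cooperative problem \eqref{4.16} and to the abstract theory of Section \ref{sec:TD}. Write the reaction of \eqref{4.16} as $f^\sigma=(f^\sigma_1,f^\sigma_2)$ with
$f^\sigma_1(x,u)=[b^\sigma_{11}(x)+d_1^*(x)]u_1+\ell_1(x)\big(\mathsf{H}(x)+\sigma\phi^\ep_1(x)-u_1\big)^+u_2$ and symmetrically for $f^\sigma_2$; then $f^\sigma(x,0)=0$, $f^\sigma$ is Lipschitz and cooperative on all of $\mathbb{R}^2_+$, and its linearization at $0$ is exactly the operator attached to $B_\sigma(x)$ of \eqref{4.15}. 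One checks $f^\sigma$ is subhomogeneous in $u\ge 0$ and strictly subhomogeneous on $\{0\ll u\le\ol U\}$, where $\ol U:=(\mathsf{H}+\sigma\phi^\ep_1,\mathsf{V}+\sigma\phi^\ep_2)$ — precisely the relaxed hypotheses of Remark \ref{r3.2}(1). The one structural condition that genuinely fails is {\bf(H2)}: $(\partial_{u_k}f^\sigma_i(\tilde x,u))$ degenerates once $u_1\ge\mathsf{H}(\tilde x)+\sigma\phi^\ep_1(\tilde x)$. However, {\bf(H2)} enters the proofs of Theorems \ref{th3.2}, \ref{th3.3} and \ref{tA.2} only through the irreducibility of the $\tau$-averaged matrices $\int_0^1(\partial_{u_k}f^\sigma_i(x,\cdot))\,d\tau$ along segments through the origin (or between two solutions), and for \eqref{4.16} these remain irreducible near $\tilde x$ whenever the segment stays in the physical box $\{u\le\ol U\}$, because the averaged factor $\int_0^1(\mathsf{H}+\sigma\phi^\ep_1-(\cdot))^+d\tau$ still collects a positive contribution near the origin end; since all solutions and all terms of the monotone iterations used below stay in $\{u\le\ol U\}$, the conclusions of Theorems \ref{thB}, \ref{th3.7}, \ref{th3.3}, \ref{tA.2} do apply to \eqref{4.16}.

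\emph{Existence for \eqref{4.16}.} First I would verify that $\ol U$ is a strict supersolution of \eqref{4.16} for $|\sigma|$ small: substituting $\ol U$ and using $(4.6_1)$, $(4.6_2)$ for $\mathsf{H},\mathsf{V}$ and $(4.13_1)$, $(4.13_2)$ for $\phi^\ep_1,\phi^\ep_2$, the $\sigma$-free part of the left-hand side collapses to $-a_1(x)\mathsf{H}(x)$ (resp.\ $-a_2(x)\mathsf{V}(x)$), bounded away from $0$ on $\boo$, while the remainder is $O(|\sigma|)$; hence it is $<0$ once $|\sigma|\le\sigma_0$. Next, since $\lm(\mathscr{B}_\sigma)\to\lm(\mathscr{B}_0)=\lm(\mathscr{B})>0$ as $\sigma\to 0$ (continuity of the generalized principal eigenvalue in the matrix, as recalled after Theorem \ref{thA}), for $|\sigma|\le\sigma_0$ the lower control operator $\ud{\mathscr{B}}^\ep_\sigma$ has a positive principal eigenvalue, and a sufficiently small multiple $\ud U$ of its principal eigenfunction is a strict subsolution with $\ud U\gg 0$ and $\ud U\le\ol U$ (this is Step 1 of the proof of Theorem \ref{thB}(1)). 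Theorem \ref{th3.7} (with the relaxations of Remark \ref{r3.2}(1), legitimized in the previous paragraph) then yields a solution $(\widehat{\mathsf{H}}_i^\sigma,\widehat{\mathsf{V}}_i^\sigma)\in[C(\boo)]^2$ of \eqref{4.16} with $\ud U\le(\widehat{\mathsf{H}}_i^\sigma,\widehat{\mathsf{V}}_i^\sigma)\le\ol U$, unique among bounded positive solutions; applying the strong maximum principle (Corollary \ref{c3.1}) to $W:=\ol U-(\widehat{\mathsf{H}}_i^\sigma,\widehat{\mathsf{V}}_i^\sigma)\ge 0$ together with the strict inequality above upgrades this to $\widehat{\mathsf{H}}_i^\sigma<\mathsf{H}+\sigma\phi^\ep_1$, $\widehat{\mathsf{V}}_i^\sigma<\mathsf{V}+\sigma\phi^\ep_2$ in $\boo$.

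\emph{Equivalence, squeeze, and part (2).} Since the truncations $(\cdot)^+$ in \eqref{4.16} are then inactive at $(\widehat{\mathsf{H}}_i^\sigma,\widehat{\mathsf{V}}_i^\sigma)$, this function also solves \eqref{4.17}. For the converse and for uniqueness, let $(\mathsf{H}_i,\mathsf{V}_i)$ be any bounded positive (a priori not continuous) solution of \eqref{4.17}; the strong maximum principle (Corollary \ref{c3.1}, using $b^\sigma_{kk}<0$) gives $\inf_\oo\mathsf{H}_i,\inf_\oo\mathsf{V}_i>0$. The key point is the special structure of \eqref{4.16}: since $f^\sigma_1(x,\Theta_1(x),\cdot)$ with $\Theta_1:=\mathsf{H}+\sigma\phi^\ep_1$ is independent of its last argument, (i) $(\mathsf{H}_i,\mathsf{V}_i)$ is a subsolution of \eqref{4.16} — on $\{\mathsf{H}_i\le\Theta_1\}$ the reactions of \eqref{4.16} and \eqref{4.17} coincide, while on $\{\mathsf{H}_i>\Theta_1\}$ equation \eqref{4.17} forces $d_1\int_\oo J_1\mathsf{H}_i-d_1^*\mathsf{H}_i+f^\sigma_1(x,\mathsf{H}_i,\mathsf{V}_i)=\ell_1(\mathsf{H}_i-\Theta_1)\mathsf{V}_i\ge 0$, and symmetrically in the second component — and (ii) $(\min\{\mathsf{H}_i,\Theta_1\},\min\{\mathsf{V}_i,\Theta_2\})$ is a supersolution of \eqref{4.16}, using monotonicity of $f^\sigma$ in the off-diagonal variable together with the strict supersolution property of $\ol U$. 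Running the order-preserving $L^\infty$ monotone iteration for \eqref{4.16} upward from the subsolution $(\mathsf{H}_i,\mathsf{V}_i)$ (below a large multiple of $\ol U$) and downward from the supersolution $(\min\{\mathsf{H}_i,\Theta_1\},\min\{\mathsf{V}_i,\Theta_2\})$ (above $\ud U$), both limits are bounded positive solutions of \eqref{4.16}, hence equal $(\widehat{\mathsf{H}}_i^\sigma,\widehat{\mathsf{V}}_i^\sigma)$ by its uniqueness; this sandwiches $(\widehat{\mathsf{H}}_i^\sigma,\widehat{\mathsf{V}}_i^\sigma)\le(\mathsf{H}_i,\mathsf{V}_i)\le(\widehat{\mathsf{H}}_i^\sigma,\widehat{\mathsf{V}}_i^\sigma)$, so $(\mathsf{H}_i,\mathsf{V}_i)=(\widehat{\mathsf{H}}_i^\sigma,\widehat{\mathsf{V}}_i^\sigma)$, which is continuous and lies strictly below $\ol U$. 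This at once gives: \eqref{4.17} has the unique bounded (hence continuous) positive solution $(\mathsf{H}_i^\sigma,\mathsf{V}_i^\sigma)=(\widehat{\mathsf{H}}_i^\sigma,\widehat{\mathsf{V}}_i^\sigma)$, the two problems are equivalent, and — specializing to positive solutions of \eqref{4.16} below $\ol U$, where \eqref{4.16} and \eqref{4.17} agree — part (2). (Alternatively, for merely bounded solutions one may first prove continuity via Theorem \ref{tA.2}, whose algebraic hypotheses hold because $b^\sigma_{kk}(x_0)<0$ makes the scalar equations $(3.6)$ uniquely solvable and because, solving each equation of \eqref{4.17} pointwise for one unknown, the positive solutions of the $2\times 2$ system are confined to the box $[0,\ol U(x_0)]$ where cooperativity and strict subhomogeneity give uniqueness, and then invoke part (1).)

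\emph{The main obstacle.} The supersolution computation for $\ol U$ and the construction of $\ud U$ are routine perturbations of the scalar arguments behind $(4.6_k)$ and of Theorem \ref{thB}(1). The delicate point is that \eqref{4.17} is cooperative only inside the physical box $\{0\le u\le\ol U\}$, so neither the comparison principle nor the abstract uniqueness theory applies to it directly; the whole argument turns on the observation used above, namely that the cut-off in \eqref{4.16} is engineered so that any solution of \eqref{4.17} is a subsolution and its truncation a supersolution of the globally cooperative problem \eqref{4.16}, which relocates all the analysis to \eqref{4.16}. A secondary nuisance, which must be spelled out rather than cited, is that {\bf(H2)} fails for \eqref{4.16} at large $u$, so the abstract theorems cannot be quoted verbatim and one must check that the irreducibility they invoke is only ever evaluated on segments lying in $\{u\le\ol U\}$, where it persists.
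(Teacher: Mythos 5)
Your two observations --- that a bounded positive solution $(\mathsf{H}_i,\mathsf{V}_i)$ of \eqref{4.17} is a subsolution of the truncated problem \eqref{4.16}, and that its pointwise cut-off $(\min\{\mathsf{H}_i,\Theta_1\},\min\{\mathsf{V}_i,\Theta_2\})$ is a supersolution of \eqref{4.16} (here $\Theta_k=\mathsf{H}$ or $\mathsf{V}$ plus $\sigma\phi^\ep_k$) --- are correct and constitute a genuinely different decomposition from the paper's Step~3, and your supersolution verification for $\ol U=(\Theta_1,\Theta_2)$ matches the paper's Step~1 computation. However, the pivotal uniqueness claim is not established. You assert that Theorem \ref{th3.7}, together with the relaxations of Remark \ref{r3.2}(1), yields a solution of \eqref{4.16} that is ``unique among bounded positive solutions''. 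That is not what those results give here: the uniqueness in Theorem \ref{th3.3} uses strict subhomogeneity at the solutions involved, and for $f^\sigma$ this is lost once the argument $u_1$ exceeds $\Theta_1$ (indeed $f^\sigma(x,\delta u)=\delta f^\sigma(x,u)$ whenever $\delta u_1\ge\Theta_1$), so the relaxed hypotheses only certify uniqueness within the invariant box $[0,\ol U]$. Your discussion of ``irreducibility along segments'' addresses the failure of {\bf(H2)} but overlooks that {\bf(H3)} is what actually degenerates outside the box.

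This gap propagates to the squeeze. The downward iteration from $(\min\{\mathsf{H}_i,\Theta_1\},\min\{\mathsf{V}_i,\Theta_2\})$ does stay in $[0,\ol U]$ and correctly yields $(\widehat{\mathsf H}_i^\sigma,\widehat{\mathsf V}_i^\sigma)\le(\mathsf{H}_i,\mathsf{V}_i)$. But the upward iteration from the subsolution $(\mathsf{H}_i,\mathsf{V}_i)$ toward $K\ol U$ with $K>1$ produces a positive solution that may leave $[0,\ol U]$, and in-box uniqueness cannot then identify it with $(\widehat{\mathsf H}_i^\sigma,\widehat{\mathsf V}_i^\sigma)$; so you never obtain the opposite inequality $(\mathsf{H}_i,\mathsf{V}_i)\le(\widehat{\mathsf H}_i^\sigma,\widehat{\mathsf V}_i^\sigma)$. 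In effect the argument presupposes that every positive solution of \eqref{4.16} lies below $\ol U$, which is part of what must be proved. The paper closes exactly this hole with a direct Step~3: for any other continuous positive solution $\widetilde U$ it sets $\bar q=\sup\{q\le1:q\widetilde U\le\widehat U\}$ and $\ud k=\inf\{k\ge1:k\widetilde U\ge\widehat U\}$, and exploits the explicit truncations together with $\widehat U<\ol U$ to derive strict inequalities such as \eqref{4.19} forcing $\bar q=\ud k=1$. That is the missing ingredient; once it is supplied, your sub/supersolution observations give a clean route to the \eqref{4.16}--\eqref{4.17} equivalence, but they cannot replace the global uniqueness argument.
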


\begin{proof} (1) Here, we only discuss the existence and uniqueness of positive solutions for problem \qq{4.16}. The existence and uniqueness of positive solutions for problem \qq{4.17} can be addressed in a similar manner, which is simpler than that of \qq{4.16}.

{\it Step 1: The construction of upper solution of \qq{4.16}}. Owing to $a_1, a_2, \mathsf{H}, \mathsf{V}>0$ in $\boo$, we can choose $0<\sigma_0\ll 1$ such that, when $|\sigma|\le\sigma_0$,
 \bes\left\{\begin{aligned}
&\mathsf{H}+\sigma\phi^\ep_1>0, \;\; \mathsf{V}+\sigma\phi^\ep_2>0\;\;&&
\mbox{in}\;\;\ol\oo, \\[.1mm]
&a_1(\mathsf{H}+\sigma\phi^\ep_1)>\sigma\phi^\ep_1(\lm_p(\ud{\cal G}_1^\ep)+g_1-\ud g_1^\ep+\sigma c_1\phi^\ep_1)\;\;&&\mbox{in}\;\;\ol\oo,\\[.1mm]
&a_2(\mathsf{V}+\sigma\phi^\ep_2)>\sigma\phi^\ep_2(\lm_p(\ud{\cal G}_2^\ep)+g_2-\ud g_2^\ep+\sigma c_2\phi^\ep_2)\;\;&&\mbox{in}\;\;\ol\oo.
 \end{aligned}\rr.\lbl{4.18}\ees

Now we verify that
 \bess
 (\ol{\mathsf{H}}_i(x), \ol{\mathsf{V}}_i(x))=\big(\mathsf{H}(x)+\sigma\phi^\ep_1(x), \mathsf{V}(x)+\sigma\phi^\ep_2(x)\big)\eess
is a strict upper solution of \qq{4.16}. Making use of the second inequality of \qq{4.18} we have that, by the direct calculations,
 \bess
&&d_1\!\int_\oo\! J_1(x,y)\ol{\mathsf{H}}_i(y)\dy+b_{11}^\sigma(x)\ol{\mathsf{H}}_i
+\ell_1(x)\big(\mathsf{H}(x)+\sigma\phi^\ep_1(x)-\ol{\mathsf{H}}_i\big)^+
\ol{\mathsf{V}}_i\\[.2mm]
&=&d_1\!\int_\oo\! J_1(x,y)\mathsf{H}(y)\dy+g_1(x)\mathsf{H}(x)-c_1(x)\mathsf{H}^2(x)
+\sigma\kk(d_1\!\int_\oo\! J_1(x,y)\phi^\ep_1(y)\dy+\ud g_1^\ep(x)\phi^\ep_1(x)\rr)\\[1mm]
&& -a_1(x)[\mathsf{H}(x)+\sigma\phi^\ep_1(x)]+\sigma[g_1(x)-\ud g_1^\ep(x)]\phi^\ep_1(x)+c_1(x)\sigma^2(\phi^\ep_1)^2(x)\\[.1mm]
&=&\sigma\lm_p(\ud{\cal G}_1^\ep)\phi^\ep_1(x)-a_1(x)[\mathsf{H}(x)+\sigma\phi^\ep_1(x)]+\sigma [g_1(x)-\ud g_1^\ep(x)]\phi^\ep_1(x)+c_1(x)\sigma^2(\phi^\ep_1)^2(x)\\[.1mm]
&=&\sigma\phi^\ep_1(x)[\lm_p(\ud{\cal G}_1^\ep)+g_1(x)-\ud g_1^\ep(x)+\sigma c_1(x)\phi^\ep_1(x)]-a_1(x)[\mathsf{H}(x)+\sigma\phi^\ep_1(x)]\\[.1mm]
 &<&0,\;\;\; x\in\boo.
 \eess
Likewise,
 \bess
 d_2\!\int_\oo\! J_2(x,y)\ol{\mathsf{V}}_i(y)\dy+b_{22}^\sigma(x)\ol{\mathsf{V}}_i
 +\ell_2(x)\big(\mathsf{V}(x)+\sigma\phi^\ep_2(x)-\ol{\mathsf{V}}_i\big)^+\ol{\mathsf{H}}_i<0,\;\;\; x\in\boo.\vspace{-1mm}
\eess

{\it Step 2: The existence of positive solutions of \qq{4.16}}. In view of $\lm({\mathscr B})>0$, we can find a $0<\sigma_0\ll 1$ such that $\lm({\mathscr B}_\sigma)>0$ for all $|\sigma|\le\sigma_0$ by the continuity. Set
 \bess
 f_1(x,\mathsf{H}_i,\mathsf{V}_i)&=&d^*_1(x) H_i+b_{11}^\sigma(x)\mathsf{H}_i
+\ell_1(x)\big(\mathsf{H}(x)+\sigma\phi^\ep_1(x)-\mathsf{H}_i\big)^+ \mathsf{V}_i,\\[0.1mm]
f_2(x,\mathsf{H}_i,\mathsf{V}_i)&=&d^*_2(x) H_i+b_{22}^\sigma(x)\mathsf{V}_i
+\ell_2(x)\big(\mathsf{V}(x)+\sigma\phi^\ep_2(x)-\mathsf{V}_i\big)^+ \mathsf{H}_i.
 \eess
It is easy to see that $f(x,\mathsf{H}_i,\mathsf{V}_i)=
(f_1(x,\mathsf{H}_i,\mathsf{V}_i),f_2(x,\mathsf{H}_i,\mathsf{V}_i))$ satisfies the conditions {\bf(H1)}--{\bf(H3)} when $0\le\mathsf{H}_i\leq\mathsf{H}+\sigma\phi^\ep_1$ and $0\le\mathsf{V}_i\le\mathsf{V}+\sigma\phi^\ep_2$. Noticing that $B(\tilde x)$ is irreducible. By repeating the arguments to those in the proofs of Theorem \ref{thB}(1) and Theorem \ref{th3.7}, locating between $(0,0)$ and $(\mathsf{H}+\sigma\phi^\ep_1, \mathsf{V}+\sigma\phi^\ep_2)$, the problem \qq{4.16} has a positive solution $(\widehat{\mathsf{H}}_i^\sigma, \widehat{\mathsf{V}}_i^\sigma)\in [C(\boo)]^2$.
Furthermore, it follows from Corollary \ref{c3.1} that any positive solution $(\widetilde{\mathsf{H}}_i^\sigma, \widetilde{\mathsf{V}}_i^\sigma)$ of \qq{4.16} locating between $(0,0)$ and $(\mathsf{H}+\sigma\phi^\ep_1, \mathsf{V}+\sigma\phi^\ep_2)$ satisfies
 \[\widehat{\mathsf{H}}_i^\sigma<\mathsf{H}+\sigma\phi^\ep_1,\;\;\;
 \widehat{\mathsf{V}}_i^\sigma<\mathsf{V}+\sigma\phi^\ep_2\;\;\;\mbox{in}\;\;\boo\]
since $(\mathsf{H}+\sigma\phi^\ep_1, \mathsf{V}+\sigma\phi^\ep_2)$ is a strict upper solution of \qq{4.16}. Therefore, the positive solution of \qq{4.16}, locating between $(0,0)$ and $(\mathsf{H}+\sigma\phi^\ep_1, \mathsf{V}+\sigma\phi^\ep_2)$, is unique (Theorem \ref{th3.3}), which is exactly $\big(\widehat{\mathsf{H}}_i^\sigma, \widehat{\mathsf{V}}_i^\sigma\big)$.  Certainly, $(\widehat{\mathsf{H}}_i^\sigma, \widehat{\mathsf{V}}_i^\sigma)$ satisfies \qq{4.17}.

{\it Step 3: The uniqueness of continuous positive solution}. Until now, we already established the uniqueness  of positive solution of \qq{4.16} located between $(0,0)$ and $(\mathsf{H}+\sigma\phi^\ep_1, \mathsf{V}+\sigma\phi^\ep_2)$. The global uniqueness of continuous positive solution can also be obtained by repeating the arguments used in Theorem \ref{thB}(1). For reader's convenience, we provide the details here.

Let $(\widetilde{\mathsf{H}}_i^\sigma, \widetilde{\mathsf{V}}_i^\sigma)$ be another continuous positive solution of \qq{4.16}. We can find a constant $0<q<1$ such that $q(\widetilde{\mathsf{H}}_i^\sigma, \widetilde{\mathsf{V}}_i^\sigma)\le(\widehat{\mathsf{H}}_i^\sigma, \widehat{\mathsf{V}}_i^\sigma)$ in $\boo$. Set
\[\bar q=\sup\big\{0<q\le 1: q(\widetilde{\mathsf{H}}_i^\sigma, \widetilde{\mathsf{V}}_i^\sigma)\le(\widehat{\mathsf{H}}_i^\sigma, \widehat{\mathsf{V}}_i^\sigma) \;{\rm ~ in  ~ } \boo\big\}.\]
Then $\bar q$ is well defined and $0<\bar q\le1$, and $\bar q(\widetilde{\mathsf{H}}_i^\sigma, \widetilde{\mathsf{V}}_i^\sigma)\le\big(\widehat{\mathsf{H}}_i^\sigma, \widehat{\mathsf{V}}_i^\sigma)$ in $\boo$. We claim $\bar q=1$. Assume on the contrary that $\bar q<1$. Set
 \[\mathsf{U}(x)=\widehat{\mathsf{H}}_i^\sigma(x)-\bar q\widetilde{\mathsf{H}}_i^\sigma(x),\;\;\;
 \mathsf{Z}(x)=\widehat{\mathsf{V}}_i^\sigma(x)-\bar q\widetilde{\mathsf{V}}_i^\sigma(x).\]
Then $\mathsf{U}, \mathsf{Z}\ge 0$ in $\boo$, and so
$\bar q\widetilde{\mathsf{H}}_i^\sigma\le\widehat{\mathsf{H}}_i^\sigma
<\mathsf{H}+\sigma\phi^\ep_1$ and $\bar q\widetilde{\mathsf{V}}_i^\sigma\le\widehat{\mathsf{V}}_i^\sigma
<\mathsf{V}+\sigma\phi^\ep_2$ in $\boo$. Using these estimates we easily see that the following hold:
  \bess
&(\mathsf{H}+\sigma\phi^\ep_1-\widehat{\mathsf{H}}_i^\sigma)^+
=\mathsf{H}+\sigma\phi^\ep_1-\widehat{\mathsf{H}}_i^\sigma,&\\
&(\mathsf{H}+\sigma\phi^\ep_1-\widetilde{\mathsf{H}}_i^\sigma)^+
<\mathsf{H}+\sigma\phi^\ep_1-\bar q\widetilde{\mathsf{H}}_i^\sigma,&\\
& (\mathsf{H}+\sigma\phi^\ep_1-\widehat{\mathsf{H}}_i^\sigma)^+
-(\mathsf{H}+\sigma\phi^\ep_1-\widetilde{\mathsf{H}}_i^\sigma)^+
>\bar q\widetilde{\mathsf{H}}_i^\sigma-\widehat{\mathsf{H}}_i^\sigma,&\\
&(\mathsf{V}+\sigma\phi^\ep_2-\widehat{\mathsf{V}}_i^\sigma)^+
=\mathsf{V}+\sigma\phi^\ep_2-\widehat{\mathsf{V}}_i^\sigma,&\\ &(\mathsf{V}+\sigma\phi^\ep_2-\widetilde{\mathsf{V}}_i^\sigma)^+
<\mathsf{V}+\sigma\phi^\ep_2-\bar q\widetilde{\mathsf{V}}_i^\sigma,&\\
&(\mathsf{V}+\sigma\phi^\ep_2-\widehat{\mathsf{V}}_i^\sigma)^+
-(\mathsf{V}+\sigma\phi^\ep_2-\widetilde{\mathsf{V}}_i^\sigma)^+
>\bar q\widetilde{\mathsf{V}}_i^\sigma-\widehat{\mathsf{V}}_i^\sigma.&
\eess

Denote
  \bess
\alpha_1(x)&=&\mu_1(x)+c_1(x)[\mathsf{H}(x)-\sigma\phi^\ep_1(x)]
+\ell_1(x)\widetilde{\mathsf{V}}_i^\sigma(x),\\
\alpha_2(x)&=&\mu_2(x)+c_2(x)[\mathsf{V}(x)-\sigma\phi^\ep_2(x)]
+\ell_2(x)\widetilde{\mathsf{H}}_i^\sigma(x).
  \eess
After careful calculation we can show that $(\mathsf{U}, \mathsf{Z})$ satisfies
\bes\left\{\!\!\begin{aligned}
	&-d_1\!\int_\oo\!J_1(x,y)\mathsf{U}(y)\dy+d_1^*(x)\mathsf{U}
	+\alpha_1(x)\mathsf{U}>\ell_1(x)\big[\mathsf{H}(x)+\sigma\phi^\ep_1(x)
	-\widehat{\mathsf{H}}_i^\sigma(x)\big]\mathsf{Z}\ge 0,&&\!\!x\in\boo,\\[0.5mm]
	&-d_2\!\int_\oo\!J_2(x,y)\mathsf{Z}(y)\dy+d_2^*(x)\mathsf{Z}
	+\alpha_2(x)\mathsf{Z}>\ell_2(x)\big[\mathsf{V}(x)+\sigma\phi^\ep_2(x)
	-\widehat{\mathsf{V}}_i^\sigma(x)\big]\mathsf{U}\ge 0,&&\!\!x\in\boo.
\end{aligned}\right.\qquad\lbl{4.19}\ees
It follows that $\mathsf{U},\mathsf{Z}>0$ in $\boo$ by the maximum principle as $\mathsf{U},\mathsf{Z}\ge 0$ in $\boo$. Then there exists $0<\varrho<1-\bar q$ such that $(\mathsf{U}, \mathsf{Z})\ge\varrho(\widetilde{\mathsf{H}}_i^\sigma, \widetilde{\mathsf{V}}_i^\sigma)$, i.e., $(\bar q+\varrho)(\widetilde{\mathsf{H}}_i^\sigma, \widetilde{\mathsf{V}}_i^\sigma)\le(\widehat{\mathsf{H}}_i^\sigma, \widehat{\mathsf{V}}_i^\sigma)$ in $\boo$. This contradicts the definition of $\bar q$. Hence $\bar q=1$, i.e.,
  \bes
  \big(\widetilde{\mathsf{H}}_i^\sigma(x),\, \widetilde{\mathsf{V}}_i^\sigma(x)\big)\le\big(\widehat{\mathsf{H}}_i^\sigma(x),\, \widehat{\mathsf{V}}_i^\sigma(x)\big),\;\;\;x\in\boo.
  \lbl{4.20}\ees
Certainly, $\widetilde{\mathsf{H}}_i^\sigma<\mathsf{H}+\sigma\phi^\ep_1$, $\widetilde{\mathsf{V}}_i^\sigma<\mathsf{V}+\sigma\phi$, and then
 \bess
(\mathsf{H}+\sigma\phi^\ep_1-\widetilde{\mathsf{H}}_i^\sigma)^+
=\mathsf{H}+\sigma\phi^\ep_1-\widetilde{\mathsf{H}}_i^\sigma,\;\;\;
(\mathsf{V}+\sigma\phi^\ep_2-\widetilde{\mathsf{V}}_i^\sigma)^+
=\mathsf{V}+\sigma\phi^\ep_2-\widetilde{\mathsf{V}}_i^\sigma.\vspace{-2mm}\eess

On the other hand, we can find $k >1$ such that
$k(\widetilde{\mathsf{H}}_i^\sigma, \widetilde{\mathsf{V}}_i^\sigma)\ge(\widehat{\mathsf{H}}_i^\sigma, \widehat{\mathsf{V}}_i^\sigma)$ in $\boo$. Set
\[\ud k=\inf\big\{k>1: k(\widetilde{\mathsf{H}}_i^\sigma, \widetilde{\mathsf{V}}_i^\sigma)\ge(\widehat{\mathsf{H}}_i^\sigma, \widehat{\mathsf{V}}_i^\sigma)\;{\rm ~ in  ~ }\, \boo\big\}.\]
Then $\ud k$ is well defined, $\ud k\ge1$ and $\ud k\big(\widetilde{\mathsf{H}}_i^\sigma, \widetilde{\mathsf{V}}_i^\sigma\big)\ge\big(\widehat{\mathsf{H}}_i^\sigma, \widehat{\mathsf{V}}_i^\sigma\big)$ in $\boo$. If $\ud k>1$, then $\mathsf{P}:=\ud k\widetilde{\mathsf{H}}_i^\sigma-\widehat{\mathsf{H}}_i^\sigma\ge 0$ and $\mathsf{Q}:=\ud k\widetilde{\mathsf{V}}_i^\sigma-\widehat{\mathsf{V}}_i^\sigma\ge 0$ in $\boo$. Similarly to the above, we can verify that $\big(\mathsf{P}, \mathsf{Q}\big)$ satisfies a system of differential inequalities similar to \qq{4.19} and derive $\mathsf{P}, \mathsf{Q}>0$ in $\boo$ by the maximum principle, and there exists $0<r<\ud k-1$ such that $\big(\mathsf{P}, \mathsf{Q}\big)\ge r\big(\widetilde{\mathsf{H}}_i^\sigma, \widetilde{\mathsf{V}}_i^\sigma\big)$, i.e., $(\ud k-r)\big(\widetilde{\mathsf{H}}_i^\sigma, \widetilde{\mathsf{V}}_i^\sigma\big)\ge\big(\widehat{\mathsf{H}}_i^\sigma, \widehat{\mathsf{V}}_i^\sigma\big)$ in $\boo$. This contradicts the definition of $\ud k$. Hence $\ud k=1$ and $(\widetilde{\mathsf{H}}_i^\sigma, \widetilde{\mathsf{V}}_i^\sigma)\ge(\widehat{\mathsf{H}}_i^\sigma, \widehat{\mathsf{V}}_i^\sigma)$ in $\boo$. This combined with \qq{4.20} gives the uniqueness.

(2) We only prove the first part as the second part can be done by the same way.
Define
 \bess
 f_1(x,u_1, u_2)&=&\ell_1(x)(\mathsf{H}(x)+\sigma\phi^\ep_1(x)-u_1)u_2,\\
 f_2(x,u_1,u_2)&=&\ell_2(x)(\mathsf{V}(x)+\sigma\phi^\ep_2(x)-u_2)u_1,\eess
and let $(\mathsf{H}_i^\sigma, \mathsf{V}_i^\sigma)$ be a  bounded nonnegative solution of \qq{4.17}. By the following Lemma \ref{l4.2}, for any given $x\in\boo$, the sequel algebraic system \qq{4.21} with $(U, Z)=(\mathsf{H}_i^\sigma, \mathsf{V}_i^\sigma)$ has at most one positive solution $(v_1, v_2)$.
For any given $x\in\boo$. As $b_{11}^\sigma(x)<0, b_{22}^\sigma(x)<0$ in $\boo$, we easily see that algebraic equations
  \bess
  d_1\dd\int_\oo J_1(x,y)\mathsf{H}_i^\sigma(y)\dy+b_{11}^\sigma(x)w_1+f_1(x,w_1,0)=0,
  \eess
 and
  \bess
  d_2\dd\int_\oo J_2(x,y)\mathsf{V}_i^\sigma(y)\dy+b_{22}^\sigma(x)w_2+f_2(x,0,w_2)=0
 \eess
have at most one positive solutions $w_1$ and $w_2$, respectively.

Making use of Theorem \ref{tA.2}, we conclude that
$(\mathsf{H}_i^\sigma, \mathsf{V}_i^\sigma)$ is continuous in $\boo$, and then is unique by the conclusion (1). The proof is complete.\vspace{-2mm} \end{proof}

\begin{proof}[Proof of Theorem \ref{t5.1}] (1) Taking $\sigma=0$ in Lemma \ref{l4.1}, the existence and uniqueness of positive solutions are proved.

(2)\, Assume $\lm({\mathscr B})\le 0$ and prove that \qq{4.11} has no positive solution. Assume on the contrary that \qq{4.11} has a positive solution $(\mathsf{H}_i, \mathsf{V}_i)$. Then $(\mathsf{H}_i, \mathsf{V}_i)\in[C(\boo)]^2$ by Lemma \ref{l4.1}(2). Set $\rho_1=\min_{\boo}\ell_1(x)\mathsf{V}_i(x)$ and $ \rho_2=\min_{\boo}\ell_2(x)\mathsf{H}_i(x)$. Then $\rho_1, \rho_2>0$. By the direct calculations we have that, for $\rho=\min\{\rho_1, \rho_2\}$,
 \bess
 0&=&d_1\!\int_\oo\! J_1(x,y)\mathsf{H}_i(y)\dy+[g_1(x)-a_1(x)-c_1(x)
 \mathsf{H}(x)]\mathsf{H}_i+\ell_1(x)\mathsf{H}(x)\mathsf{V}_i
 -\ell_1(x)\mathsf{H}_i\mathsf{V}_i\\
&\le&d_1\!\int_\oo\! J_1(x,y)\mathsf{H}_i(y)\dy+b_{11}(x)\mathsf{H}_i+b_{12}(x)
\mathsf{V}_i-\rho_1\mathsf{H}_i\\
&\le&d_1\!\int_\oo\! J_1(x,y)\mathsf{H}_i(y)\dy+\bar b_{11}^\ep(x)\mathsf{H}_i-\rho\mathsf{H}_i+\bar b_{12}^\ep(x)\mathsf{V}_i,\;\;\;x\in\boo,\\
0&=&d_2\!\int_\oo\! J_2(x,y)\mathsf{V}_i(y)\dy+[g_2(x)-a_2(x)-c_2(x)
\mathsf{V}(x)]\mathsf{V}_i+\ell_2(x)(\mathsf{V}(x)-\mathsf{V}_i)\mathsf{H}_i\\
&\le&d_1\!\int_\oo\! J_2(x,y)\mathsf{V}_i(y)\dy+\bar b_{22}^\ep(x)\mathsf{V}_i-\rho\mathsf{V}_i+\bar b_{21}^\ep(x)\mathsf{H}_i,\;\;\;x\in\boo.
 \eess
It follows that $\lm_p(\ol{\mathscr B}^\ep)\ge\rho$ for all $0<\ep\ll 1$, so  $\lm({\mathscr B})\ge\rho>0$. This is a contradiction.
\end{proof}\vspace{-2mm}

\begin{lem}\lbl{l4.2} Let $\mathsf{H}, \mathsf{V}, \sigma, \phi^\ep_k, b_{kl}^\sigma$ and $f_1, f_2$ be given in the above; and $U, Z$ be nonnegative functions in $\boo$. Then, for any given $x\in\boo$, the algebraic system
  \bes\left\{\begin{aligned}
&d_1\!\int_\oo\! J_1(x,y)U(y)\dy+b_{11}^\sigma(x)v_1+f_1(x,v_1,v_2)=0,\\
&d_2\!\int_\oo\! J_2(x,y)Z(y)\dy+b_{22}^\sigma(x)v_2+f_2(x,v_1,v_2)=0
 \end{aligned}\rr.\lbl{4.21}\ees
has at most one positive solution $v=(v_1, v_2)$.\vspace{-2mm}
\end{lem}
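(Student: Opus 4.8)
The plan is to fix $x\in\boo$ and reduce the algebraic system \qq{4.21} to a single polynomial equation in $v_2$ by elimination, exploiting the sign information already at hand. Recall from the construction preceding \qq{4.14} that $-b_{11}^\sigma(x)>0$, $-b_{22}^\sigma(x)>0$, $\mathsf{H}(x)+\sigma\phi^\ep_1(x)>0$, $\mathsf{V}(x)+\sigma\phi^\ep_2(x)>0$, $\ell_k(x)\ge0$, and that
\[f_1(x,v_1,v_2)=\ell_1(x)\big(\mathsf{H}(x)+\sigma\phi^\ep_1(x)-v_1\big)v_2,\qquad f_2(x,v_1,v_2)=\ell_2(x)\big(\mathsf{V}(x)+\sigma\phi^\ep_2(x)-v_2\big)v_1.\]
Since $U,Z\ge0$ and the kernels are nonnegative, the inhomogeneous terms $a:=d_1\int_\oo J_1(x,y)U(y)\dy\ge0$ and $b:=d_2\int_\oo J_2(x,y)Z(y)\dy\ge0$; this nonnegativity is precisely what will drive the argument.

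First I would solve the first equation of \qq{4.21} for $v_1$. Its $v_1$-coefficient is $b_{11}^\sigma(x)-\ell_1(x)v_2<0$ for every $v_2\ge0$, so the equation determines $v_1$ uniquely as a function of $v_2$,
\[v_1=\frac{a+\ell_1(x)\big(\mathsf{H}(x)+\sigma\phi^\ep_1(x)\big)v_2}{-b_{11}^\sigma(x)+\ell_1(x)v_2}.\]
In particular, two positive solutions of \qq{4.21} with the same second component have the same first component, so it suffices to bound the number of admissible $v_2>0$. Substituting this expression into the second equation of \qq{4.21} and clearing the strictly positive denominator $-b_{11}^\sigma(x)+\ell_1(x)v_2$ yields a polynomial identity $P(v_2)=0$ with $\deg P\le2$.

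The crux is to read off two coefficients of $P$. A direct expansion gives that the coefficient of $v_2^2$ equals $-\ell_1(x)\big(-b_{22}^\sigma(x)+\ell_2(x)(\mathsf{H}(x)+\sigma\phi^\ep_1(x))\big)\le0$, and that $P(0)=\big(-b_{11}^\sigma(x)\big)b+\ell_2(x)\big(\mathsf{V}(x)+\sigma\phi^\ep_2(x)\big)a\ge0$. If $\ell_1(x)>0$, then $P$ is a genuine downward parabola with $P(0)\ge0$, so the product of its two (possibly complex) roots, which equals $P(0)$ divided by the negative leading coefficient, is $\le0$; hence $P$ has at most one positive root. If $\ell_1(x)=0$, then $v_1=a/(-b_{11}^\sigma(x))$ is already fixed and $P$ collapses to an affine function of $v_2$ with slope $-\big(b_{11}^\sigma(x)b_{22}^\sigma(x)+\ell_2(x)a\big)<0$, which likewise has at most one positive root. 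In either case \qq{4.21} has at most one positive solution.

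I expect the only real work to be bookkeeping of signs: making the constant term $P(0)$ come out nonnegative (this is exactly where $U,Z\ge0$ enters) while the leading coefficient comes out nonpositive (this is where $b_{11}^\sigma(x),b_{22}^\sigma(x)<0$ enter), together with the harmless degenerate case $\ell_1(x)=0$. No compactness or continuity input is involved (once $x$ is frozen this is a purely finite-dimensional elementary fact), so the proof should be short; the statement is nonetheless isolated as a separate lemma because it supplies a hypothesis of Theorem \ref{tA.2} in the proof of Lemma \ref{l4.1}(2).
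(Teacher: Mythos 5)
Your proof is correct and uses essentially the same mechanism as the paper's: eliminate one unknown, reduce to a quadratic, and conclude "at most one positive root" from the signs of the leading coefficient and constant term (Vieta). The only difference is the elimination order — the paper solves the first equation for $v_2$ in terms of $v_1$ (which requires a separate case $v_1=q_1:=\mathsf{H}(x)+\sigma\phi^\ep_1(x)$ where the denominator $p_1(q_1-v_1)$ vanishes) and lands on a quadratic in $v_1$; you solve for $v_1$ in terms of $v_2$, exploiting that the coefficient $b_{11}^\sigma(x)-\ell_1(x)v_2$ is strictly negative for all $v_2\ge0$ so the denominator never degenerates, and land on a quadratic in $v_2$. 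Your route is marginally cleaner in that it avoids the $v_1=q_1$ branch and explicitly treats $\ell_1(x)=0$, whereas the paper writes $p_k>0$ even though the standing hypotheses only give $\ell_k(x)\ge0$; but the substance of the argument is identical.
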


\begin{proof}  By the expressions of $b_{11}$ and $b_{22}$ we have $b_{11}^\sigma, b_{22}^\sigma<0$ in $\boo$ since $|\sigma|\ll 1$. Denote
 \bess
 h_1&=&d_1\!\int_\oo\! J_1(x,y)U(y)\dy,\;\;
 h_2=d_2\!\int_\oo\! J_2(x,y)Z(y)\dy,\\[1mm]
 r_1&=&-b_{11}^\sigma(x),\;\;r_2=-b_{22}^\sigma(x),\;\;
 p_1=\ell_1(x),\;\;p_2=\ell_2(x),\\
 q_1&=&\mathsf{H}(x)+\sigma\phi^\ep_1(x),\;\;
 q_2=\mathsf{V}(x)+\sigma\phi^\ep_2(x).
 \eess
Then $h_k\ge 0, r_k,p_k,q_k>0$ for $k=1,2$, and \qq{4.21} can be written as
  \bes\left\{\begin{aligned}
&h_1-r_1v_1+p_1(q_1-v_1\big)v_2=0, \\
&h_2-r_2v_2+p_2(q_2-v_2\big)v_1=0.
 \end{aligned}\rr.\lbl{4.22}\ees
Let $(v_1, v_2)$ be a positive solution of \qq{4.22}.  If $v_1=q_1$, then $v_2=\frac{h_2+p_2q_1q_2}{r_2+p_2q_1}$. If $v_1\not=q_1$, then
 \[v_2=\frac{r_1v_1-h_1}{p_1(q_1-v_1)},\;\;\;
 h_2-r_2\frac{r_1v_1-h_1}{p_1(q_1-v_1)}+p_2
 \left(q_2-\frac{r_1v_1-h_1}{p_1(q_1-v_1)}\rr)v_1=0.\]
By carefully calculations we see that the second equation is equivalent to
 \[p_2(r_1+p_1q_2)v_1^2+(r_1r_2+p_1h_2-p_1p_2q_1q_2-p_2h_1)v_1
 -(r_2h_1+p_1q_1h_2)=0.\]
Since $r_2h_1+p_1q_1h_2\geq 0$, we see that the above equation has at most one positive solution. So \qq{4.22}, and thus \qq{4.21} has at most one positive solution.
\end{proof}\vspace{-1mm}

\subsection{Dynamical properties of \qq{4.2}}\lbl{s3}

In this section we study the stabilities of nonnegative equilibrium solutions.

\begin{theo}\lbl{th4.3} Let $(H_u, H_i, V_u, V_i\big)$ be the unique positive solution of \qq{4.2}. \vspace{-2mm}
\begin{enumerate}[$(1)$]
\item\; In the case of $\lm({\cal G}_1)>0$, $\lm({\cal G}_2)>0$ and $\lm({\mathscr B})>0$, we have
 \bes
 \lim_{t\to+\yy}\big(H_u(x,t), H_i(x,t), V_u(x,t), V_i(x,t)\big)=\big(\mathsf{H}(x)-\mathsf{H}_i(x),\mathsf{H}_i(x), \mathsf{V}(x)-\mathsf{V}_i(x),\mathsf{V}_i(x)\big)\qquad
 \lbl{4.23}\ees
uniformly in $\boo$, where $\mathsf{H}(x)$, $\mathsf{V}(x)$ and $\big(\mathsf{H}_i(x), \mathsf{V}_i(x)\big)$ are the unique positive solutions of $(4.6_1)$, $(4.6_2)$ and \qq{4.11}, respectively, and $\mathsf{H}_i(x)<\mathsf{H}(x), \mathsf{V}_i(x)<\mathsf{V}(x)$.\vvv

\item\; In the case of $\lm({\cal G}_1)>0$, $\lm({\cal G}_2)>0$ and $\lm({\mathscr B})\le 0$, we have
 \bes
\lim_{t\to+\yy}\big(H_u(x,t), H_i(x,t), V_u(x,t), V_i(x,t)\big)=\big(\mathsf{H}(x), 0, \mathsf{V}(x), 0\big)\;\;\mbox{uniformly in}\;\, \boo.
 \quad\lbl{4.24}\ees

\item\; In the case of $\lm({\cal G}_1)\le 0$, $\lm({\cal G}_2)>0$, we have
 \bess
\lim_{t\to+\yy}\big(H_u(x,t), H_i(x,t), V_u(x,t), V_i(x,t)\big)=\big(0, 0, \mathsf{V}(x), 0\big)\;\;\mbox{uniformly in}\;\, \boo.\eess

In the case of $\lm({\cal G}_1)>0$, $\lm({\cal G}_2)\le 0$, we have
 \bess
\lim_{t\to+\yy}\big(H_u(x,t),\, H_i(x,t), V_u(x,t),  V_i(x,t)\big)=\big(\mathsf{H}(x), 0, 0, 0\big)\;\;\mbox{uniformly in}\;\, \boo.\eess

In the case of $\lm({\cal G}_1) \le 0$, $\lm({\cal G}_2) \le0$, we have
  \bess
  \lim_{t\to+\yy}\big(H_u(x,t), H_i(x,t), V_u(x,t), V_i(x,t)\big)=(0, 0, 0, 0)\;\;\mbox{uniformly in}\;\, \boo.\eess
 \end{enumerate}\vspace{-2mm}
  \end{theo}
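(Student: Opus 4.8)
The plan is to split \qq{4.2} into two tractable pieces and recombine them. First, the totals $H=H_u+H_i$ and $V=V_u+V_i$ solve the scalar nonlocal logistic equations \qq{4.3}, \qq{4.4}, so Theorem \ref{t3.3} applies: when $\lm({\cal G}_1)>0$, $H(\cdot,t)\to\mathsf{H}$ uniformly in $\boo$ (with $\mathsf{H}$ the unique positive solution of $(4.6_1)$), and when $\lm({\cal G}_1)\le0$, $H(\cdot,t)\to0$; similarly for $V$ via ${\cal G}_2$. Since $0\le H_u,H_i\le H$ and $0\le V_u,V_i\le V$ pointwise, a component whose total vanishes is squeezed to $0$. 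Second, the infected pair $(H_i,V_i)$ satisfies \qq{4.9}, which is asymptotically autonomous with limit \qq{4.10}--\qq{4.11}; comparing \qq{4.9} with the perturbed cooperative systems of Lemma \ref{l4.1} is the heart of the argument.

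\emph{Case (3).} If $\lm({\cal G}_1)\le0$ and $\lm({\cal G}_2)\le0$, both totals vanish and hence so do all four components. If $\lm({\cal G}_1)\le0$ and $\lm({\cal G}_2)>0$, then $H_u,H_i\to0$ uniformly and $V\to\mathsf{V}$ (equation \qq{4.4} is closed in $V$), and it remains to prove $V_i\to0$. I would rewrite the $V_i$-equation of \qq{4.2} as $V_{it}=d_2\int_\oo J_2(x,y)V_i(y,t)\dy+c(x,t)V_i+\ell_2(x)H_i(x,t)V(x,t)$ with $c(x,t)\le-d^*_2(x)-\mu_2(x)-c_2(x)V(x,t)$, so the source $\ell_2H_iV\to0$ uniformly and $c(\cdot,t)\to\hat c:=-d^*_2-\mu_2-c_2\mathsf{V}$ uniformly. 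Since $(4.6_2)$ exhibits $\mathsf{V}>0$ as a positive eigenfunction of $d_2\int_\oo J_2\cdot+(g_2-c_2\mathsf{V})\cdot$ with eigenvalue $0$, and $g_2-c_2\mathsf{V}=\hat c+a_2>\hat c$ (as $a_2>0$), the operator $d_2\int_\oo J_2\cdot+\hat c\,\cdot$ has negative generalized principal eigenvalue by strict monotonicity and continuity of $\lm(\cdot)$; the same then holds for $d_2\int_\oo J_2\cdot+(\hat c+\delta)\cdot$ with $\delta>0$ small, and comparing $V_i$ for $t$ large with a decaying supersolution built from the positive eigenfunction of an upper control operator (Theorem \ref{thA}) of the latter forces $V_i\to0$, whence $V_u=V-V_i\to\mathsf{V}$. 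The subcase $\lm({\cal G}_1)>0$, $\lm({\cal G}_2)\le0$ is symmetric.

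\emph{Cases (1) and (2).} Now $\lm({\cal G}_1),\lm({\cal G}_2)>0$, so $H\to\mathsf{H}$, $V\to\mathsf{V}$ uniformly and the issue is $\lim(H_i,V_i)$. I would fix the control data $\phi^\ep_k$, $b^\sigma_{kl}$ of \qq{4.14}--\qq{4.15} and the truncated cooperative systems \qq{4.16}, which admit the strict upper solution $(\mathsf{H}+\sigma\phi^\ep_1,\mathsf{V}+\sigma\phi^\ep_2)$. Because $H_i\le H$, $V_i\le V$, the truncations are inactive for large $t$, and using $\mathsf{H}-|\sigma|\phi^\ep_1\le H(\cdot,t)\le\mathsf{H}+|\sigma|\phi^\ep_1$ (and likewise for $V$) together with $H-H_i=H_u\ge0$, one checks that for $t\ge t_\sigma$ the pair $(H_i,V_i)$ is a subsolution of \qq{4.16} when $\sigma>0$ and a supersolution when $\sigma<0$. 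Upper bound ($\sigma>0$): since $(H_i,V_i)(\cdot,t_\sigma)\le(\mathsf{H}+\sigma\phi^\ep_1,\mathsf{V}+\sigma\phi^\ep_2)$ for $t_\sigma$ large, flow \qq{4.16} forward from the upper solution and invoke the argument of Theorem \ref{thB} to get $\limsup_{t\to\infty}(H_i,V_i)\le w^\sigma$, where $w^\sigma$ is the positive equilibrium of \qq{4.16} if $\lm({\mathscr B}_\sigma)>0$ and $w^\sigma=0$ if $\lm({\mathscr B}_\sigma)<0$. Lower bound in case (1) ($\sigma<0$ small, so $\lm({\mathscr B}_\sigma)>0$): below $(H_i,V_i)(\cdot,t_\sigma)$---which is continuous and strictly positive on $\boo$ by the strong maximum principle (Lemma \ref{l3.1}), hence has positive infimum---place a small $\delta(\phi^\ep_1,\phi^\ep_2)$, flow \qq{4.16}, and use Theorem \ref{thB}(1) to get $\liminf_{t\to\infty}(H_i,V_i)\ge(\mathsf{H}^\sigma_i,\mathsf{V}^\sigma_i)$. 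Finally, let $\sigma\to0$: the families $(\mathsf{H}^\sigma_i,\mathsf{V}^\sigma_i)$ and $w^\sigma$ are monotone in $\sigma$ and uniformly bounded, and their limits are nonnegative solutions of \qq{4.11}, hence by Theorem \ref{t5.1} (and positivity of nontrivial solutions) equal the unique positive solution $(\mathsf{H}_i,\mathsf{V}_i)$ in case (1) and equal $0$ in case (2). Therefore $(H_i,V_i)\to(\mathsf{H}_i,\mathsf{V}_i)$ in case (1) and $(H_i,V_i)\to0$ in case (2), uniformly in $\boo$, and $H_u=H-H_i$, $V_u=V-V_i$ deliver \qq{4.23} and \qq{4.24}.

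\emph{Expected main obstacle.} The hardest part should be making the asymptotically-autonomous comparison rigorous in Step 3: one must confirm that the $^+$-truncated systems \qq{4.16} stay globally cooperative along the whole comparison path (the time-dependent coefficients $b^*_{kl}(x,t)$ are only \emph{eventually} trapped between those of the $\sigma$-problems), that the relevant flows remain inside the invariant interval $[0,\mathsf{H}+\sigma\phi^\ep_1]\times[0,\mathsf{V}+\sigma\phi^\ep_2]$ on which {\bf(H1)}--{\bf(H3)} hold (as in Lemma \ref{l4.1}), and that the iterated limit $t\to\infty$ then $\sigma\to0$ delivers solutions of \qq{4.11}---which relies on the uniqueness and positivity statements of Theorem \ref{t5.1} and on continuity/strict monotonicity of the generalized principal eigenvalue. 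A secondary technical point is pinning down the negative sign of the generalized principal eigenvalue of the limiting operator in the case-(3) decay argument.
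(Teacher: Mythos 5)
Your proposal follows essentially the same structure as the paper's proof of Theorem \ref{th4.3}: split into the totals $H=H_u+H_i$, $V=V_u+V_i$ (scalar logistic, Theorem \ref{t3.3}), sandwich the infected pair $(H_i,V_i)$ between the $\sigma$-perturbed cooperative control problems \qq{4.16}/\qq{4.28}, and pass $\sigma\to0$ using the monotonicity of $(\mathsf{H}_i^\sigma,\mathsf{V}_i^\sigma)$ in $\sigma$ together with the continuity and uniqueness statements of Lemma \ref{l4.1}. Your lower bound sets up the comparison from a small constant multiple $\delta(\phi^\ep_1,\phi^\ep_2)$, whereas the paper flows \qq{4.28} forward from $(H_i(\cdot,T_\tau),V_i(\cdot,T_\tau))$ after first proving (their Step 2) that this initial datum lies below $(\mathsf{H}-\tau\phi^\ep_1,\mathsf{V}-\tau\phi^\ep_2)$; both routes work, and you correctly flag in your obstacle discussion the need to keep the comparison inside the invariant interval on which {\bf(H1)}--{\bf(H3)} apply. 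The substantive difference is that you give a proof of part (3), which the paper leaves implicit: their proof opens by assuming $\lm({\cal G}_1),\lm({\cal G}_2)>0$ and ends after case (2). Your argument for $V_i\to0$ when $\lm({\cal G}_1)\le0<\lm({\cal G}_2)$ — observing that the eventual linear coefficient $\hat c=-d_2^*-\mu_2-c_2\mathsf{V}=(g_2-c_2\mathsf{V})-a_2$ lies strictly below the coefficient for which $\mathsf{V}$ is a null eigenfunction (hence the associated operator has negative generalized principal eigenvalue), and then using a decaying supersolution from Theorem \ref{thA} — is correct; just be explicit that the nonvanishing but small source $\ell_2 H_iV_u\le\delta$ (note $V_u$, not $V$) yields $\limsup_{t\to\infty}V_i\le C\delta$ for each $\delta>0$, and $\delta\to0$ completes the argument.
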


\begin{proof} Throughout this proof, the matrix $B_\sigma(x)$ is defined by \qq{4.15}. Let $(H_u, H_i, V_u, V_i)$ be the unique solution of \qq{4.2}. Then $H=H_u+H_i$ and $V=V_u+V_i$ satisfy, respectively, \qq{4.3} and \qq{4.4} with $T=\infty$. According to the conditions $\lm({\cal G}_1)>0$, $\lm({\cal G}_2)>0$, we have that, by Theorem \ref{t3.3},
 \bes
 \lim_{t\to+\yy}H(x,t)=\mathsf{H}(x),\;\;\;\lim_{t\to+\yy}V(x,t)=\mathsf{V}(x)
 \;\;\;{\rm uniformly\,\, in}\;\;\boo.\lbl{4.25}\ees

Similar to the above, let $\phi^\ep_k>0$ with $\|\phi^\ep_k\|_{G^\infty(\oo)}=1$ be the unique solution of $(4.13_k)$. For any given $0<\sigma\le\sigma_0$, there exists a $T_\sigma\gg 1$ such that
 \bes\left\{\begin{aligned}
&0<\mathsf{H}(x)-\sigma\phi^\ep_1(x)\le H(x,t)\le \mathsf{H}(x)+\sigma\phi^\ep_1(x),&&\!\!x\in\ol\oo,\; t\geq T_\sigma,\\[1mm]
&0<\mathsf{V}(x)-\sigma\phi^\ep_2(x)\le V(x,t)\le \mathsf{V}(x)+\sigma\phi^\ep_2(x),&&\!\!x\in\ol\oo,\; t\geq T_\sigma.
  \end{aligned}\rr.\lbl{4.26}\ees
Making use of $H_u=H-H_i$, $V_u=V-V_i$ and \qq{4.26}, we see that $(H_i, V_i)$ satisfies
 \bes\left\{\!\begin{aligned}
&H_{it}\le d_1\!\dd\int_\oo\! J_1(x,y)H_i(y,t)\dy-d^*_1(x) H_i+ \ell_1(x)\big(\mathsf{H}(x)+\sigma\phi^\ep_1(x)-H_i\big)^+V_i\\[.1mm]
 &\hspace{16mm}-\big\{\mu_1(x)+c_1(x)[\mathsf{H}(x)-\sigma\phi^\ep_1(x)]
 \big\}H_i\\[.1mm]
&\hspace{6mm}=d_1\!\dd\int_\oo\! J_1(x,y)H_i(y,t)\dy+b_{11}^\sigma(x) H_i+ \ell_1(x)\big(\mathsf{H}(x)+\sigma\phi^\ep_1(x)-H_i\big)^+V_i,
&&\!\!x\in\ol\oo,\; t\geq T_\sigma,\\[.1mm]
&V_{it}\le d_2\!\dd\int_\oo\! J_2(x,y)V_i(y,t)\dy-d^*_2(x)V_i+ \ell_2(x)\big(\mathsf{V}(x)+\sigma\phi^\ep_2(x)-V_i\big)^+H_i\\[.1mm]
 &\hspace{16mm}-\big\{\mu_2(x)+c_2(x)[\mathsf{V}(x)-\sigma\phi^\ep_2(x)]\big\}V_i\\[.1mm]
&\hspace{6mm}=d_2\!\dd\int_\oo\! J_2(x,y)V_i(y,t)\dy+b_{22}^\sigma(x)V_i+ \ell_2(x)\big(\mathsf{V}(x)+\sigma\phi^\ep_2(x)-V_i\big)^+H_i,
&&\!\!x\in\ol\oo,\; t\geq T_\sigma,\\
&H_i\le H\le \mathsf{H}(x)+\sigma\phi^\ep_1(x),\;\;
V_i\le V\le \mathsf{V}(x)+\sigma\phi^\ep_2(x),&&\!\! x\in\boo,\,t=T_\sigma,
 \end{aligned}\rr.\lbl{4.27}\ees
where $b_{11}^\sigma(x)$ and $b_{22}^\sigma(x)$ are defined by \qq{4.14}.

(1)\, Assume that $\lm({\cal G}_1)>0$, $\lm({\cal G}_2)>0$ and $\lm({\mathscr B})>0$.

{\it Step 1}. Since $\lm({\mathscr B})>0$, there exists a $0<\sigma_0\ll 1$ such that $\lm({\mathscr B}_\sigma)>0$ for all $|\sigma|\le\sigma_0$ by the continuity. Let $(U_\sigma,Z_\sigma)$ be the solution of
\bes\left\{\!\begin{aligned}
	& U_{\sigma t}=d_1\!\dd\int_\oo\! J_1(x,y)U_\sigma(y,t)\dy+b_{11}^\sigma(x)U_\sigma+ \ell_1(x)\big(\mathsf{H}(x)+\sigma\phi^\ep_1(x)-U_\sigma\big)^+Z_\sigma,&&\!\!x\in\oo,\; t>0,\\
	&Z_{\sigma t}=d_2\!\dd\int_\oo\! J_2(x,y)Z_\sigma(y,t)\dy+b_{22}^\sigma(x)Z_\sigma+ \ell_2(x)\big(\mathsf{V}(x)+\sigma\phi^\ep_2(x)-Z_\sigma\big)^+U_\sigma,
	&&\!\!x\in\oo,\; t>0,\\
	&(U_\sigma(x,0), Z_\sigma(x,0))=(U_0(x), Z_0(x)),&&\!\!x\in\Omega.
\end{aligned}\rr.\quad\;\;\;\;\lbl{4.28}\ees
with $U_0(x)=\mathsf{H}(x)+\sigma\phi^\ep_1(x)$,  $Z_0(x)=\mathsf{V}(x)+\sigma\phi^\ep_2(x)$. We have shown that $(\mathsf{H}+\sigma\phi^\ep_1, \mathsf{V}+\sigma\phi^\ep_2)$ is a strict upper solution of \qq{4.16} provided $|\sigma|\ll 1$ in Step 1 of the proof of Lemma \ref{l4.1}(1). In view of $\lm({\mathscr B}_\sigma)>0$, by repeating the arguments to those in Theorem \ref{thB}(1), we have
\bes
\lim_{t\to+\yy}(U_\sigma(x,t), Z_\sigma(x,t))=(\widehat{\mathsf{H}}_i^\sigma(x),
\widehat{\mathsf{V}}_i^\sigma(x))\;\;\;\mbox{uniformly in}\;\;\boo.
\lbl{4.29}\ees

On the other hand, by the comparison principle,
 \bes
 (H_i(x,t+T_\sigma),V_i(x,t+T_\sigma))\le(U_\sigma(x,t), Z_\sigma(x,t))\;\;\;\mbox{in}\;\; \boo\times[0,+\yy).
 \lbl{4.30}\ees
This combines with \qq{4.29} gives
\bes
 \limsup_{t\to+\yy}\big(H_i(x,t), V_i(x,t)\big)\leq (\widehat{\mathsf{H}}_i^\sigma(x),
 \widehat{\mathsf{V}}_i^\sigma(x))=(\mathsf{H}_i^\sigma(x), \mathsf{V}_i^\sigma(x)) \;\;\;{\rm uniformly\; in}\; \;\boo.
\lbl{4.31}\ees
Noticing that the positive solution $(\mathsf{H}_i^\sigma, \mathsf{V}_i^\sigma)$ of \qq{4.17} exists and is unique. From the expression \qq{4.14} of $b_{kl}^\sigma$, it is easy to see that $b_{kl}^\sigma$ is increasing in $\sigma$. Then $(\mathsf{H}_i^\sigma, \mathsf{V}_i^\sigma)\ge(\mathsf{H}_i, \mathsf{V}_i)$ and $(\mathsf{H}_i^\sigma, \mathsf{V}_i^\sigma)$ is increasing in $\sigma$ by the comparison principle and the uniqueness. Therefore, $\lim_{\sigma\to0^+}({\mathsf{H}_i^\sigma}, {\mathsf{V}_i^\sigma})=({\mathsf{H}^*_i}, {\mathsf{V}^*_i})$ exists and $({\mathsf{H}^*_i}, {\mathsf{V}^*_i})$ is a positive solution of \qq{4.11}. Take $\sigma=0$ in Lemma \ref{l4.1}(2) to deduce that $(\mathsf{H}^*_i, \mathsf{V}^*_i)=(\mathsf{H}_i, \mathsf{V}_i)$. Hence, $\lim_{\sigma\to0^+}(\mathsf{H}_i^\sigma, \mathsf{V}_i^\sigma)=(\mathsf{H}_i, \mathsf{V}_i)$ uniformly in $\boo$. This combined with \qq{4.31} yields
\bes
 \limsup_{t\to+\yy}(H_i(x,t), V_i(x,t))\leq(\mathsf{H}_i(x), \mathsf{V}_i(x)) \;\;\;{\rm uniformly\; in}\;\;\boo.
\lbl{4.32}\ees

{\it Step 2}. Noticing that $\mathsf{H}_i<\mathsf{H}$ and $\mathsf{V}_i<\mathsf{V}$ in $\boo$. There exists $0<\tau_0<\sigma_0$ such that\vspace{-1mm}
 \bess
 \mathsf{H}_i(x)<\mathsf{H}(x)-2\tau\phi^\ep_1(x),\; \;\; \mathsf{V}_i(x)<\mathsf{V}(x)-2\tau\phi^\ep_2(x),\;\;\;x\in\boo\qquad
   \vspace{-1mm}\eess
for all $0<\tau<\tau_0$. This combines with \qq{4.32} yields that there exists $\rrr T_\tau^*\gg 1$ such that
  \bes\begin{cases}
 H_i(x,t)<\mathsf{H}_i(x)+\tau \phi^\ep_1(x)<\mathsf{H}(x)-\tau\phi^\ep_1(x),
 \;\;x\in\boo,\; t>T_\tau^*,\\
 V_i(x,t)<\mathsf{V}_i(x)+\tau \phi^\ep_2(x)<\mathsf{V}(x)-\tau\phi^\ep_2(x),
 \;\;x\in\boo,\; t>T_\tau^*.\end{cases}\lbl{4.33} \ees

{\it Step 3}. For such $\tau$ determined in Step 2. Thanks to \qq{4.25}, there exists $\hat T_\tau\gg 1$ such that
 \bess\begin{aligned}
&0<\mathsf{H}(x)-\tau\phi^\ep_1(x)\le H(x,t)\le \mathsf{H}(x)+\tau\phi^\ep_1(x),&&\!\!x\in\ol\oo,\; t\geq\hat T_\tau,\\
&0<\mathsf{V}(x)-\tau\phi^\ep_2(x)\le V(x,t)\le \mathsf{V}(x)+\tau\phi^\ep_2(x),&&\!\!x\in\ol\oo,\; t\geq\hat T_\tau.
  \end{aligned}\eess
Noticing that $H_u=H-H_i$ and $V_u=V-V_i$. Let $T_\tau=T_\tau^*+\hat T_\tau$. Take advantage of \qq{4.33}, it follows that $(H_i, V_i)$ satisfies
 \bess
H_{it}&\ge& d_1\!\dd\int_\oo\! J_1(x,y)H_i(y,t)\dy-d^*_1(x) H_i+ \ell_1(x)(\mathsf{H}(x)-\tau\phi^\ep_1(x)-H_i)^+V_i\\[1mm]
 &&\hspace{3mm}-\big\{\mu_1(x)+c_1(x)[\mathsf{H}(x)
 +\tau\phi^\ep_1(x)]\big\}H_i\\[1mm]
&=&d_1\!\dd\int_\oo\! J_1(x,y)H_i(y,t)\dy+b_{11}^{-\tau}(x) H_i+ \ell_1(x)(\mathsf{H}(x)-\tau\phi^\ep_1(x)-H_i)^+V_i,\;\;\;x\in\boo,\;
t\ge T_\tau,\\[1mm]
V_{it}&\ge& d_2\!\dd\int_\oo\! J_2(x,y)V_i(y,t)\dy-d^*_2(x)V_i+ \ell_2(x)(\mathsf{V}(x)-\tau\phi^\ep_2(x)-V_i)^+H_i\\[1mm]
&&\hspace{3mm}-\big\{\mu_2(x)+c_2(x)[\mathsf{V}(x)+\tau\phi^\ep_2(x)]\big\}V_i\\[1mm]
&=&d_2\!\dd\int_\oo\! J_2(x,y)V_i(y,t)\dy+b_{22}^{-\tau}(x)V_i+ \ell_2(x)(\mathsf{V}(x)-\tau\phi^\ep_2(x)-V_i)^+H_i,\;\;\;x\in\boo,\;
t\ge T_\tau.
 \eess
Let $(U_{-\tau},Z_{-\tau})$ be the solution of \qq{4.28} with $\sigma=-\tau$ and $(U_0(x),Z_0(x))=\big(H_i(x,T_\tau), V_i(x,T_\tau)\big)$. Then we have
 \bes
 (H_i(x,t+T_\tau),V_i(x,t+T_\tau))\ge(U_{-\tau}(x,t), Z_{-\tau}(x,t))\;\;\;\mbox{in}\;\; \boo\times[0,+\yy)
 \lbl{4.34}\ees
by the comparison principle. As we have known that $\lm({\mathscr B}_\sigma)>0$, the corresponding limit \qq{4.29} holds by repeating the arguments to those in Theorem \ref{thB}(1). This fact combines with \qq{4.34} yields
 \bes
 \liminf_{t\to+\yy}\big(H_i(x,t), V_i(x,t)\big)\geq (\widehat{\mathsf{H}}_i^{-\tau}(x),
 \widehat{\mathsf{V}}_i^{-\tau}(x))=(\mathsf{H}_i^{-\tau}(x), \mathsf{V}_i^{-\tau}(x)) \;\;\;{\rm uniformly\; in}\; \;\boo.\vspace{-1mm}
 \lbl{4.35}\ees
Similar to the arguments in Step 1, we have $\lim_{\tau\to0^+}(\mathsf{H}_i^{-\tau}, \mathsf{V}_i^{-\tau})=(\mathsf{H}_i, \mathsf{V}_i)$ uniformly in $\boo$. Hence, by using of \qq{4.35},
  \bess
 \liminf_{t\to+\yy}\big(H_i(x,t), V_i(x,t)\big)\geq \big(\mathsf{H}_i(x), \mathsf{V}_i(x)\big) \;\;\;{\rm uniformly\; in}\; \;\boo.
\eess
This, together with \qq{4.32}, yields that
 \[\lim_{t\to+\yy}\big(H_i(x,t),V_i(x,t)\big)=\big(\mathsf{H}_i(x),\mathsf{V}_i(x)\big) \;\;\;{\rm uniformly\; in}\; \;\boo.\]
Using \qq{4.25} we conclude that \qq{4.23} holds. The proof of conclusion (1) is complete.

(2)\, Assume that $\lm({\cal G}_1)>0$, $\lm({\cal G}_2)>0$ and $\lm({\mathscr B})\le 0$. In Step 2 of the proof of Lemma \ref{l4.1}(1) we have shown that $(\mathsf{H}+\sigma\phi^\ep_1, \mathsf{V}+\sigma\phi^\ep_2)$ is a strict upper solution of \qq{4.16}.

{\it Case 1: $\lm({\mathscr B})<0$}. By the continuity, there exists a $0<\sigma_0\ll 1$ such that $\lm({\mathscr B}_\sigma)<0$ for all $0\le\sigma\le\sigma_0$. Similar to the proof of Theorem \ref{t5.1}(ii) we can show that \qq{4.17} has no positive solution. This implies that \qq{4.16} has no continuous positive solution.
In fact, if $(\widehat{{\mathsf H}}_i^\ep, \widehat{{\mathsf V}}_i^\ep)\in [C(\boo)]^2$ is a positive solution of \qq{4.16}, then $(\widehat{{\mathsf H}}_i^\ep, \widehat{{\mathsf V}}_i^\ep)<({\mathsf H}+\ep\phi_1, {\mathsf V}+\ep\phi_2)$ by the comparison principle since $({\mathsf H}+\ep\phi_1, {\mathsf V}+\ep\phi_2)$ is a strict upper solution of \qq{4.16}. Therefore, $(\widehat{{\mathsf H}}_i^\ep, \widehat{{\mathsf V}}_i^\ep)$ is also a positive solution of \qq{4.17}, which is a contradiction.

Noticing that \qq{4.26} and \qq{4.27} always hold. Let $(U_\sigma,Z_\sigma)$ be the unique positive solution of \qq{4.28} with $(U_0(x), Z_0(x))=\big(\mathsf{H}(x)+\sigma\phi^\ep_1(x), \mathsf{V}(x)+\sigma\phi^\ep_2(x)\big)$. Then \qq{4.30} holds. Moreover,
 \[\lim_{t\to+\yy}(U_\sigma(x,t),Z_\sigma(x,t)\big)=(0,0) \;\;\text{ uniformly in } \; \overline{\Omega}\]
by Theorem \ref{thB}(2). This combines with \qq{4.30} derives
 \bes
 \lim_{t\to+\yy}(H_i(x,t), V_i(x,t))=(0, 0)\;\;\;{\rm uniformly\; in}\; \;\boo.
 \lbl{4.36}\ees
Therefore, \qq{4.24} holds by \qq{4.25}.

{\it Case 2: $\lm({\mathscr B})=0$}. There exists a $0<\sigma_0\ll 1$ such that $\lm({\mathscr B}_\sigma)>0$ for all $0<\sigma\le\sigma_0$ by the continuity. Thanks to Theorem \ref{thB}(1),  the problem \qq{4.17} has a unique continuous positive solution $(\mathsf{H}_i^\sigma, \mathsf{V}_i^\sigma)$ and $(\mathsf{H}_i^\sigma, \mathsf{V}_i^\sigma)<(\mathsf{H}+\sigma\phi^\ep_1, \mathsf{V}+\sigma\phi^\ep_2)$. Similar to the proof of \qq{4.32} we can show that
\bes
 \limsup_{t\to+\yy}\big(H_i(x,t), V_i(x,t)\big)\leq\big(\mathsf{H}_i^\sigma(x), \mathsf{V}_i^\sigma(x)\big) \;\;\;{\rm uniformly\; in}\; \;\boo.
\lbl{4.37}\ees

Since $b_{kl}^\sigma(x)$ is increasing in $\sigma$, it is easy to see that $(\mathsf{H}_i^{\sigma'}, \mathsf{V}_i^{\sigma'})$ is an strict upper solution of \qq{4.17} when $\sigma'>\sigma$. So, $(\mathsf{H}_i^\sigma, \mathsf{V}_i^\sigma)$ is strict increasing in $\sigma>0$, and then the limit
 \bes\lim_{\sigma\to 0^+}\big(\mathsf{H}_i^\sigma(x), \mathsf{V}_i^\sigma(x)\big)=\big(\mathsf{H}_i^*(x), \mathsf{V}_i^*(x)\big)\lbl{4.38}\ees
exists and is a nonnegative solution of \qq{4.11}, and $(\mathsf{H}_i^*, \mathsf{V}_i^*)\le (\mathsf{H}, \mathsf{V})$. By sequentially using Lemma \ref{l4.2} and Theorem \ref{tA.2}, we can derive that
$(\mathsf{H}_i^*, \mathsf{V}_i^*)$ is continuous in $\boo$. According to Theorem \ref{th3.1}, either $(\mathsf{H}_i^*,\,\mathsf{V}_i^*)\equiv 0$ or $(\mathsf{H}_i^*,\,\mathsf{V}_i^*)\gg 0$ in $\boo$. Since the problem \qq{4.11} has no continuous positive solution, there holds $(\mathsf{H}_i^*,\,\mathsf{V}_i^*)\equiv 0$ in $\boo$. This combines with \qq{4.37} and \qq{4.38} implies that \qq{4.36} holds. The proof is complete.
 \end{proof}\vspace{-1mm}

\section{Discussions}\label{sec:discussion}\renewcommand {\baselinestretch}{1.2}

\subsection{General discussion}
In this paper, we studied systems of nonlocal operators with cooperative and irreducible structure. By constructing the monotonic upper and lower control systems, we obtained the approximation and characterization of the generalized principal eigenvalue and give two applications. Through these two examples, we saw that the generalized principal eigenvalue plays the same role as the usual principal eigenvalue.

In order to obtain the existence of principal eigenvalue of operator ${\mathscr B}$, appropriate additional conditions need to be attached to $B(x)$, $d_i$ or $J_i(x,y)$ (see, for example, \cite{BZ07, Cov, SX15, B-JFA16, LCWdcds17, SWZ23, Zhang24}). For instance, one of the following conditions:\vspace{-2mm}
 \begin{enumerate}
\item[{\rm (i)}] There exists an open set $\dd\Omega_0 \subset \Omega$ such that  $[\max_{\ol\oo}s(B(x))-s(B(x))]^{-1} \not \in L^1(\Omega_0)$.\vskip 3pt
\item[{\rm(ii)}] $d_i$ are suitable large.\vskip 3pt
\item[{\rm (iii)}] $B(x)$ is a suitable small perturbation of a constant matrix.\vskip 3pt
\item[{\rm (iv)}] $J_i(x,y)=\frac{1}{\delta^N}\tilde{J}_i(\frac{x-y}{\delta})$ with  ${\rm supp}(\tilde{J}_i)=B(0,1)=\{z \in \mathbb{R}^N: \Vert z\Vert <1\}$ and $\delta$ is suitable small.
	\end{enumerate}\vspace{-2mm}

For the given systems of nonlocal operators with a cooperative and irreducible structure, these conditions may be satisfied. However, for the system \(\qq{4.11}\), we only know that \(\mathsf{H}(x)\) and \(\mathsf{V}(x)\) are positive solutions of \((4.6_k)\) with \(k=1,2\), respectively, without understanding their further properties. Therefore, we cannot make additional assumptions about \(\mathsf{H}(x)\) and \(\mathsf{V}(x)\) to ensure that one of these additional conditions holds. This also indicates that the method proposed in this article is highly effective.
We suspect that the present method also applies to periodic problems (time or space periods), and we will leave these topics for future work.

\subsection{Partially degenerate case}

In this subsection, we demonstrate that our present method can also be used to deal with the partially degenerate case, that is, there exists $1\le n_1<n$ such that $d_1, \cdots, d_{n_1}$ are positive and $d_{n_1+1},\cdots,d_n$ are zero. Write ${\cal S}_n=\{1,\cdots,n_1\}$ and ${\cal S}_d=\sss\setminus{\cal S}_n$. We introduce following assumptions {\bf(B$'$)}, {\bf(H2$'$)} and {\bf(H4$'$)}:\vspace{-2mm}
\begin{itemize}
	\item[{\bf(B$'$)}] $b_{ik}\in C(\boo)$ for all $i,k\in\sss$ and $B(x)=(b_{ik}(x))_{n \times n}$ is a cooperative and irreducible matrix  for all $x\in\boo$.\vvv
	\item[{\bf(H2$'$)}]  $\kk(\partial_{u_k}f_i(x,u)\rr)_{n\times n}$ is irreducible for all $x\in\boo$ and $u \geq 0$.\vvv
	\item[{\bf(H4$'$)}]For any $ i\in{\cal S}_d$, there exists $l\in {\cal S}_n$ such that $\partial_{u_l}f_i(x,u)$ is positive for all $x\in\boo$ and $u\ge 0$.
\end{itemize}\vspace{-2mm}

Thanks to \cite[Theorem A]{Zhang24}, the conclusion of Lemma \ref{le2.1}, and hence, Theorem \ref{thA} is valid when the assumption {\bf(B)} is replaced by {\bf(B$'$)} for the partially degenerate case.

We also have the following strong maximum principle for the partially degenerate case.
\vspace{-2mm}
\begin{theo}{\rm(Strong maximum principle)} Let $p_{ik}\in L^\yy(\oo)$ and $(p_{ik}(x))_{n\times n}$ be cooperative, i.e. $p_{ik}(x)\ge 0$ in $\oo$ when $i\not=k$. Assume that $U\in[L^\yy(\oo)]^n$, $U\ge 0$ in $\boo$ and satisfies
	\bess\begin{cases}
		d_i\dd\int_\oo J_i(x,y)U_i(y)\dy-d^*_i(x)U_i+\sum_{k=1}^np_{ik}(x)U_k\le 0,\;\;\; x\in\ol\oo, \; i\in{\cal S}_n,\\[4mm]
		\hfill \dd\sum_{k=1}^np_{ik}(x)U_k\le 0,\;\;\; x\in\ol\oo, \; i\in{\cal S}_d.
	\end{cases}	\eess
	If $(p_{ik}(x))_{n\times n}$ is irreducible for all $x \in \boo$, then either $U_i\equiv 0$ in $\boo$ for all $i\in\sss$, or $U_i>0$ in $\boo$ for all $i\in{\cal S}$. If, in addition, $U_i>0$ in $\boo$ for all $i\in{\cal S}$, then $\inf_{\oo}U_i>0$ for all $i\in{\cal S}$ when one of the following two statements is valid:\vspace{-2mm}
	\begin{itemize}
		\item[\rm (a)] for any $ i\in {\cal S}_d$ there exists $l \in {\cal S}_n$ such that $\inf_\oo p_{il}>0$;\vskip 3pt
		\item[\rm (b)] $U_i$ is semi-lower continuous for all $i\in{\cal S}_d$.
	\end{itemize}
\end{theo}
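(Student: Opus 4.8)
The plan is to follow the scheme of Theorem~\ref{th3.1}, but to split the index set into the nondegenerate block ${\cal S}_n$ and the degenerate block ${\cal S}_d$ and treat them with different tools. For $i\in{\cal S}_n$, cooperativity of $(p_{ik})$ together with $U\ge 0$ gives $\sum_{k\ne i}p_{ik}(x)U_k(x)\ge 0$, so the $i$-th inequality forces $d_i\int_\oo J_i(x,y)U_i(y)\dy-d^*_i(x)U_i+p_{ii}(x)U_i\le 0$, and Lemma~\ref{l3.1} yields, for each such $i$, that \emph{either} $U_i\equiv 0$ in $\boo$, \emph{or} $U_i>0$ in $\boo$ and $\inf_\oo U_i>0$. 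For $i\in{\cal S}_d$ only the algebraic inequality $\sum_k p_{ik}(x)U_k(x)\le 0$ is at hand; rewriting it as $-p_{ii}(x)U_i(x)\ge\sum_{k\ne i}p_{ik}(x)U_k(x)\ge 0$ gives the key \emph{propagation} fact: at any point $x$ where a neighbour is switched on, i.e. $p_{ik}(x)>0$ and $U_k(x)>0$ for some $k\ne i$, necessarily $p_{ii}(x)<0$ and $U_i(x)\ge p_{ik}(x)U_k(x)/(-p_{ii}(x))>0$.

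The dichotomy is obtained from one observation valid for every $i\in\sss$: if $U_i(x)=0$, then substituting into the $i$-th inequality (the $d^*_iU_i$ and $p_{ii}U_i$ terms vanish, all remaining summands are $\ge 0$) forces $p_{ik}(x)U_k(x)=0$ for all $k\ne i$. Thus, with $Q(x)=\{j:U_j(x)=0\}$ and $P(x)=\{k:U_k(x)>0\}$ — which partition $\sss$ — no entry $p_{jk}(x)$ with $j\in Q(x)$, $k\in P(x)$ is positive; since $(p_{ik}(x))$ is irreducible for \emph{every} $x$, this forces $Q(x)=\emptyset$ or $P(x)=\emptyset$, i.e. at (a.e.) $x$ either $U(x)\gg 0$ or $U(x)=0$. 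If $U\not\equiv 0$ the former set is nonnull, so $U_i\not\equiv 0$ for every $i\in{\cal S}_n$, hence $U_i>0$ throughout $\boo$ for $i\in{\cal S}_n$ by Lemma~\ref{l3.1}. To reach strict positivity of the degenerate components at an \emph{arbitrary} $x_0\in\boo$, start from $P_*:={\cal S}_n$ (already positive at $x_0$) and close it under the rule ``$j\in{\cal S}_d$ with $p_{jk}(x_0)>0$ for some $k\in P_*\Rightarrow j\in P_*$''; the propagation fact shows $U_j(x_0)>0$ for every $j\in P_*$, and if $P_*\ne\sss$ then no entry of $(p_{ik}(x_0))$ runs from $\sss\setminus P_*$ into $P_*$, contradicting irreducibility at $x_0$. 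Hence $P_*=\sss$ and $U_i(x_0)>0$; so $U_i>0$ in $\boo$ for every $i\in\sss$.

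It remains to establish $\inf_\oo U_i>0$ for $i\in{\cal S}_d$ (for $i\in{\cal S}_n$ it is part of Lemma~\ref{l3.1}). Under (a), choose $l\in{\cal S}_n$ with $\inf_\oo p_{il}>0$; then $-p_{ii}(x)U_i(x)\ge p_{il}(x)U_l(x)\ge(\inf_\oo p_{il})(\inf_\oo U_l)=:c>0$ for a.e. $x$, and since $p_{ii}\in L^\infty(\oo)$ this gives $U_i(x)\ge c/\|p_{ii}\|_{L^\infty(\oo)}>0$. Under (b), $U_i$ is lower semicontinuous and strictly positive on the compact set $\boo$, hence attains a positive minimum there.

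The step I expect to be delicate is passing from ``$U_i>0$ almost everywhere'' to ``$U_i>0$ everywhere'' for the degenerate indices: a degenerate row is a purely local algebraic constraint and, unlike the nonlocal operators handled by Lemma~\ref{l3.1}, does not self-improve. The saturation/graph argument is what circumvents this, and its validity rests on invoking irreducibility of $(p_{ik}(x))$ \emph{at the individual point} $x_0$, not through one fixed global partition of $\sss$; carefully tracking the pointwise-versus-a.e. distinction (since $p_{ik}$, $U_k$ are only $L^\infty$) is the main technical hazard.
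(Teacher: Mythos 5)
Your proposal is correct and follows essentially the same route as the paper. The paper's own proof is extremely terse: it appeals to ``repeating the arguments of Theorem \ref{th3.7}'' (almost certainly a typo for Theorem \ref{th3.1}, the nondegenerate strong maximum principle), and then handles the infimum statement for the degenerate components in exactly the two ways you describe, i.e.\ under (a) a one-line estimate $0\ge p_{ii}(x)U_i(x)+\inf_\oo p_{il}\inf_\oo U_l$, and under (b) the observation that a strictly positive lower semicontinuous function on a compact set has a positive infimum. What you supply is the content of that ``repeat the argument'' step: the pointwise partition $Q(x)$, $P(x)$, the observation that $U_i(x)=0$ annihilates all the off-diagonal products $p_{ik}(x)U_k(x)$ (using nonnegativity of the nonlocal integral when $i\in{\cal S}_n$), and the irreducibility-at-every-point argument that forces $Q(x)=\emptyset$ or $P(x)=\emptyset$. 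This is exactly the mechanism the paper leaves implicit.

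Two small remarks. First, the graph/saturation closure you invoke at the end is redundant: once you have shown $U_j(x_0)>0$ for $j\in{\cal S}_n$ at an arbitrary $x_0$, the pointwise dichotomy $Q(x_0)=\emptyset$ or $P(x_0)=\emptyset$ that you already proved, together with $P(x_0)\supset{\cal S}_n\ne\emptyset$, immediately yields $Q(x_0)=\emptyset$, i.e.\ $U(x_0)\gg 0$; there is no need to trace paths through the irreducibility graph. Second, the concluding worry about passing from ``a.e.''\ to ``everywhere'' is overcautious here: the theorem explicitly assumes the differential/algebraic inequalities hold for every $x\in\boo$ and that $(p_{ik}(x))$ is irreducible for every $x\in\boo$, so the argument is genuinely pointwise; the $L^\infty$ label on the coefficients only affects the uniform bound $\|p_{ii}\|_{L^\infty}$ used in case (a), which is fine.
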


\begin{proof} By repeating the arguments to those in Theorem \ref{th3.7}, it is easy to verify that either $U_i\equiv 0$ in $\boo$ for all $i\in\sss$, or $U_i>0$ in $\boo$ for all $i\in{\cal S}$ and $\inf_{\oo}U_i>0$ for all $i\in{\cal S}_n$.
	
	In the case where (a) holds, due to
	\bess
	0\ge p_{ii}(x)U_i(x)+\sum_{k\not=i} p_{ik}(x)U_k\ge
	p_{ii}(x)U_i(x)+\inf_\oo p_{il}\inf_\oo U_l\eess
	for all $x\in\ol\oo$ and $i\in{\cal S}_d$, we have $\inf_{\oo}U_i>0$ for all $i\in{\cal S}_d$.
	
	In the case where (b) holds, assume that there exists $i\in{\cal S}_d$ such that $\inf_{\Omega} U_i=0$. Since $U_i$ is semi-lower continuous, then there exists $x_0$ such that $U_i(x_0)=0$, which is a contradiction.
\end{proof}\vspace{-1mm}

Based on the above strong maximum principle, Theorems \ref{th3.2}, \ref{th3.3} and \ref{th3.4} remain valid when the assumption {\bf (H2)} is replaced with the condition {\bf (H2$'$)} and additionally {\bf (H4$'$) holds.} Furthermore, Theorems \ref{thB}(1) and (2) hold when the assumption {\bf (H2)} is replaced with  the condition {\bf (H2$'$)}, while Theorem \ref{thB}(3) is valid when {\bf (H2)} is replaced with {\bf (H2$'$)} and additionally {\bf (H4$'$)} holds.

\subsection{More applications}
Of course, our method can be used to deal with the nonlocal dispersal version of the model studied in \cite{MWW18, Wang24}. In the following we show that the present method is also effective for more models.

\noindent{\bf May-Nowak model }

Let $u(x,t), v(x,t)$ and $w(x,t)$ be the densities of
healthy uninfected immune cells, infected immune cells, and virus particles. Based on the simple May-Nowak ODE model (\cite{Nowak-B96,Bo97}), the reaction diffusion May-Nowak model can be written as
 \bes\begin{cases}
  u_t=d_1\Delta u-a_1u-b uw+\varphi(x),\;\;&x\in\Omega,\;\;t>0,\\
 v_t=d_1\Delta v-a_1v+b uw,&x\in\Omega,\;\;t>0,\\
 w_t=d_2\Delta w-a_2w+\gamma v,&x\in\Omega,\;\;t>0,
 \end{cases}\lbl{5.1}\ees
where $\varphi(x)$ is a source term. The basic setting of this model is that the healthy uninfected cells are supplied at rate $\varphi$ and become infected on contact with virus at rate $buw$ (the infected cells are supplied at rate $buw$ at the same time), the virus particles are produced by the infected cells at rate $\gamma$, cells and virus particles diffuse with rates $d_1, d_2$, and die linearly with rates $a_1, a_1$.

The nonlocal dispersal version of \qq{5.1} with coefficients depending on $x$ becomes
\bes\begin{cases}
u_t=d_1\dd\int_\oo J_1(x,y)u(y,t)\dy-d_1^*(x)u-a_1(x)u-b(x) uw+\varphi(x),\;\;&x\in\boo,\;\;t>0,\\[3mm]
v_t=d_1\dd\int_\oo J_1(x,y)v(y,t)\dy-d_1^*(x)v-a_1(x)v+b(x)uw,&x\in\boo,\;\;t>0,\\[3mm]
w_t=d_2\dd\int_\oo J_2(x,y)w(y,t)\dy-d_2^*(x)w-a_2(x)w+\gamma(x) v,&x\in\boo,\;\;t>0,\\[1mm]
u=u_0(x)>0,\;\;v=v_0(x)>0,\;\;w=w_0(x)>0,&x\in\boo,
 \end{cases}\lbl{5.2}\ees
where $a_1, a_2, b, \vp$ and $\gamma$ are all continuous positive functions in $\boo$.

It is obvious that the problem
 \bess
 d_1\dd\int_\oo J_1(x,y)Z(y)\dy-d_1^*(x)Z-a_1(x)Z+\varphi(x)=0,\;\;\;x\in\boo
 \eess
has a unique positive solution $Z(x)$. Set
 \[B(x)=\left(\begin{array}{cc}
 -\big(d_1^*(x)+a_1(x)\big)\;&b(x)Z(x)\\
 \gamma(x)\;&\;-\big(d_2^*(x)+a_2(x)\big)\end{array}\right).\]
Similar to the discussions in Section \ref{S5} we have the following conclusions.
\vspace{-2mm}
 \begin{enumerate}[$(1)$]
 \item If $\lm({\mathscr B})>0$, then \qq{5.2} has a unique bounded positive equilibrium solution $(U(x), V(x), W(x))$ which is continuous in $\boo$, and the solution $(u,v,w)$ of \qq{5.2} satisfies
   \[\lim_{t\to+\yy}(u(x,t), v(x,t), w(x,t))=(U(x), V(x), W(x))\;\;\;\mbox{uniformly in}\;\;\boo.\]
 \item If $\lm({\mathscr B})< 0$, then \qq{5.2} has no positive equilibrium solution, and the solution $(u,v,w)$ of \qq{5.2} satisfies
  \[\lim_{t\to+\yy}(u(x,t), v(x,t), w(x,t))=(Z(x), 0, 0)\;\;\;\mbox{uniformly in}\;\;\boo.\]
 \end{enumerate} \vspace{2mm}

\noindent{\bf Capasso and Maddalena model}

Capasso and Maddalena \cite{CM81} considered the reaction-diffusion system
 \bes\begin{cases}
u_t=d_1\Delta u-\gamma_{11}u+\gamma_{12}v, \;\;&x\in\oo,\; t>0,\\
 v_t=d_2\Delta v-\gamma_{22}v+G(u), \;\;&x\in\oo,\; t>0,\\[1mm]
 \dd\frac{\partial u}{\partial\nu}+\alpha_1u=\dd\frac{\partial v}{\partial\nu}+\alpha_2v=0\;\;&x\in\partial\oo,\; t>0,\\
 u(x,0)=u_0(x)>0,\;\;v(x,0)=v_0(x)>0,\;\;&x\in\oo,
 \end{cases}\lbl{5.3}\ees
where $u(x,t)$ and $v(x,t)$ represent, respectively, the average concentration of the infective agents (bacteria or virus) and the infective human population at location $x\in\oo$ and time $t$, with $\oo$ a bounded domain in $\mathbb{R}^N$. Constants $\gamma_{11}, \gamma_{12}$ and $\gamma_{22}$ are positive, and the nonlinear function $G$ satisfies\vspace{-2mm}
  \begin{enumerate}
 \item[{\bf(G)}] $G\in C^1([0,+\yy))$, $G(0)=0$, $G'(z)>0$ and $\frac{G(z)}z$ is strictly decreasing for $z>0$, and $  \lim_{z\to+\yy}\frac{G(z)}z<\frac{\gamma_{11}\gamma_{22}}{\gamma_{12}}$.\vspace{-2mm}
   \end{enumerate}
They found two positive constants $R_M\ge R_m$ ($R_M=R_m$ if $\alpha_1=\alpha_2$) such that \qq{5.3} has a unique positive equilibrium solution which is globally asymptotically stable when $R_m>1$, and \qq{5.3} has no positive equilibrium solution and the trivial equilibrium solution $(0,0)$ is globally asymptotically stable when $R_M<1$.

The nonlocal dispersal version of \qq{5.3} with coefficients depending on $x$ can be written as
  \bess\begin{cases}
u_t=d_1\dd\int_\oo J_1(x,y)u(y,t)\dy-d_1^*(x)u-\gamma_{11}(x)u+\gamma_{12}(x)v, \;\;&x\in\boo,\; t>0,\\[3mm]
 v_t=d_2\dd\int_\oo J_2(x,y)v(y,t)\dy-d_2^*(x)v-\gamma_{22}(x)v+G(x,u), \;\;&x\in\boo,\; t>0,\\[1mm]
 u(x,0)=u_0(x)>0,\;\;v(x,0)=v_0(x)>0,\;\;&x\in\boo.
 \end{cases}\eess
Assume that coefficients $\gamma_{ik}(x)$ and nonlinear function $G(x,u)$ has the similar properties to the case that they don't depend on $x$, i.e.,\vspace{-2mm}
 \begin{enumerate}
 \item[{\bf(G$_x$)}] $\gamma_{11}, \gamma_{12}, \gamma_{22}\in C(\boo)$ and are positive, $G\in C^{0,1}(\boo\times[0,+\yy))$, $G(x,0)=0$, $G_z(x,z)>0$ and $\frac{G(x,z)}z$ is strictly decreasing for $z>0$ for all $x\in\boo$, and \[\lim_{z\to+\yy}\frac{G(x,z)}z<\frac{\dd\min_{\boo}\gamma_{11}(x)\min_{\boo}\gamma_{22}(x)}
     {\dd\max_{\boo}\gamma_{12}(x)}\;\;\;\mbox{ uniformly in }\;\;x\in\boo.\]
   \end{enumerate}\vspace{-2mm}
If $J_i(x,y)$ is symmetric, i.e., $J_i(x,y)=J_i(y,x)$, $i=1,2$, then we can find a large constant $C$ such that $(C, \frac{\min_{\boo}\gamma_{11}(x)}{\max_{\boo}\gamma_{12}(x)}C)$ is an upper solution of the equilibrium problem
 \bes\begin{cases}
d_1\dd\int_\oo J_1(x,y)u(y,t)\dy-d_1^*(x)u-\gamma_{11}(x)u+\gamma_{12}(x)v=0, \;\;&x\in\boo,\\[3mm]
 d_2\dd\int_\oo J_2(x,y)v(y,t)\dy-d_2^*(x)v-\gamma_{22}(x)v+G(x,u)=0, \;\;&x\in\boo.
 \end{cases}\lbl{5.4}\ees
Moreover, we can determine the generalized principal eigenvalue $\lm({\mathscr B})$ of the operator ${\mathscr B}$:
 \bess\begin{cases}
{\mathscr B}_1[(u,v)]=d_1\dd\int_\oo J_1(x,y)u(y,t)\dy-d_1^*(x)u-\gamma_{11}(x)u+\gamma_{12}(x)v, \\[3mm]
{\mathscr B}_2[(u,v)]=d_2\dd\int_\oo J_2(x,y)v(y,t)\dy-d_2^*(x)v-\gamma_{22}(x)v+G(x,0)u,
 \end{cases}\eess
such that the problem \qq{5.4} has a unique positive solution $(U(x), V(x))\in[C(\boo)]^2$ which is globally asymptotically stable if $\lm({\mathscr B})>0$, and \qq{5.4} has no positive solution and the trivial equilibrium solution $(0,0)$ is globally asymptotically stable when $\lm({\mathscr B})< 0$. The details are omitted here.

\noindent{\bf Benthic-drift model}

Based on  \cite{PLNL2005TPB,HJL2016SIADS}, we consider the population dynamics described by the following benthic-drift model in a river
\begin{equation}\label{equ:benthic-drift:local}
\begin{cases}
	u_t=du_{xx}-\alpha u_x-m_d(x)u-\sigma u+\frac{A_b(x)}{A_d(x)}\mu v, & x\in(0,L),~t>0,\\
	v_t=g(x,v)v-m_b(x)v+\frac{A_d(x)}{A_b(x)}\sigma u-\mu v, & x\in[0,L],~t>0,\\
	du_x(0,t)-\alpha u(0,t)=\alpha b_uu(0,t), & t>0,\\
	du_x(L,t)-\alpha u(L,t)=-\alpha b_du(L,t), & t>0,\\
	u(x,0)=u_{0}(x),~v(x,0)=v_0(x), & x\in(0,L).\\
\end{cases}
\end{equation}
Here $u(x,t)$ and $v(x,t)$ represent the population densities at location $x$ and time $t$ in the drift and benthic zone, respectively.
The diffusion rate and advection rate of the population in the drift zone are denoted by $d$ and $\alpha$, respectively. The drift and benthic population release rate are denoted by $\sigma$ and $\mu$, respectively.  The cross-sectional areas of the drift zone and the benthic zone are represented by $A_d(x)$ and $A_b(x)$, respectively. Additionally, $m_d(x)$ and $m_b(x)$ indicate the mortality rates of the drift and benthic populations at location $x$, respectively. The function $g(x,v)$ represents the per capita increase rate of the benthic population at location $x$. In this paper, we make the following assumptions:
\begin{itemize}\vspace{-2mm}
\item [\bf (A1)] $A_d$, $A_b$, $m_d$ and $m_b$ are positive, continuously differentiable functions on $[0,L]$.
\item [\bf (A2)] $g(x,v)$ is continuously differentiable with respect to $x\in[0,L]$ and $v\in[0,+\infty)$. Moreover, $g(x,v)$ is strictly decreasing with respect to $v$. For each $x$, there exists a unique $K(x)>0$ such that $g(x,K(x))-m_b(x)\leq0$.
\end{itemize}\vspace{-2mm}

The nonlocal version of \eqref{equ:benthic-drift:local} without advection can be written as
\begin{equation}\label{equ:benthic-drift:nonlocal}
	\begin{cases}
		u_t=d\dd\int_\oo J_1(x,y)u(y,t)\dy-d_1^*(x)u-m_d(x)u-\sigma u+\frac{A_b(x)}{A_d(x)}\mu v, & x\in(0,L),~t>0,\\
		v_t=g(x,v)v-m_b(x)v+\frac{A_d(x)}{A_b(x)}\sigma u-\mu v, & x\in[0,L],~t>0,\\
		u(x,0)=u_{0}(x),~v(x,0)=v_0(x), & x\in(0,L).\\
	\end{cases}
\end{equation}
If $J_1(x,y)$ is symmetric, i.e., $J_1(x,y)=J_1(y,x)$, then by repeating the argument to those in \cite[Lemma 3.2]{LZZ2017SIMA} the following equilibrium problem
\begin{equation}\label{equ:benthic-drift:nonlocal:E}
	\begin{cases}
		d\dd\int_\oo J_1(x,y)u(y)\dy-d_1^*(x)u-m_d(x)u-\sigma u+\frac{A_b(x)}{A_d(x)}\mu v=0, & x\in(0,L),\\
		g(x,v)v-m_b(x)v+\frac{A_d(x)}{A_b(x)}\sigma u-\mu v=0, & x\in[0,L],
	\end{cases}
\end{equation}
 has a strictly positive upper solution. We can determine the generalized principal eigenvalue $\lm({\mathscr B})$ of the operator ${\mathscr B}=({\mathscr B}_1, {\mathscr B}_2)$:
\bess\begin{cases}
	{\mathscr B}_1[(u,v)]=d_1\dd\int_\oo J_1(x,y)u(y,t)\dy-d_1^*(x)u-m_d(x)u-\sigma u+\frac{A_b(x)}{A_d(x)}\mu v, \\[3mm]
	{\mathscr B}_2[(u,v)]=g(x,0)v-m_b(x)v+\frac{A_d(x)}{A_b(x)}\sigma u-\mu v.
\end{cases}\eess
Therefore, we have the following results:
\begin{itemize}
	\item[(1)] If $\lm({\mathscr B})>0$, then \eqref{equ:benthic-drift:nonlocal:E} has a globally asymptotically stable positive solution.
	\item[(2)] $\lm({\mathscr B})<0$, then \eqref{equ:benthic-drift:nonlocal:E} has no positive solution, and zero solution is globally asymptotically stable.
\end{itemize}

We also remark that there is no standard method to characterize nonlocal advection, so we omit the advection in our nonlocal version.

\appendix\def\theequation{\Alph{section}.\arabic{equation}}
\section{Appendix}

In this appendix we shall prove Lemma \ref{le2.1}. To this end, we first prove a lemma.

\begin{lem}\label{lem:LJG}Assume that the condition {\bf(B)} holds, and define operators ${\cal B}$ and $\cal J$ by
 \[[{\cal B}](x)=B(x)\]
and
 \[{\cal J}[u]=({\cal J}_1[u],\cdots,{\cal J}_n[u]),\;\;\;
	{\cal J}_i[u]=d_i\dd\int_\oo J_i(x,y)u_i(y)\dy,\]
respectively. Then the following statements are valid:\vspace{-2mm}
 \begin{itemize}
\item[\rm (i)] $\sigma_e({\mathscr B})=\cup_{x\in\boo}\sigma(B(x))$, where $\sigma_e({\mathscr B})$ is the essential spectrum of ${\mathscr B}$ {\rm(}see, e.g., {\rm\cite{S1971book})}.\vvv
\item[\rm (ii)] The operator ${\mathscr B}$ has a principal eigenvalue if and only if $s({\mathscr B})>\max_{x \in \boo} s(B(x))$, where $s({\mathscr B})$ is the spectral bound of ${\mathscr B}$.\vvv
\item[\rm (iii)] If $r({\cal J}(\lambda_0 I-{\cal B})^{-1})>1$ for some $\lambda_0>\max_{x\in\boo}s(B(x))$, then $s({\mathscr B})$ is the principal eigenvalue of ${\mathscr B}$.\vspace{-1mm}
\end{itemize}
	\end{lem}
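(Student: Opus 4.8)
The plan is to write ${\mathscr B}={\cal J}+{\cal B}$, use that ${\cal J}$ is compact while ${\cal B}$ is a matrix multiplication operator, and then transfer everything to the family of resolvent-type operators $T_\lambda:=(\lambda I-{\cal B})^{-1}{\cal J}$, on which a Krein--Rutman argument can be run. I would begin with (i). First, ${\cal J}$ is compact on $[C(\boo)]^n$: for $\|u\|_\infty\le1$ the functions $({\cal J}u)_i(x)=d_i\int_\oo J_i(x,y)u_i(y)\dy$ are uniformly bounded and, by uniform continuity of $J_i$ on $\boo\times\boo$, equicontinuous, so Arzelà--Ascoli applies. Since the essential spectrum is stable under compact perturbations, $\sigma_e({\mathscr B})=\sigma_e({\cal B})$. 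For the multiplication operator ${\cal B}$, continuity of $B$ and compactness of $\boo$ give that $\lambda I-{\cal B}$ is boundedly invertible precisely when $\lambda\notin\sigma(B(x))$ for every $x$, so $\sigma({\cal B})=\bigcup_{x\in\boo}\sigma(B(x))$; and no point of this set is an isolated eigenvalue of finite algebraic multiplicity — the spectral projection attached to an isolated point is again a multiplication operator and has infinite-dimensional range whenever nonzero — hence $\sigma_e({\cal B})=\sigma({\cal B})$, which proves (i).

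Next I would set up the resolvent reduction. Put $\eta_0:=\max_{\boo}s(B(x))$; by the Perron--Frobenius statement recalled before the lemma, $\eta_0=\sup\{\operatorname{Re}\lambda:\lambda\in\sigma_e({\mathscr B})\}$, so $s({\mathscr B})\ge\eta_0$. For real $\lambda>\eta_0$ the matrices $(\lambda I-B(x))^{-1}$ are entrywise nonnegative (cooperativity plus $\lambda>s(B(x))$) and continuous in $x$, so $(\lambda I-{\cal B})^{-1}$ is a bounded positive operator and
\[
\lambda I-{\mathscr B}=(\lambda I-{\cal B})\bigl(I-T_\lambda\bigr),\qquad T_\lambda=(\lambda I-{\cal B})^{-1}{\cal J},
\]
with $T_\lambda$ compact, positive, and irreducible on $[C(\boo)]^n$ — the last point combining the ``spreading'' of the kernels $J_i$ over the connected set $\oo$ (here $J_i(x,x)>0$ enters) with the coupling of all components supplied by $(\lambda I-B(\tilde x))^{-1}\gg0$ on a neighbourhood of $\tilde x$. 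Hence $\lambda\in\rho({\mathscr B})\iff 1\in\rho(T_\lambda)$. I would then record the monotonicity lemma that $g(\lambda):=r(T_\lambda)$ is continuous and strictly decreasing on $(\eta_0,\infty)$ with $g(\lambda)\to0$ as $\lambda\to\infty$, and the domination $|(\lambda I-B(x))^{-1}|\le(\operatorname{Re}\lambda\,I-B(x))^{-1}$ (from the Laplace representation $(\lambda I-B(x))^{-1}=\int_0^\infty e^{-\lambda t}e^{tB(x)}\dt$ and $e^{tB(x)}\ge0$), which yields $r(T_\lambda)\le g(\operatorname{Re}\lambda)$ for every $\lambda$ with $\operatorname{Re}\lambda>\eta_0$; in particular $\sigma({\mathscr B})\cap\{\operatorname{Re}\lambda>\eta_0\}$ is controlled by $g$ alone.

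With these tools, (ii) and (iii) come out together. If $g<1$ on all of $(\eta_0,\infty)$ then every $\lambda$ with $\operatorname{Re}\lambda>\eta_0$ lies in $\rho({\mathscr B})$, so $s({\mathscr B})=\eta_0$ and ${\mathscr B}$ has no principal eigenvalue (indeed, if $\mu$ were one with eigenfunction $\phi\gg0$, the eigenvalue equation gives $\sum_kb_{ik}(x)\phi_k(x)=\mu\phi_i(x)-d_i\int_\oo J_i(x,y)\phi_i(y)\dy<\mu\phi_i(x)$ for all $i,x$, i.e.\ $B(x)\phi(x)\ll\mu\phi(x)$, and the Collatz--Wielandt inequality forces $s(B(x))<\mu$ for every $x$, whence $\eta_0<\mu\le s({\mathscr B})$ — a contradiction). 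In the complementary case — which occurs whenever $s({\mathscr B})>\eta_0$, and also under the hypothesis of (iii) since $r({\cal J}(\lambda_0I-{\cal B})^{-1})=r(T_{\lambda_0})=g(\lambda_0)>1$ — continuity and strict monotonicity of $g$ produce a unique $\lambda^*>\eta_0$ with $g(\lambda^*)=1$ and $g<1$ to its right. Then $T_{\lambda^*}$ is compact, positive, irreducible with spectral radius $1$, so by Krein--Rutman $1$ is a simple eigenvalue with strongly positive eigenfunction $\phi\gg0$; unwinding the factorization gives $(\lambda^*I-{\mathscr B})\phi=0$. Since $r(T_\lambda)\le g(\operatorname{Re}\lambda)<1$ for $\operatorname{Re}\lambda>\lambda^*$, we get $\sigma({\mathscr B})\subseteq\{\operatorname{Re}\lambda\le\lambda^*\}$ together with $\lambda^*\in\sigma({\mathscr B})$, hence $s({\mathscr B})=\lambda^*$ is the principal eigenvalue; this establishes both (ii) and (iii). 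The step I expect to be the main obstacle is proving that $T_\lambda$ is irreducible on $[C(\boo)]^n$ starting only from the pointwise irreducibility of $B$ at the single point $\tilde x$ — one must combine the connectedness of $\oo$ and $J_i(x,x)>0$ to spread positivity within each component with the full positivity of $(\lambda I-B(\tilde x))^{-1}$ to couple the components, as in arguments of the type in \cite{BS17,SWZ23}; the other genuinely analytic point is the continuity and strict monotonicity of $\lambda\mapsto r(T_\lambda)$, whereas the purely spectral ingredients (compactness of ${\cal J}$, the resolvent factorization, the Perron--Frobenius/Collatz--Wielandt facts for $B(x)$) are routine.
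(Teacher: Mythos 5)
Your proposal is correct, and it reaches the same conclusions by a genuinely different route for parts (ii) and (iii). For (ii) ``$\Longleftarrow$'' the paper does not pass through the resolvent factorization at all: it shows some power of ${\cal J}+c_0I+{\cal B}$ is strongly positive and then invokes a generalized (Nussbaum-type) Krein--Rutman theorem, which delivers a strongly positive eigenvector precisely because $s({\mathscr B})+c_0>\eta_0+c_0=r_e({\mathscr B}+c_0I)$. For (ii) ``$\Longrightarrow$'' the paper picks a maximizer $\bar x$ of $s(B(\cdot))$, pairs the eigenvalue equation at $\bar x$ against a left Perron eigenvector $\psi$ of $B(\bar x)$, and uses $\int_\oo J_i(\bar x,y)\phi_i(y)\,\mathrm{d}y>0$ to force a strict inequality; your pointwise Collatz--Wielandt argument $B(x)\phi(x)\ll\mu\phi(x)\Rightarrow s(B(x))<\mu$ is a cleaner equivalent. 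For (iii), the paper only extracts from $r({\cal J}(\lambda_1I-{\cal B})^{-1})=1$ a \emph{positive} (not a priori strongly positive) eigenvector, concludes $s({\mathscr B})\ge\lambda_1>\eta_0$, and then falls back on (ii); you instead apply classical Krein--Rutman directly to the compact positive operator $T_{\lambda^*}$ and read off the principal eigenfunction in one shot. The two proofs trade the same hard ingredient back and forth: the paper must show strong positivity of $({\cal J}+c_0I+{\cal B})^m$ from the single-point irreducibility of $B(\tilde x)$ together with $J_i(x,x)>0$ and connectedness of $\oo$, while you must show $T_\lambda$ is irreducible on $[C(\boo)]^n$ from exactly the same data; you flag this as the main obstacle and sketch it correctly, noting that irreducibility of $B(\tilde x)$ plus continuity gives $(\lambda I-B(x))^{-1}\gg0$ on an open neighborhood of $\tilde x$, which the kernels $J_i$ then propagate. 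One structural advantage of your route is that it is more self-contained (no appeal to generalized Krein--Rutman), at the cost of needing the Laplace-transform domination $r(T_\lambda)\le g(\operatorname{Re}\lambda)$ to control the complex half-plane $\{\operatorname{Re}\lambda>\eta_0\}$; the paper circumvents this because it already works with $s({\mathscr B})$ and $r_e$ directly. Two minor cosmetic points: strict monotonicity of $g$ is not needed (non-increase plus continuity plus $g(\lambda)\to0$ already pins down $\lambda^*=\sup\{\lambda:g(\lambda)\ge1\}$ with $g(\lambda^*)=1$ and $g<1$ to its right), and the identity $r({\cal J}(\lambda I-{\cal B})^{-1})=r((\lambda I-{\cal B})^{-1}{\cal J})$ is worth stating explicitly since the paper uses the opposite order from you; both are standard and neither is a gap.
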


\begin{proof} This Lemma essentially can be obtained from \cite{BS17}, \cite{Zhang24} and \cite{LCWdcds17}. For reader's convenience, we provide the outline of the proof.
		
(i)  Thanks to \cite[Theorem 7.27]{S1971book} and \cite[Proposition 2.7]{LZZ2019JDE}, $\sigma_e({\mathscr B})=\sigma({\mathscr B})=\cup_{x \in \boo} \sigma(B(x))$.
		
(ii) In view of {\bf(B)}, there exist $c_0$ large enough and some $m_0$ such that $B(x) +c_0 I$ is positive and $(B(\tilde{x})+c_0 I)^m$ is strongly positive for $m\geq m_0$ (see, e.g., \cite[(8.3.5)]{M2000Book} and \cite[Lemma 2.7]{Zhang24}). Noticing that ${\cal J}+c_0 I+{\cal B}$ is positive, and
 \[({\cal J}+c_0 I+{\cal B})^m\varphi\geq{\cal J}(c_0 I+{\cal B})^{m-1}\varphi, \;\;\forall\; \varphi \geq 0,\]
and ${\cal J} (c_0 I+{\cal B})^{m-1}$ is strongly positive. We have that $({\cal J}+c_0 I+{\cal B})^m$ is strongly positive for all $m\geq m_0+1$.
		
``$\Longleftarrow$'' If $s({\mathscr B})>\max_{x \in \boo} s(B(x))=\eta$, then the operator ${\mathscr B}$ has a principal eigenvalue due to a variation of the generalized Krein-Rutman theorem (see, e.g., \cite[Corollary 2.2]{N1981FPT} and \cite[Lemma 2.4]{Zhang24}).
		
``$\Longrightarrow$'' If $s({\mathscr B})$ is the principal eigenvalue of ${\mathscr B}$  corresponding to the principal eigenfunction $\phi$. Choose $\bar{x} \in \overline{\Omega}_0$ such that $s(B(\bar{x}))=\eta$. Thus,
 \[d_i\dd\int_\oo J_i(\bar{x},y)\phi_i(y)\dy+\sum_{k=1}^n b_{ik}(\bar{x})\phi_k(\bar{x})= s({\mathscr B})\phi_i(\bar{x}),\;\; i\in\sss.\]
Let $\psi$ be left positive eigenvector of $B(\bar{x})$ corresponding to $s(B(\bar{x}))$. Multiplying the above equation by $\psi_i$ and summing them together, we have
 \[\sum_{i=1}^{n}\psi_i d_i\dd\int_\oo J_i(\bar{x},y)\phi_i(y)\dy
 +\sum_{i=1}^{n}\sum_{k=1}^n \psi_i b_{ik}(\bar{x})\phi_k(\bar{x})= s({\mathscr B})\sum_{i=1}^{n}\psi_i \phi_i(\bar{x}),\]
and hence,
  \[s(B(\bar{x}))\sum_{i=1}^{n}\psi_i\phi_i(\bar{x})< s({\mathscr B})\sum_{i=1}^{n}\psi_i\phi_i(\bar{x}).\]
This implies that $s({\mathscr B})>s(B(\bar{x}))=\eta$.
		
(iii) Notice that $r({\cal J}(\lambda I-{\cal B})^{-1})$ is non-increasing and continuous with respect to $\lambda\in(\eta, +\infty)$. We claim that
  \bes
  \lim\limits_{\lambda\to+\infty}r({\cal J}(\lambda I-{\cal B})^{-1})<1.
   \lbl{A.1}\ees
Under this claim, we can find a $\lambda_1>\eta$ such that $r({\cal J}(\lambda_1 I-{\cal B})^{-1})=1$. By the Krein-Rutman theorem (see, e.g., \cite[Theorem 19.3]{Dei85}), there exists positive $\psi$ such that ${\cal J}(\lambda_1 I-{\cal B})^{-1}\psi=\psi$. Letting $(\lambda_1 I-{\cal B})^{-1}\psi=\phi$ we then have ${\cal J}\phi+{\cal B}\phi=\lambda_1 \phi$ and $s({\mathscr B})\geq\lambda_1>\eta$. Thanks to (ii), $ s({\mathscr B})$ is the principal eigenvalue of ${\mathscr B}$.
		
We finally prove \qq{A.1}. Suppose that $\lim\limits_{\lambda\to+\infty}r({\cal J}(\lambda I-{\cal B})^{-1})\geq 1$. Write $\gamma_0=r({\cal J}(\lambda_0 I-{\cal B})^{-1})>1$, and set
 \[\bar{b}=\max_{i,\,k\in\sss}\,\max_{x\in\boo}|b_{ik}(x)|,\quad \bar{d}=\max_{i\in\sss} d_i,\quad\bar{J}
   =\max_{i\in\sss}\max_{(x,y)\in\boo\times\boo} J_i(x,y).\]
Choose $\lambda_2 >\max\big\{\lambda_0,\,n\gamma_0\bar{b}+\bar{J}\bar{d}|\oo|\big\}$, and write $\gamma_2=r({\cal J}(\lambda_2 I-{\cal B})^{-1})$. Clearly, $\gamma_0 \geq \gamma_2\geq 1$. By the Krein-Rutman theorem, there exists positive $\vp$ such that ${\cal J}(\lambda_2 I-{\cal B})^{-1}\vp=\gamma_2 \vp$. Let $(\lambda_2 I-{\cal B})^{-1}\vp=\phi$. We have ${\cal J}\phi=\gamma_2(\lambda_2 I-{\cal B})\phi$, and hence,
 \[d_i\dd\int_\oo J_i(x,y)\phi_i(y)\dy+\gamma_2\sum_{k=1}^n b_{ik}(x)\phi_k(x)=\gamma_2\lambda_2 \phi_i(x),\;\; x \in \boo \;\; i\in\sss.\]
Integrating the above equation and summing them together, we have
 \bess
 \lambda_2\sum_{i=1}^n\int_\oo \phi_i(x)&\leq& \gamma_2\lambda_2\sum_{i=1}^n   \int_\oo \phi_i(x)\dx\\
&=&\sum_{i=1}^nd_i\int_\oo \int_\oo J_i(x,y)\phi_i(y)\dy\dx+ \gamma_2\sum_{i=1}^n\sum_{k=1}^n\int_\oo b_{ik}(x)\phi_k(x)\dx\\
 &\leq&\bar{J}\bar{d}|\Omega|\sum_{i=1}^n\int_\oo \phi_i(x)\dx
+n\gamma_0\bar{b}\sum_{i=1}^n\int_\oo\phi_i(x)\dx,
 \eess
which is a contradiction. Thus, \qq{A.1} holds and the proof is complete.
	\end{proof}\vspace{-2mm}

We remark that if \( B(\bar{x}) \) is irreducible, (ii) has been proved in \cite{BS17}, and our proof holds regardless of whether \(B(\bar{x})\) is irreducible or not; (iii) is a generalization of \cite[Theorem 2.2]{B1988MZ}, where the irreducibility condition is weakened to a point.\vspace{-2mm}

\begin{proof}[Proof of Lemma \ref{le2.1}] Choose $\bar{x} \in \overline{\Omega}_0$ such that $s(B(\bar{x}))=\eta:=\max_{x\in\ol\oo} s(B(x))$.

In the case where \(B(\bar{x})\) is irreducible, the desired conclusion can be derived by modifying the arguments to those in \cite[Proposition 3.4 and Lemma 4.1]{BS17}. For reader's convenience, we provide the details.
		
By the continuity of $J_i$, there exist $r_0>0$ and $c_0>0$ such that
  \[J_i(x,y)>c_0\;\;\;\mbox{for\, all}\;\;x,y\in\boo\;\;\mbox{with}\;\;|x-y|<r_0\;\;\mbox{and}\;\;i\in\sss.\]
According to the Perron-Frobenius theorem, the {\rrr matrix-valued} function $B(x)$ admits an eigenvalue $s(B(x))$ corresponding to an eigenfunction $w(x)$ which is non-negative and continuous in $\boo$, and $\max_{i\in\sss}\max_{\boo}w_i=1$. Since \( B(\bar{x})\) is irreducible, $w(\bar{x})$ is strongly positive. By the continuity, there exists $\sigma>0$ such that $B(\bar{x},\sigma)\subset\Omega$ and $w$ is strongly positive in $\overline{B(\bar{x},\sigma)}$. Let
 \[c_1= \min_{i\in\sss}\min_{\overline{B(\bar{x},\sigma)}}w_i.\]
In view of $\frac 1{\eta-s(B(x))}\not\in L^1(\Omega_0)$, we can choose $0<\delta<\min\{\sigma,\,r_0/3\}$ and $\beta_0>\eta$ such that
 \[B(\bar{x},2\delta)\subset\Omega,\;\;\;\int_{B(\bar{x},\,\delta)}
 \frac 1{\beta_0-s(B(x))}\dx \geq \frac 2{c_0c_1}.\]
Let $p(x)$ be a continuous function in $\boo$ with $\max_{\boo}p=1$ and
 \[p(x)=\begin{cases}
	1, &x \in B(\bar{x},\delta),\\
	0, &x \not \in B(\bar{x},2\delta).\end{cases}\]
Write $\hat{w}(x)=w(x)p(x)$, $x \in \boo$. It then follows that
  \bess
  \int_{\Omega}\frac{J_{i}(x,y)}{\beta_0-s(B(y))}\hat{w}_i(y)\dy&\geq& 0=2\hat{w}(x), \;\; \forall\, x \in \boo \setminus B(\bar{x},2\delta), \; i\in\sss,\\[1mm]
\int_{\Omega} \frac{J_{i}(x,y)}{\beta_0-s(B(y))}\hat{w}_i(y)\dy
&\geq& \int_{B(\bar{x},\delta)}\frac{J_{i}(x,y)}{\beta_0-s(B(y))}\hat{w}_i(y)\dy\\
&=&\int_{B(\bar{x},\delta)}\frac{J_{i}(x,y)}{\beta_0-s(B(y))}w_i(y)\dy\\[1mm]
&\geq& 2\hat{w}_i(x),\;\; \forall\, x\in B(\bar{x},2\delta), \; i\in\sss.
 \eess
This implies ${\cal J}(\beta_0 I-{\cal B})^{-1}\hat{w}\geq 2\hat{w}$, and the desired conclusion can be derived by Lemma \ref{lem:LJG}.
		
In the case where $B(\bar{x})$ is reducible, there exists a block $\hat{B}(\bar{x})$ of $B(\bar{x})$ such that $\hat{B}(\bar{x})$ is irreducible and $s(\hat{B}(\bar{x}))=s(B(\bar{x}))=\eta$. More precisely, there is an index set $\Sigma\subset\sss$ such that $\hat{B}(x)=(b_{ik}(x))_{i,k\in\Sigma}$, and $\max_{\ol\oo} s(\hat{B}(x))=\eta$. So the following eigenvalue problem
	\[d_i\dd\int_\oo J_i(x,y)\phi_i(y)\dy+\sum_{k\in\Sigma}b_{ik}(x)\phi_k(x)=\lm \phi_i,\; x\in\oo,\;\; i \in\Sigma\]
	has the principal eigenvalue $\hat{\lambda}$. It is not hard to verify that $s({\mathscr B})\geq\hat{\lambda}>\eta$. According to Lemma \ref{lem:LJG}, $s({\mathscr B})$ is the principal eigenvalue of ${\mathscr B}$.
	\end{proof}\vspace{-1mm}


\begin{thebibliography}{10}\setlength{\itemsep}
{0mm}\linespread{1.2}\selectfont

\bibitem{BS17}
X. Bao and W. Shen, {\it Criteria for the existence of principal
	eigenvalues of time periodic cooperative linear systems with nonlocal
	dispersal}, Proc. Amer. Math. Soc., 145 (2017),  2881-2894.
	
\bibitem{BZ07} P. Bates and G. Zhao, {\it Existence, uniqueness and stability of the stationary solution to a nonlocal evolution equation arising in population dispersal}. J. Math. Anal. Appl. 332 (2007), 428-440.

\bibitem{B-JFA16}  H. Berestycki, J. Coville and H.H. Vo,
{\it On the definition and the properties of the principal eigenvalue of some nonlocal operators}. J. Funct. Anal. 271 (2016), 2701-2751.

\bibitem{Bo97}S. Bonhoeffer, R. M. May, G. M. Shaw and M. A. Nowak, {\it Virus dynamics and drug therapy}. Proc. Natl. Acad. Sci. USA 94 (1997), 6971-6976.

\bibitem{B1988MZ} R.~B{\"u}rger, {\it Perturbations of positive semigroups and applications to population genetics}, Math. Z., 197 (1988),  259-272.

 \bibitem{CM81}V. Capasso and L. Maddalena, {\it Convergence to equilibrium states for a reaction-diffusion system modelling the spatial spread of a class of bacterial and viral diseases}. J. Math. Biol. 13 (1981/82), 173-184.

\bibitem{Cov} J. Coville, {\it On a simple criterion for the existence of a principal eigenfunction of some nonlocal operators}.
J. Differential Equations 249 (2010), 2921-2953.

\bibitem{Dei85} K. Deimling, {\it Nonlinear Functional Analysis}, Springer-Verlag, Berlin, Heidelberg, 1985.

\bibitem{EPS72} D. Edmunds, A. Potter, and C.Stuart, {\it Non-compact positive
operators}, Proc. R. Soc. Lond. Ser. A Math. Phys. Eng. Sci., 328 (1972), pp.~67--81.

\bibitem{FLRX24} Y.-X. Feng, W.-T. Li, S.~Ruan, and M.-Z. Xin, {\it Principal spectral theory of time-periodic nonlocal dispersal cooperative systems and
	applications}, SIAM J. Math. Anal., 56 (2024), 4040-4083.

\bibitem{HJL2016SIADS}{\sc Q. Huang, Y. Jin, and M. A. Lewis}, {\it {$R_0$} analysis of a benthic-drift model for a stream population}, SIAM J. Appl. Dyn. Syst., 15
(2016),  278-321.
	
\bibitem{LCWdcds17} F. Li, J. Coville and X.F. Wang, {\it On eigenvalue problems arising from nonlocal diffusion models}. Discrete Contin. Dyn. Syst. 37(2) (2017), 879-903.

\bibitem{LZZ2017SIMA} X. Liang, L. Zhang and X.-Q. Zhao,
{\it The principal eigenvalue for degenerate periodic reaction-diffusion systems,} SIAM J. Math. Anal., 49 (2017), 3603-3636.

\bibitem{LZZ2019JDE}
 X.~Liang, L.~Zhang, and X.-Q. Zhao, {\it The principal eigenvalue for
	periodic nonlocal dispersal systems with time delay}, J. Differential
Equations, 266 (2019),  2100-2124.

\bibitem{KangR22}
 H.~Kang and S.~Ruan, {\it Principal spectral theory and asynchronous
	exponential growth for age-structured models with nonlocal diffusion of
	{N}eumann type}, Math. Ann., 384 (2022),  575-623.
	
\bibitem{MWW18}P. Magal, G.F. Webb and Y.X. Wu, {\it On a vector-host epidemic model with spatial structure}. Nonlinearity 31 (2018), 5589-5614.

\bibitem{M2000Book}
C.~D. Meyer, {\it Matrix analysis and applied linear algebra}, vol.~71,
SIAM, Philadelphia, 2000.

\bibitem{Nowak-B96} M. A. Nowak and C. R. M. Bangham, {\it Population dynamics of immune responses to persistent viruses}. Science 272 (1996), 74-79.

\bibitem{N1981FPT}
R. Nussbaum, {\it Eigenvectors of nonlinear positive operators and the
	linear {Krein-Rutman} theorem}, Fixed Point Theory, 886 (1981), 309-330.
	
\bibitem{PLNL2005TPB} E. Pachepsky, F. Lutscher, R. M. Nisbet and M. A. Lewis,
{\it Persistence, spread and the drift paradox},  Theoretical  Population Biology, 67 (2005) 61-73.

\bibitem{S1971book}
 M.~Schechter, {\it Principles of Functional Analysis}, vol.~2, Academic,
New York, 1971.


\bibitem{SX15} W.X. Shen and X.X. Xie, {\it On principal spectrum points/principal eigenvalues of nonlocal dispersal
operators and applications}, Discrete Contin. Dyn. Syst., 35(4) (2015), 1665-1696.

\bibitem{SZ10JDE}
 W.X. Shen and A.J. Zhang, {\it Spreading speeds for monostable equations with
	nonlocal dispersal in space periodic habitats}, J. Differential Equations, 249 (2010), 747-795.

\bibitem{SZ2012PAMS} W.~Shen and A.~Zhang, {\it Stationary solutions and spreading speeds of nonlocal monostable equations in space periodic habitats}, Proc. Amer. Math. Soc., 140 (2012), 1681-1696.


\bibitem{SWZ23} Y.-H. Su, X.F. Wang and T. Zhang, {\it Principal spectral theory and variational characterizations for cooperative systems with nonlocal and coupled diffusion}. J. Differential Equations 369 (2023), 94-114.

\bibitem{SLLW2023JMPA} Y.-H. Su, W.-T. Li, Y.~Lou, and X.~Wang, {\it Principal spectral theory for nonlocal systems and applications to stem cell regeneration models}, J.
Math. Pures Appl. (9), 176 (2023), 226-281.

\bibitem{Thieme1998DCDS} H.R. Thieme, {\it Remarks on resolvent positive operators and their perturbation}, Discrete Contin. Dynam. Systems, 4 (1998), 73-90.

\bibitem{Wang24}M.X. Wang, {\it Note on a vector-host epidemic model with spatial structure}. 2024. arXiv:2406.11407.

\bibitem{WZhang24} M.X. Wang and L. Zhang, {\it On a reaction-diffusion virus model with general boundary conditions in heterogeneous environments}. 2024. arXiv:

\bibitem{YLR2019JDE} F.-Y. Yang, W.-T. Li, and S.~Ruan, {\it Dynamics of a nonlocal dispersal 	sis epidemic model with neumann boundary conditions}, J. Differential
Equations, 267 (2019),  2011-2051.

\bibitem{Zhang24} L. Zhang, {\it Principal spectral theory and asymptotic
behavior of the spectral bound for partially degenerate nonlocal dispersal systems}, Adv. Nonlinear Stud., 24 (4)2024, 1012-1041.

\end{thebibliography}
\end{document}